\numberwithin{equation}{section}
\theoremstyle{plain}
\newtheorem{thm}{Theorem}[section]
\newtheorem{prop}[thm]{Proposition}
\newtheorem{defi}[thm]{Definition}
\newtheorem{lem}[thm]{Lemma}
\newtheorem{cor}[thm]{Corollary}
\newtheorem{conv}[thm]{Convention}
\newtheorem{eg}[thm]{{Example}}
\theoremstyle{remark}
\newtheorem{rema}[thm]{Remark}
\newcommand{\ad}{{\mbox{\upshape{ad}}}}
\newcommand{\Aut}{\mathrm{Aut}}
\newcommand{\barU}{\overline{\phantom{u}}^U}
\newcommand{\barB}{\overline{\phantom{u}}^B}
\newcommand{\bc}{{\mathbf{c}}}
\newcommand{\bs}{{\mathbf{s}}}
\newcommand{\Bcs}{B_{\bc,\bs}}
\newcommand{\BK}{\mathrm{BK}}
\newcommand{\C}{{\mathbb C}}
\newcommand{\chU}{\check{U}}
\newcommand{\N}{{\mathbb N}}
\newcommand{\cA}{{\mathcal A}}
\newcommand{\cB}{{\mathcal B}}
\newcommand{\cC}{{\mathcal C}}
\newcommand{\cF}{{\mathcal F}}
\newcommand{\cJ}{{\mathcal J}}
\newcommand{\cK}{{\mathcal K}}
\newcommand{\cM}{{\mathcal M}}
\newcommand{\cO}{{\mathcal O}}
\newcommand{\cS}{{\mathcal S}}
\newcommand{\cV}{{\mathcal V}}
\newcommand{\coid}{B_{{\mathbf c},{\mathbf s}}}
\newcommand{\End}{\mbox{End}}
\newcommand{\field}{{\mathbb K}}
\newcommand{\For}{{\mathcal{F}or}}
\newcommand{\flip}{{\mathrm{flip}}}
\newcommand{\gfrak}{{\mathfrak g}}
\newcommand{\hfrak}{{\mathfrak h}}
\newcommand{\hght}{\mathrm{ht}}
\newcommand{\id}{{\mathrm{id}}}
\newcommand{\inv}{\mathrm{inv}}
\newcommand{\kfrak}{{\mathfrak k}}
\newcommand{\Kmat}{\mathscr{K}}
\newcommand{\kow}{{\varDelta}}
\newcommand{\lact}{{\triangleright}}
\newcommand{\ltil}{\tilde{l}}
\newcommand{\alphatil}{\tilde{\alpha}}
\newcommand{\Mcs}{\cM_{\bc,\bs}}
\newcommand{\lambdatil}{\tilde{\lambda}}
\newcommand{\lot}{\mbox{\upshape lot}}
\newcommand{\ophan}{\overline{\phantom{r}}}
\newcommand{\op}{\mathrm{op}}
\newcommand{\Oint}{\cO_{\mathrm{int}}}
\newcommand{\ot}{\otimes}
\newcommand{\rib}{v}
\newcommand{\roots}{\Pi}
\newcommand{\rank}{\mathrm{rank}}
\newcommand{\Rmat}{{\mathscr{R}}}
\newcommand{\scrC}{\mathscr{C}}
\newcommand{\scrK}{\mathscr{K}}
\newcommand{\slfrak}{{\mathfrak{sl}}}
\newcommand{\sB}{\mathscr{B}}
\newcommand{\sU}{{\mathscr{U}}}
\newcommand{\Uq}{U}
\newcommand{\uqg}{{U_q(\mathfrak{g})}}
\newcommand{\uq}{U}
\newcommand{\uqgX}{U_q(\mathfrak{g}_X)}
\newcommand{\UnullTh}{\check{U}^0_\Theta}
\newcommand{\tr}{\mathrm{tr}}
\newcommand{\Uc}{{\check{U}}}
\newcommand{\vep}{\varepsilon}
\newcommand{\Xfrak}{\mathfrak{X}}
\newcommand{\Z}{{\mathbb Z}}
\title{Braided module categories via quantum symmetric pairs}
\thanks{The early stages of this work were supported by EPSRC grant EP/K025384/1}
\author{Stefan Kolb}
\address{School of Mathematics, Statistics and Physics, Newcastle University, Newcastle upon Tyne NE1 7RU, UK}
\email{stefan.kolb@newcastle.ac.uk}
\subjclass[2010]{17B37; 57M25; 81R50}
\keywords{Quantum groups, quantum symmetric pairs, ribbon category, braided module category, universal K-matrix}
\begin{document}

\begin{abstract}
Let $\gfrak$ be a finite dimensional complex semisimple Lie algebra. The finite dimensional representations of the quantized enveloping algebra $U_q({\mathfrak g})$ form a braided monoidal category $\Oint$. We show that the category of finite dimensional representations of a quantum symmetric pair coideal subalgebra $\Bcs$ of $U_q({\mathfrak g})$ is a braided module category over an equivariantization $\Oint^{\langle \sigma \rangle}$ of $\Oint$. The braiding for $\Bcs$ is realized by a universal K-matrix which lies in a completion of $\Bcs\ot \uqg$. We apply these results to describe a distinguished basis of the center of $\Bcs$. 
\end{abstract}

\maketitle

\section{Introduction}
Quantized enveloping algebras $\uqg$, introduced by V.~Drinfeld and M.~Jimbo in the mid 80s, provide a conceptual framework for large families of solutions of the quantum Yang-Baxter equation \cite{inp-Drinfeld1}, \cite{a-Jimbo1}. These solutions are realized by a universal R-matrix, which turns suitable categories of $\uqg$-modules into braided monoidal categories, or more specifically into ribbon categories. N.~Reshetikhin and V.~Turaev developed a graphical calculus for ribbon categories \cite{a-RT90}, which is the starting point for a comprehensive theory of quantum invariants of knots and 3-manifolds \cite{b-Turaev}.
In this paper $\gfrak$ denotes a finite dimensional complex semisimple Lie algebra and the quantized enveloping algebra $\uqg$ is defined over the field $\field(q^{1/d})$ where $\field$ is a field of characteristic zero and $q^{1/d}$ denotes an indeterminate. In this case the category $\Oint$ of finite dimensional $\uqg$-modules of type 1 is a ribbon category and the Reshetikhin-Turaev calculus can be applied. 

Let $\theta:\gfrak\rightarrow \gfrak$ denote an involutive Lie algebra automorphism and let $\kfrak=\{x\in \gfrak\,|\,\theta(x)=x\}$ denote the corresponding fixed Lie subalgebra. A uniform theory of quantum symmetric pairs was formulated by G.~Letzter in \cite{a-Letzter99a}. It provides quantum group analogs $\Bcs$ of the universal enveloping algebra $U(\kfrak)$. Crucially, $\Bcs$ is a right coideal subalgebra of $\uqg$, in other words, the coproduct $\kow$ of $\uqg$ satisfies the relation
\begin{align*}
  \kow(\Bcs)\subseteq \Bcs\ot \uqg.
\end{align*} 
This means in particular that the category $\Mcs$ of finite dimensional $\Bcs$-modules is a module category over $\Oint$. Recall that the construction of the quantum symmetric pair coideal subalgebra $\Bcs$ involves a Satake diagram $(X,\tau)$ where $X$ denotes a subset of nodes of the Dynkin diagram and $\tau$ is a diagram automorphism, see \cite{a-Letzter99a}, \cite{a-Kolb14}. Moreover, let $\tau_0$ denote the diagram automorphism of $\gfrak$ corresponding to the longest element in the Weyl group. Consider the involutive diagram automorphism $\sigma=\tau\tau_0$. The group $\langle \sigma\rangle$ generated by $\sigma$ is either trivial or isomorphic to $\Z_2$. The group $\langle \sigma \rangle$ acts on $\Oint$ and the equivariantization $\Oint^{\langle \sigma \rangle}$ retains the structure of a ribbon category. Moreover, $\Mcs$ is also a module category over $\Oint^{\langle \sigma \rangle}$. 

In \cite{a-Brochier13} A.~Brochier introduced the notion of a braided module category over a braided monoidal category. He showed that Reshetikhin and Turaev's graphical calculus can be extended to braided module categories over ribbon categories. The first main result of the present paper is that quantum symmetric pairs provide a large class of examples of braided module categories.

\medskip

\noindent{\bf Theorem I} (Corollary \ref{cor:B-mod-braided}) \textit{The category $\Mcs$ is a braided module category over $\Oint^{\langle \sigma \rangle}$.}

\medskip

The notion of a braided module category has several precursors in the literature. In the late 90s T.~tom Dieck and R.~H{\"a}ring-Oldenburg introduced the notion of a braided monoidal category with a cylinder braiding \cite{a-tD98}, \cite{a-tDHO98}. If a braided module category consists of representations of a coideal subalgebra of a quasitriangular bialgebra, then it gives rise to a braided monoidal category with a cylinder braiding. B.~Enriquez gave an earlier version of the definition of a braided module category in \cite[Section 4.3]{a-Enriquez07} which seems to have been the inspiration for \cite{a-Brochier13}. Structures similar to braided module categories also appeared in the physics literature, see for example \cite[Section 3.2]{a-Baseilhac05b}.

The proof of Theorem I builds on our previous work \cite{a-BalaKolb15p} which established that quantum symmetric pairs provide examples of braided monoidal categories with a cylinder braiding as defined in \cite{a-tD98}, \cite{a-tDHO98}. The cylinder braiding in \cite{a-BalaKolb15p} is realized by a universal K-matrix $\cK_{\BK}$ (denoted by $\cK$ in \cite{a-BalaKolb15p}) which lies in a completion $\sU$ of $\uqg$. The element $\cK_\BK$ satisfies the commutation relation
\begin{align}\label{eq:KBKb}
  \sigma(b) \cK_\BK = \cK_{\BK} b \qquad \mbox{for all $b\in \Bcs$}
\end{align}
and provides solutions of a $\sigma$-twisted version of the reflection equation in any representation $V\in Ob(\Oint)$.
The construction of the universal K-matrix $\cK_\BK$ in \cite{a-BalaKolb15p} followed the first steps in a recent program of canonical bases for quantum symmetric pairs proposed in \cite{a-BaoWang13p}, see also \cite{a-BaoWang16p}.

In the present paper, to avoid the twist by $\sigma$ in \eqref{eq:KBKb} and in the reflection equation, we consider the semidirect product $U_\sigma=\uqg\rtimes \field(q^{1/d})\langle \sigma \rangle$. Then $\Oint^{\langle \sigma \rangle}$ is nothing but the category of finite dimensional $U_\sigma$-modules which belong to $\Oint$ after restriction to $\uqg$. We consider the element $\cK=\cK_\BK\sigma$ which lies in a completion of $U_\sigma$. The element $\cK$ commutes with all $b\in \Bcs$ and provides solutions of the usual reflection equation in any representation $V\in Ob(\Oint^{\langle\sigma \rangle})$. The structure of a braided module category on $\Mcs$ is realized by an element $\Kmat$ in a completion $\sB^{(2)}$ of $\Bcs\ot U_\sigma$. The relation between $\Kmat$ and $\cK$ is given by 
\begin{align}\label{eq:Kmat-def}
  \Kmat=\Rmat_{21}(1\ot \cK) \Rmat
\end{align}
where $\Rmat$ denotes the universal R-matrix for $\uqg$. The crucial new result is that the right hand side of \eqref{eq:Kmat-def} does indeed belong to $\sB^{(2)}$ and hence acts naturally on $M\ot V$ for all $M\in Ob(\Mcs)$ and $V\in Ob(\Oint^{\langle \sigma \rangle})$. The main technical ingredient needed to prove that $\Kmat\in \sB^{(2)}$ is a generalization of results proved for quantum symmetric pairs of type AIII/IV  with $X=\emptyset$ in \cite[Section 3]{a-BaoWang13p}. We call $\Kmat$ the 2-tensor universal K-matrix for $\Bcs$.

In the second part of the paper we apply the 2-tensor universal K-matrix $\Kmat$ to revisit the investigation of the center $Z(\Bcs)$ of $\Bcs$ from \cite{a-KL08}. In \cite{a-KL08} we showed that $Z(\Bcs)$ has a direct sum decomposition into distinguished one-dimensional subspaces labeled by certain dominant integral weights. We showed, moreover, that $Z(\Bcs)$ is a polynomial ring in $\rank(\kfrak)$ variables. The proofs in \cite{a-KL08} are technical. They rely on the interplay between several filtrations of $\uqg$, the explicit form of the generators of $\Bcs$, and the structure of the locally finite part of $\uqg$ described in \cite{a-JoLet2}. At the end of his $\mathrm{MathSciNet}^{\copyright}$ review of \cite{a-KL08}, A.~Joseph asked if one can make effective use of the Hopf dual $\cA$ of $\uqg$ in the construction of the center $Z(\Bcs)$. Such a construction is well known for the center of $\uqg$, see for example \cite[Proposition 3.3]{a-Drin90-almost}, \cite{a-Baumann98} and references therein. Guided by the graphical calculus for braided module categories, we use the 2-tensor universal K-matrix $\Kmat$ to answer Joseph's question, and to give new proofs of all the results in \cite{a-KL08}. More explicitly, let $\cA_\sigma$ denote the Hopf dual of $U_\sigma$ and let $\cA_\sigma^\inv\subset \cA_\sigma$ denote the subalgebra of elements invariant under the dual of the left adjoint action of $U_\sigma$ on itself.
Let $P^+$ denote the set of dominant integral weights for $\gfrak$ and for each $\lambda\in P^+$ let $V(\lambda)$ denote the corresponding simple $\uqg$-module.
If $\lambda\in P^+$ satisfies $\sigma(\lambda)=\lambda$ then $V(\lambda)$ has a uniquely determined $U_\sigma$-module structure such that any highest weight vector of $V(\lambda)$ is fixed by $\sigma$. We denote the resulting $U_\sigma$-module by $V(\lambda)_+$ and we let $\tr_{V(\lambda)_+,\sigma,q}\in \cA^\inv_\sigma$ be the corresponding quantum trace. We obtain the following description of the center $Z(\Bcs)$. Let $v\in \sU$ denote the ribbon element for $\uqg$.

\medskip
  \noindent {\bf Theorem II} (Theorem \ref{thm:Z(B)-basis}) \textit{The map
  \begin{align}\label{eq:ltilvK}
    \ltil_{(1\ot \rib) \cdot \Kmat}:\cA^\inv_{\sigma} \rightarrow Z(\Bcs)
  \end{align}
  given by pairing elements in $\cA^\inv_\sigma$ with the second tensor factor of $(1\ot \rib)\cdot \Kmat$ is a surjective algebra homomorphism. Moreover, the set
  \begin{align}\label{eq:vKbasis}
    \{\ltil_{(1\ot \rib) \cdot \Kmat}(\tr_{V(\lambda)_+,\sigma,q})\,|\,\lambda\in P^+, \sigma(\lambda)=\lambda\}
  \end{align}
  is a basis of $Z(\Bcs)$.}

\medskip

There is some freedom in the choice of axioms for a braided module category. The appearance of the ribbon element $\rib$ in the map \eqref{eq:ltilvK} is solely due to the particular choice made in \cite{a-Brochier13}, see Remark \ref{rem:differentK}.

The kernel of the map \eqref{eq:ltilvK} is explicitly determined in Corollary \ref{cor:ZBZ}. Define $P_{Z(\Bcs)}=\{\lambda\in P^+\,|\,\sigma(\lambda)=\lambda\}$ and set $b_\lambda=\ltil_{(1\ot \rib) \cdot \Kmat}(\tr_{V(\lambda)_+,\sigma,q})$ for all $\lambda\in P_{Z(\Bcs)}$. It turns out that the elements of the distinguished basis \eqref{eq:vKbasis} satisfy the relation
\begin{align}\label{eq:LR}
  b_\lambda b_\mu = \sum_{\nu\in P_{Z(\Bcs)}} \eta_{\lambda,\mu}^\nu b_\nu \qquad \mbox{for all $\lambda,\mu\in P_{Z(\Bcs)}$}
\end{align}
where the coefficients $\eta_{\lambda,\mu}^\nu\in \N_0$ can be read off from the tensor product decomposition of $V(\lambda)_+\ot V(\mu)_+$ in $\Oint^{\langle \sigma \rangle}$, see Corollary \ref{cor:ZBZ}. If $\sigma=\id$ then $\eta_{\lambda,\mu}^\nu$ are nothing but the Littlewood-Richardson coefficients for the semisimple Lie algebra $\gfrak$. A special instance of relation \eqref{eq:LR} for a distinguished basis of $Z(\Bcs)$ was previously observed in \cite[Theorem 5.13]{a-KolbStok09}.
In the final Section \ref{sec:tqt} we observe that $Z(\Bcs)$ can also be identified with a subalgebra $\cA^{\inv,\sigma}\subset\cA$ consisting of all elements which are invariant under a $\sigma$-twisted adjoint action.

The main body of the paper is organized in three sections. In Section \ref{sec:BMC} we recall the notion of a braided module category and relate it to the concepts in \cite{a-tD98}, \cite{a-tDHO98}, \cite{a-Enriquez07}, as well as our previous work \cite{a-BalaKolb15p}. In Section \ref{sec:univ-K-qsp} we construct the universal K-matrix $\Kmat$ and show that it belongs to $\sB^{(2)}$. This allows us to prove Theorem I. In Section \ref{sec:center} we investigate the center $Z(\Bcs)$ from scratch. Compared with \cite{a-KL08} we provide a much streamlined presentation and eventually prove Theorem II.

\medskip

\noindent{\bf Acknowledgments.} The author is grateful to Pascal Baseilhac who insisted in November 2015 that there should be a 2-tensor universal K-matrix in (a completion of) $\Bcs\ot \uqg$. The author is much indebted to an anonymous referee who suggested to include the factor $\sigma$ in the definition of the 1-tensor universal K-matrix $\cK$ and to consider $\Mcs$ as a module category over $\Oint^{\langle \sigma \rangle}$ instead of $\Oint$. This suggestion led to a substantial revision of the paper and made it possible to interpret the multiplicative behavior of the distinguished basis of $Z(\Bcs)$ in full generality. Part of this work was done while the author was visiting the Bereich Algebra und Zahlentheorie at Universit{\"a}t Hamburg in February 2017. The author is grateful for the hospitality and the good working conditions. 

\section{Braided module categories}\label{sec:BMC}
In this introductory section we recall the notion of a braided module category over a braided monoidal category following \cite{a-Brochier13}. Let $H$ be a bialgebra. Representations of right $H$-comodule algebras provide examples of module categories. We formulate the notion of a quasitriangular comodule algebra over a quasitriangular bialgebra in Section \ref{sec:quasi-comod}. In Section \ref{sec:quasi-coid} we reformulate this notion for right coideal subalgebras of $H$. Representations of quasitriangular right $H$-comodule algebras provide examples of braided module categories.
\subsection{Strict braided module categories}
Let $(\cV,\ot,\mathbbm{1})$ be a strict monoidal category \cite[1.1]{b-Turaev}. A strict right module category over $\cV$ consists of a category $\cM$ together with a functor $\boxtimes:\cM \times \cV \rightarrow \cM$ such that
\begin{align}\label{eq:modCat1}
  M\boxtimes \mathbbm{1} = M, \qquad (M \boxtimes V) \boxtimes W = M \boxtimes (V \otimes W)
\end{align}
for any object $M$ in  $\cM$ and objects $V, W$ in $\cV$ and
\begin{align}\label{eq:modCat2}
  f\boxtimes \id_{\mathbbm{1}}=f, \qquad (f\boxtimes g) \boxtimes h=f\boxtimes (g\otimes h)
\end{align}
for any morphism $f$ in $\cM$ and morphisms $g,h$ in $\cV$. 
Recall that a monoidal category $\cV$ is called braided if it is equipped with a braiding
\begin{align*}
  c=\{c_{V,W}:V\ot W \rightarrow W\ot V\}_{V,W\in Ob(\cV)}
\end{align*}
which satisfies the relations given in \cite[1.2]{b-Turaev}.
\begin{defi}[{\cite[5.1]{a-Brochier13}}] \label{def:braidedMod}
  Let $(\cV,\ot,\mathbbm{1},c)$ be a (strict) braided monoidal category. A (strict) braided module category over $(\cV,\ot,\mathbbm{1},c)$ is a (strict) module category $(\cM,\boxtimes)$ over $\cV$ equipped with a natural automorphism $e\in \Aut(\boxtimes)$ which satisfies the relations
  \begin{align}
    e_{M\boxtimes V, W}&=(\id_M\boxtimes c_{W,V})(e_{M,W}\boxtimes \id_V)(\id_M\boxtimes c_{V,W}),\label{eq:braidedMod1}\\
    e_{M, V\otimes W}&=(\id_M\boxtimes c_{W,V})(e_{M,W}\boxtimes \id_V)(\id_M\boxtimes c_{V,W})(e_{M,V}\boxtimes \id_W)\label{eq:braidedMod2}
  \end{align}
  for all objects $M\in Ob(\cM)$ and $V,W\in Ob(\cV)$. 
\end{defi}
\begin{rema}
In view of \eqref{eq:braidedMod1}, we can rewrite relation \eqref{eq:braidedMod2} as
\begin{align*}
    e_{M, V\otimes W}&=e_{M\boxtimes V, W}(e_{M,V}\boxtimes \id_W).
\end{align*}
This is the form in which \eqref{eq:braidedMod2} appears in \cite{a-Brochier13}.
Moreover, the naturality of $e$ implies that \eqref{eq:braidedMod2} is equivalent to
\begin{align}\label{eq:braidedMod2'}
    e_{M, V\otimes W}&=(e_{M,V}\boxtimes \id_V)(\id_M\boxtimes c_{W,V})(e_{M,W}\boxtimes \id_V)(\id_M\boxtimes c_{V,W}).
\end{align}
The equality between \eqref{eq:braidedMod2} and \eqref{eq:braidedMod2'} implies that for any $M\in Ob(\cM)$ and any $V\in Ob(\cV)$ the maps $e_{M,V}$ and $c_{V,V}$ provide a repesentation of the annular braid group (or braid group of type $B_n$) on $M\boxtimes V^{\ot n}$.
\end{rema}
\begin{rema}
  For general module categories the strictness conditions \eqref{eq:modCat1} and \eqref{eq:modCat2} are replaced by natural isomorphisms $\rho_M: M\boxtimes \mathbbm{1} \rightarrow M$ and $\alpha_{M,V,W}:(M \boxtimes V) \boxtimes W \rightarrow M \boxtimes (V \otimes W)$ which satisfy a triangle and a pentagon condition, see for example \cite[(2.1), (2.2)]{a-tD98}. The general definition of a braided module category over a not necessarily strict monoidal category can be found in \cite[5.1]{a-Brochier13}.  In the following we will consider categories of modules over algebras and both $\otimes$ and $\boxtimes$ are ordinary tensor products of vector spaces. These categories are not strict, but we suppress the associativity constraints for convenience, canonically identifying multiple tensor products with different brackets.
\end{rema}
\begin{rema}\label{rem:differentK}
  As pointed out in \cite[Remark 3.6]{a-BBJ16p}, relation \eqref{eq:braidedMod2} is but one of an infinite family of possible axioms for a braided module category. In particular, it may make sense to replace \eqref{eq:braidedMod2} by the axiom 
  \begin{align}
    e_{M, V\otimes W}&=(\id_M\boxtimes c_{V,W}^{-1})(e_{M,W}\boxtimes \id_V)(\id_M\boxtimes c_{V,W})(e_{M,V}\boxtimes \id_W).\label{eq:braidedMod2-rib}
  \end{align}
If $\cV$ is a ribbon category with a twist $\rib=\{\rib_{V}:V\rightarrow V\}_{V\in Ob(\cV)}$ which satisfies $\rib_{V\ot W}=c_{W,V}c_{V,W}(\rib_V\ot \rib_W)$, see \cite[1.4]{b-Turaev}, then \eqref{eq:braidedMod2-rib} is obtained from \eqref{eq:braidedMod2} by replacing $e_{M,V}$ by $(1\ot v^{-1})e_{M,V}$. 
\end{rema}
\begin{rema}
  Relations \eqref{eq:braidedMod1} and \eqref{eq:braidedMod2} have a graphical interpretation. Assume that $\cV$ is a ribbon category, see \cite[1.4]{b-Turaev}. Then there exists a graphical calculus for morphisms in $\cV$, see \cite{a-RT90}, \cite[Theorem 2.5]{b-Turaev}, given by a functor from the category of $\cV$-colored ribbon tangles to the category $\cV$. Within this calculus the commutativity isomorphism $c_{V,W}$ is represented by the (downwards oriented) diagram in Figure \ref{fig:R-graph}.
 \begin{figure}[H]
  \centering
    \begin{align*} 
 \vcenter{\hbox{\includegraphics[height=2cm]{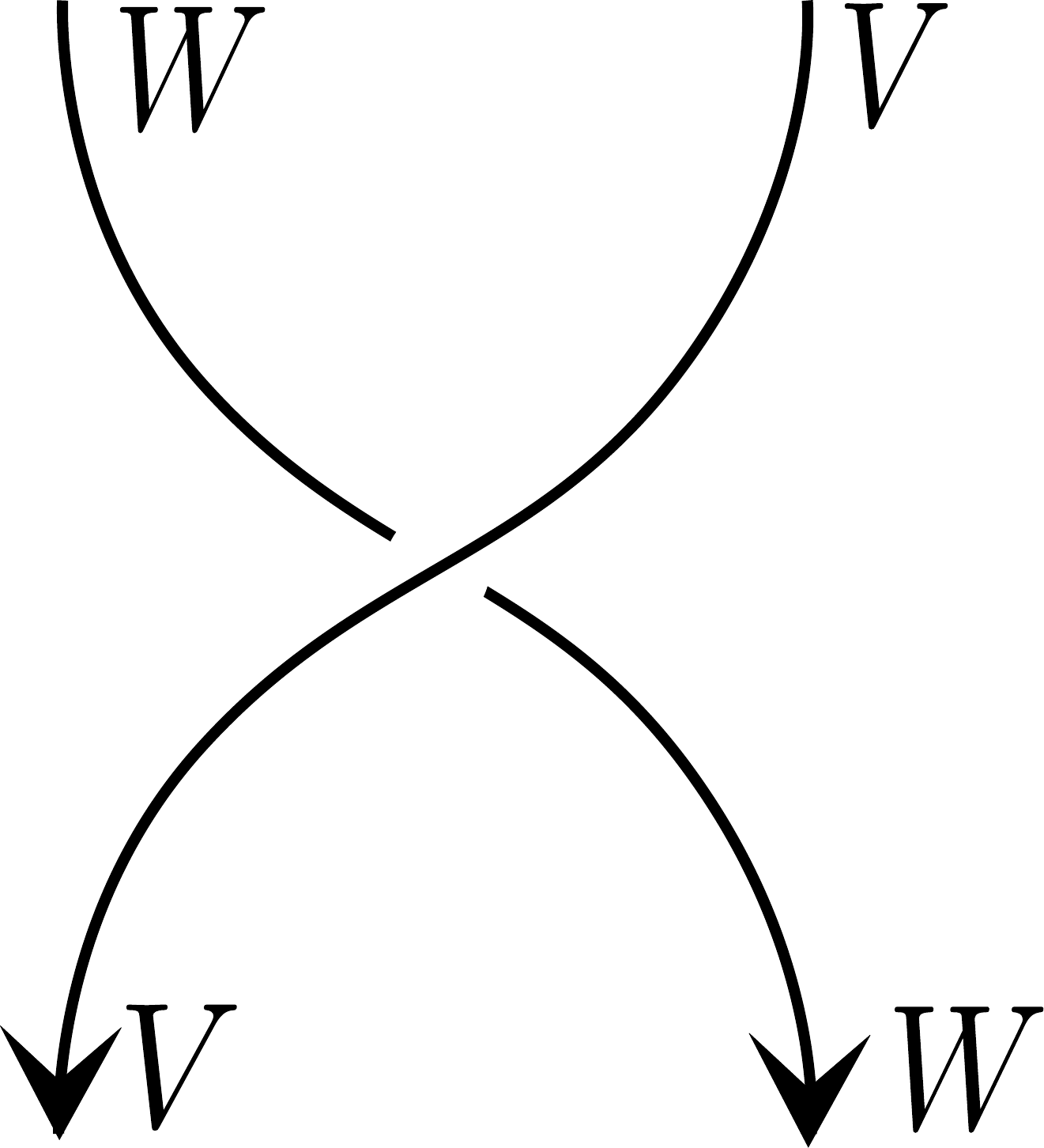}}} 
\end{align*}
  \caption{}
  \label{fig:R-graph}
\end{figure} 
The Reshetikhin-Turaev functor has been extended to braided module categories in \cite[Theorem 5.3]{a-Brochier13}. If $\cM$ is a braided module category over $\cV$ then the braiding $e_{M,V}$ is graphically represented by Figure \ref{fig:K-graph}.
\begin{figure}[H]
\begin{align*}
  \vcenter{\hbox{\includegraphics[height=2cm]{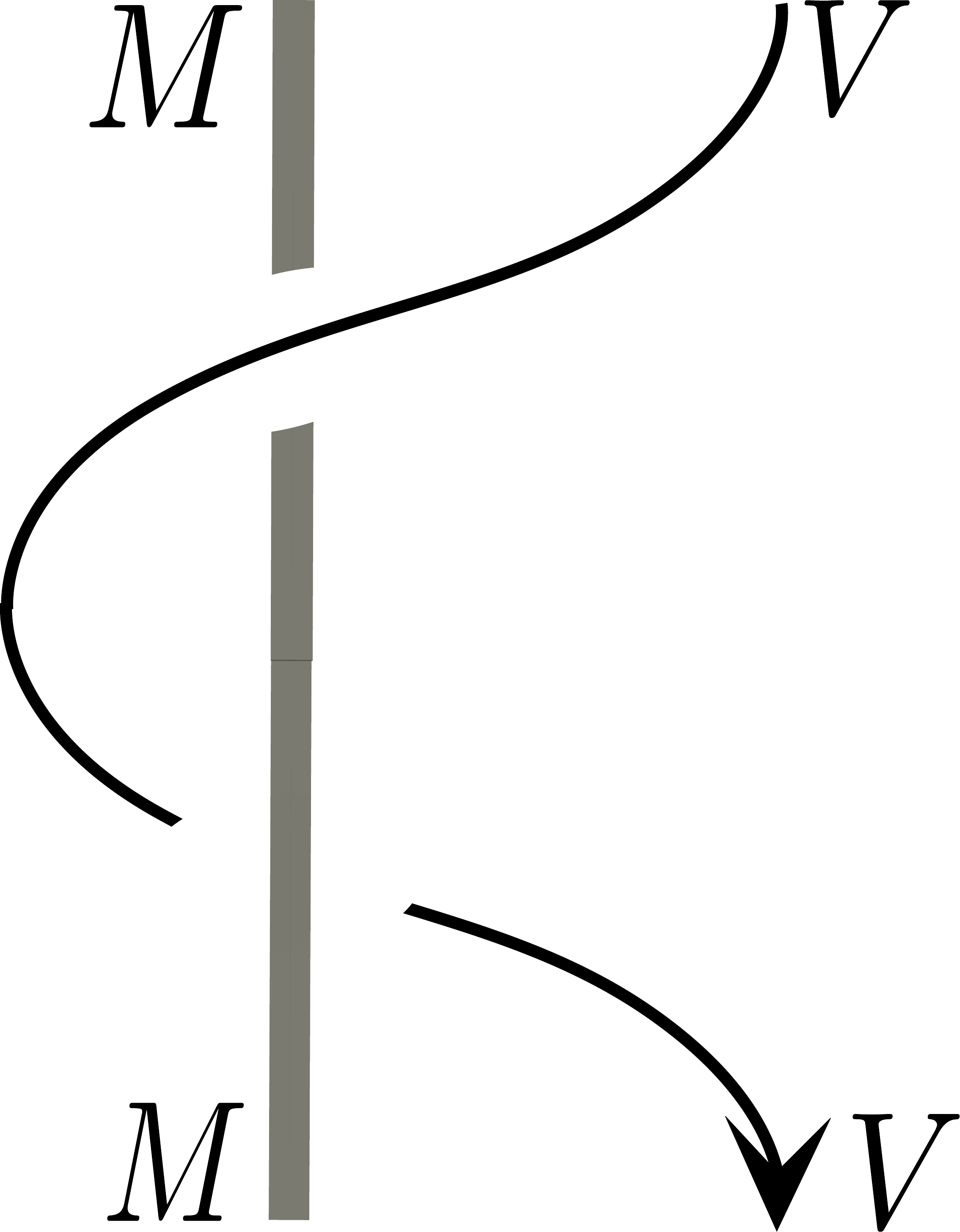}}}  
\end{align*}
\caption{} 
\label{fig:K-graph}
\end{figure}
Here the thick line represents a fixed pole which is colored by an object in $\cM$. The other line represents a $\cV$ colored ribbon which winds around the pole. Relations \eqref{eq:braidedMod1} and \eqref{eq:braidedMod2} can now be depicted by Figure \ref{fig:br-mod-1} and Figure \ref{fig:br-mod-2}, respectively.
\begin{figure}[H]
\begin{align*}
     \vcenter{\hbox{\includegraphics[height=2cm]{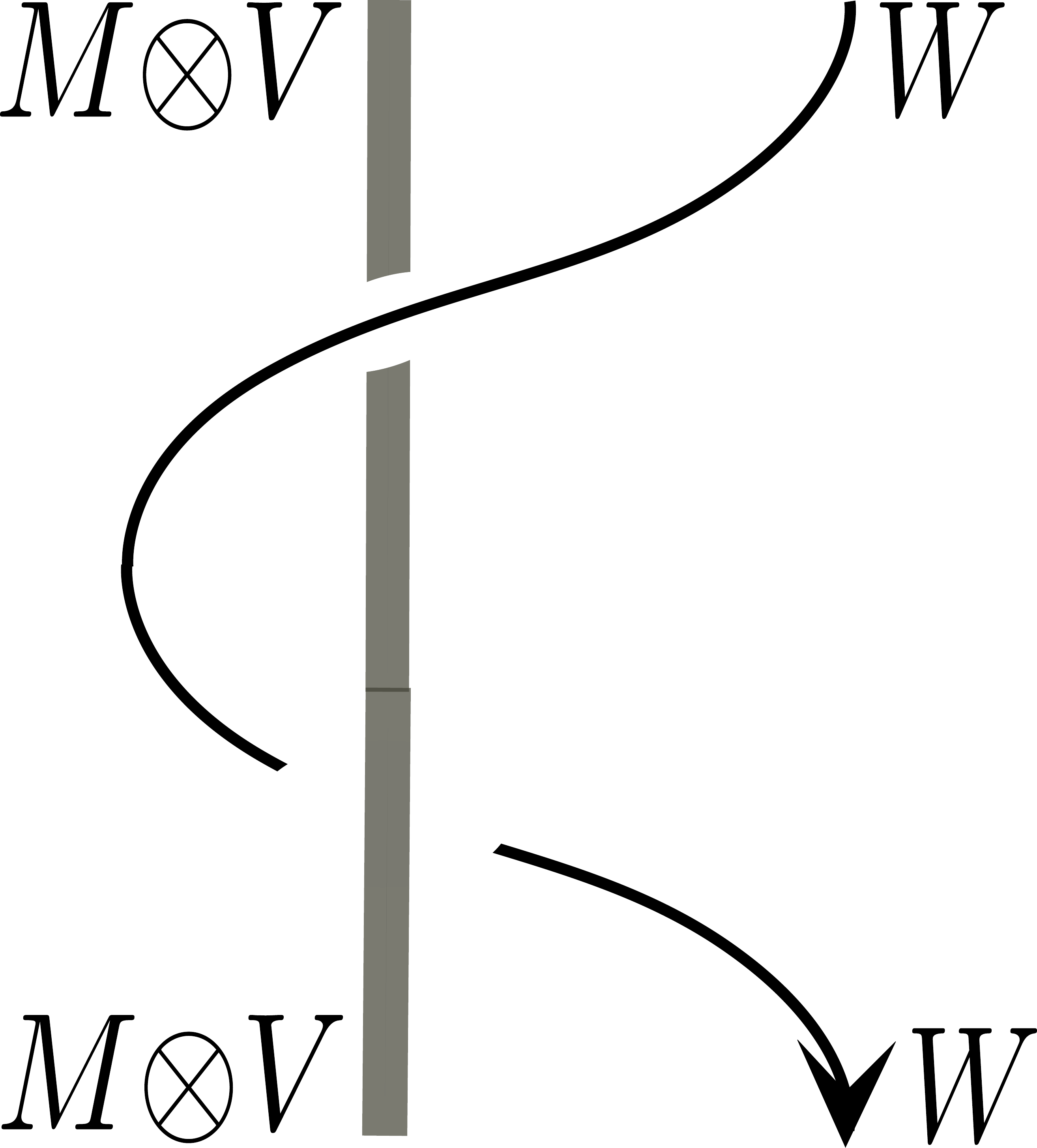}}} &= \quad
     \vcenter{\hbox{\includegraphics[height=2cm]{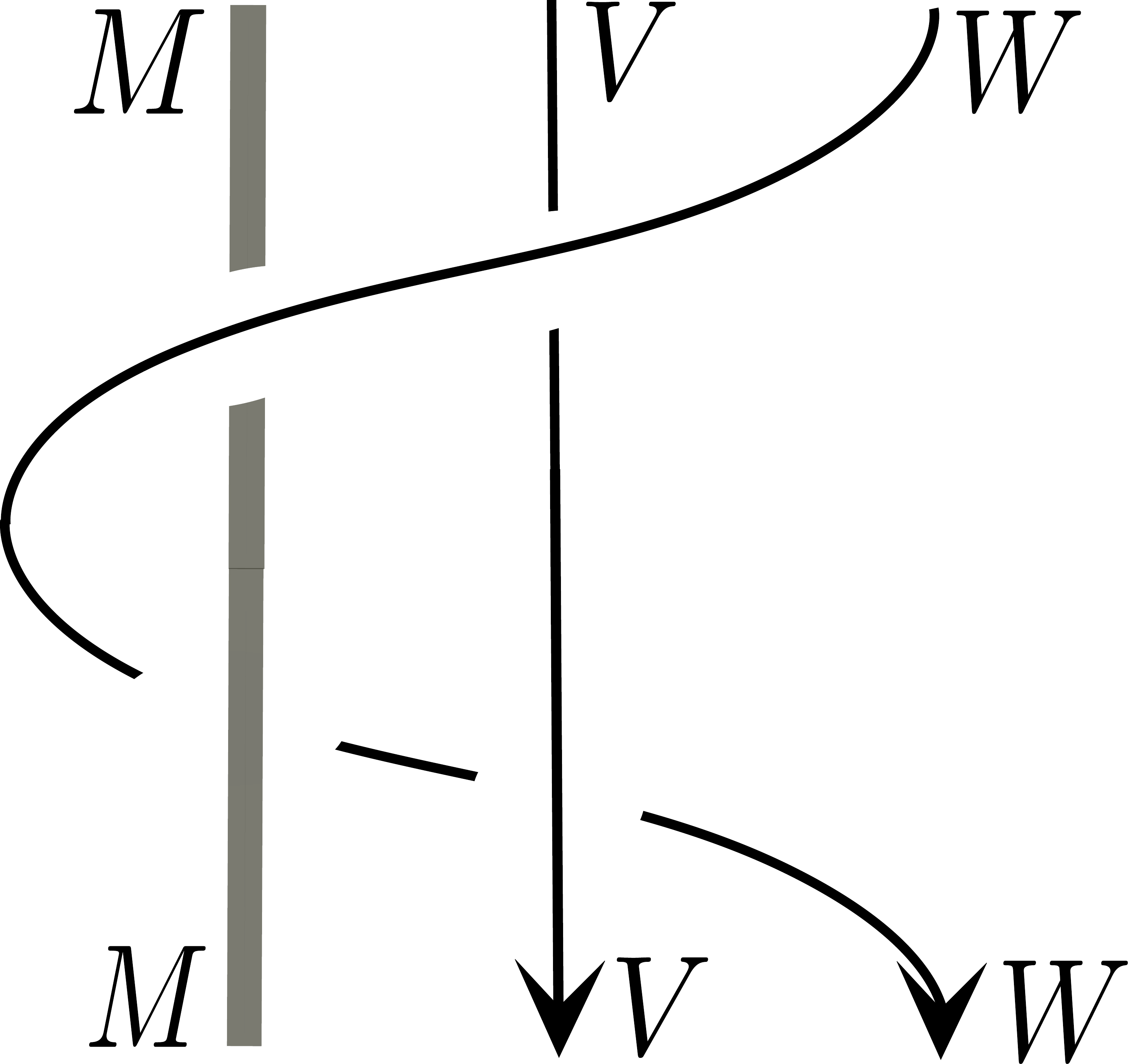}}} = \quad
     \vcenter{\hbox{\includegraphics[height=2cm]{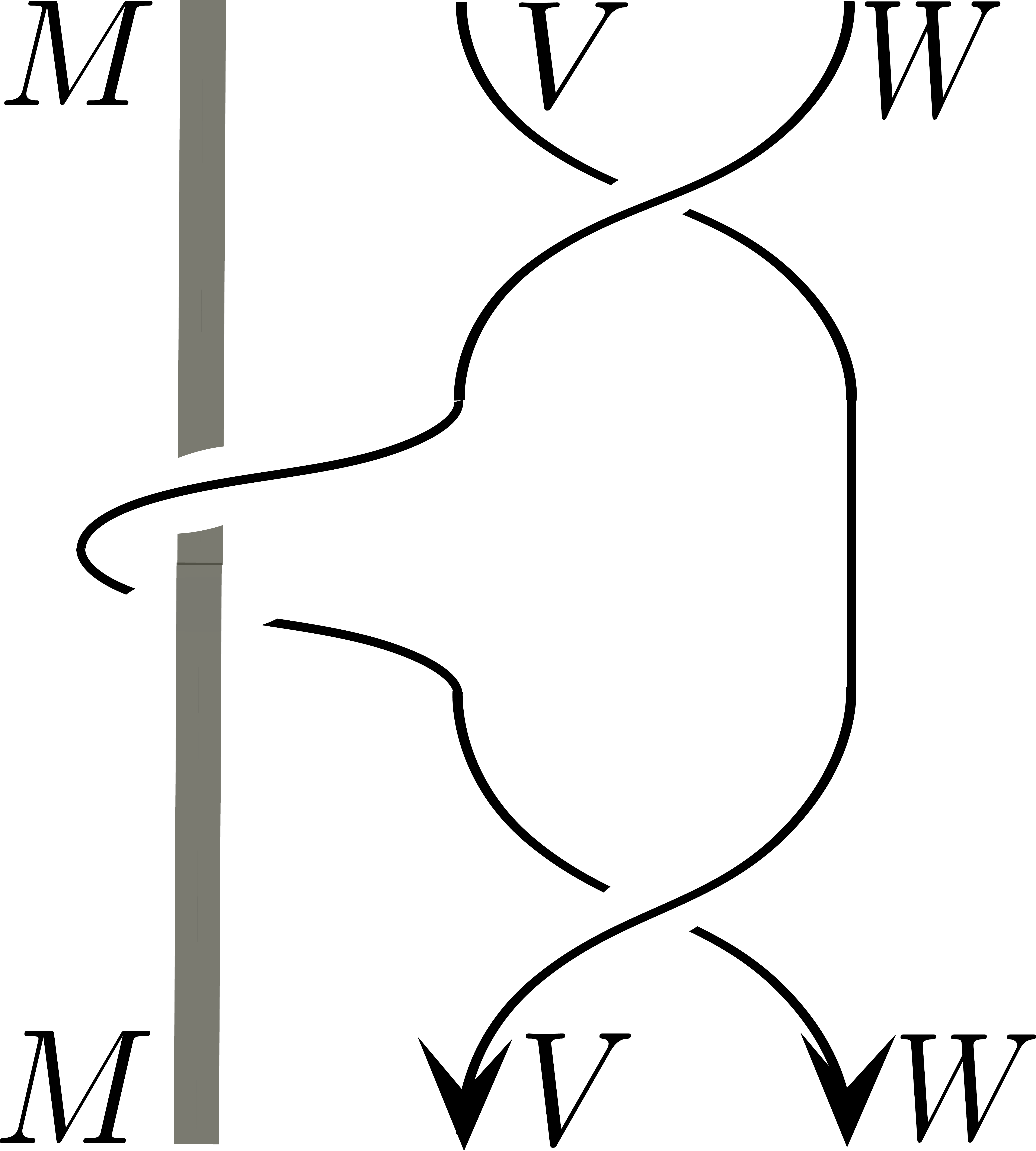}}} 
\end{align*}
\caption{}
\label{fig:br-mod-1}
\end{figure}
\begin{figure}[H]
\begin{align*}
     \vcenter{\hbox{\includegraphics[height=2cm]{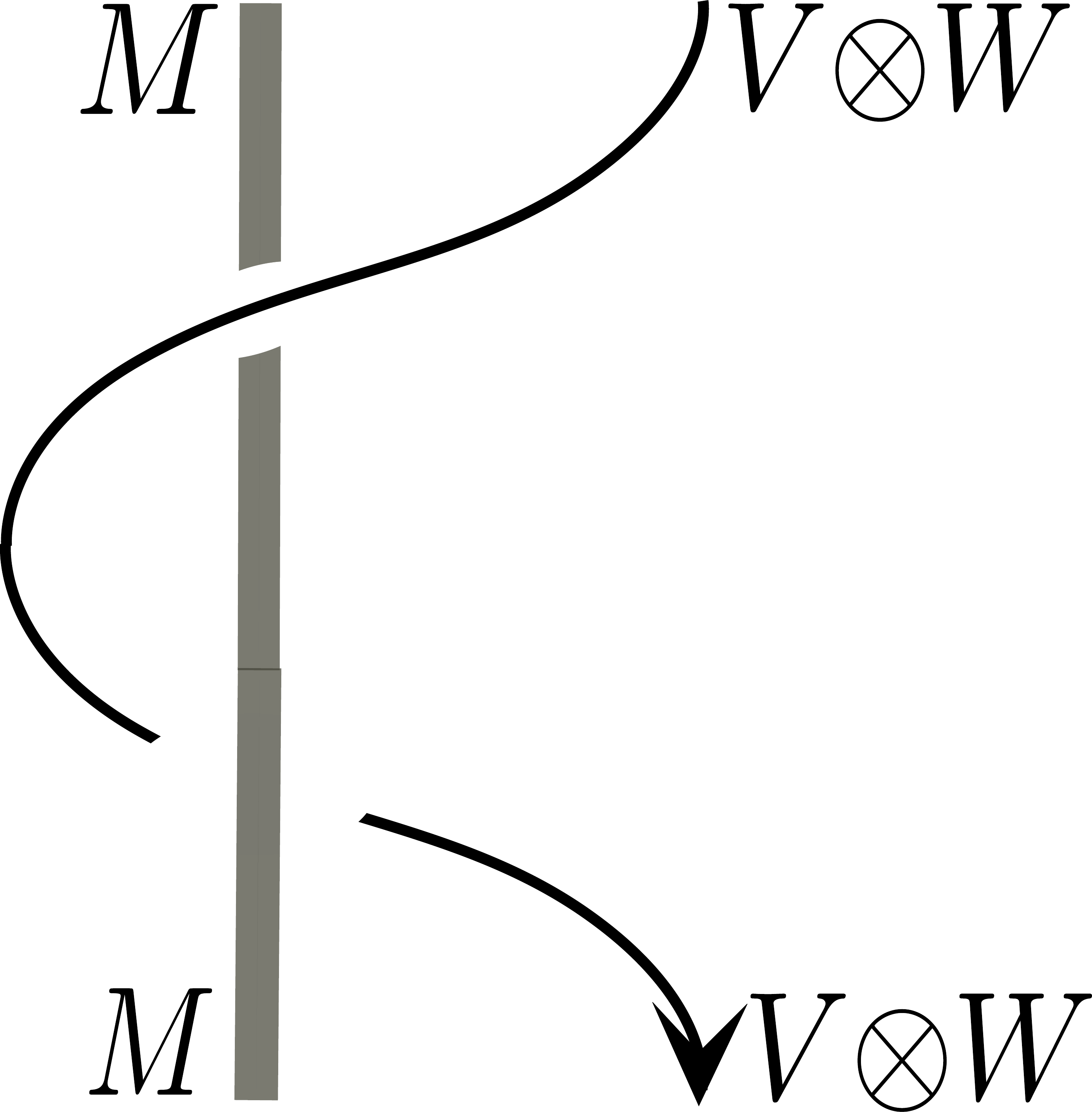}}} &= \quad
     \vcenter{\hbox{\includegraphics[height=2cm]{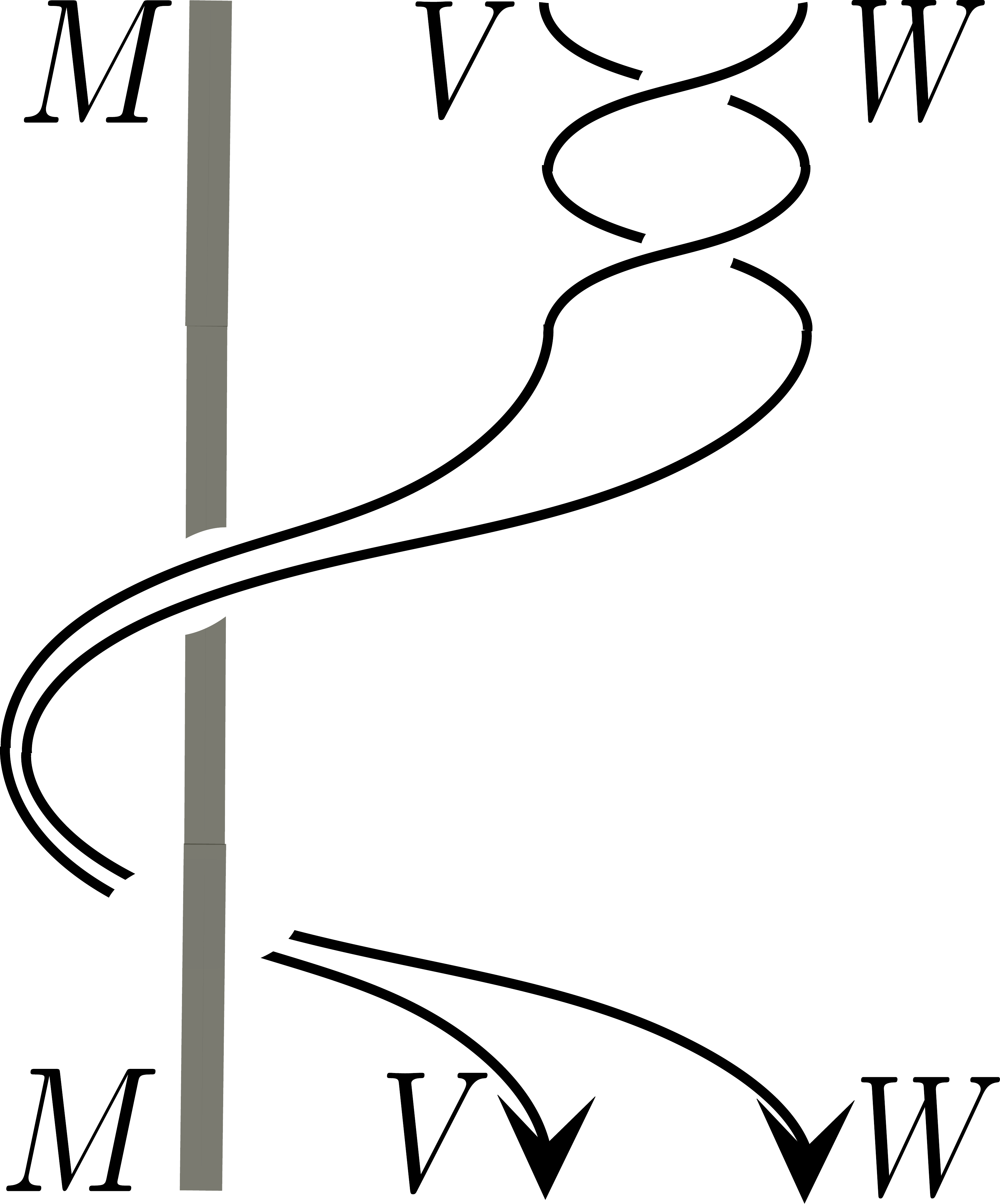}}} =\quad
     \vcenter{\hbox{\includegraphics[height=2cm]{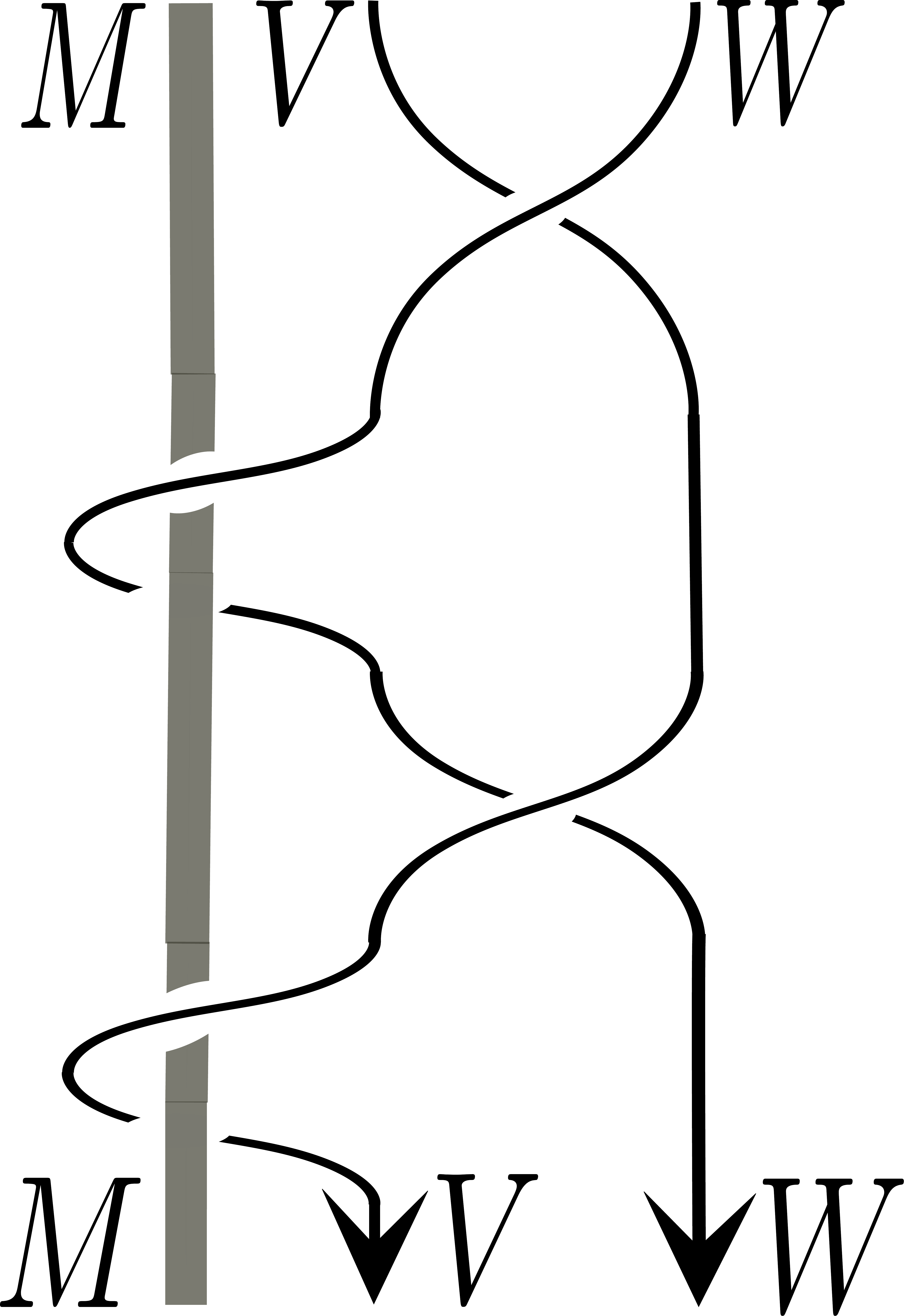}}}
\end{align*}
\caption{}
\label{fig:br-mod-2}
\label{fig:Knat2}
\end{figure}
The additional twist in the middle diagram of Figure \ref{fig:Knat2} comes from the convention that a ribbon circles around the pole in the cylinder plane, that is, with a fixed side facing the pole. If one replaces axiom \eqref{eq:braidedMod2} by axiom \eqref{eq:braidedMod2-rib} then the twist in the middle diagram of Figure \ref{fig:Knat2} disappears. Geometrically this corresponds to the convention that the ribbon lies in the paper plane when circling the pole, see \cite[Definition 3.4, Remark 3.6]{a-BBJ16p}.  
\end{rema}
\subsection{Quasitriangular comodule algebras}\label{sec:quasi-comod}
For later reference we recall the definition of a quasitriangular bialgebra following \cite[Definition VIII.2.2]{b-Kassel1}. All through this paper we will make use of the leg notation for tensor products explained in \cite[p.~172]{b-Kassel1}.
\begin{defi}\label{def:universalR}
  A bialgebra $(H,\kow,\vep)$ is called quasitriangular if there exists an invertible element $\Rmat\in H\ot H$ which satisfies the following relations:
  \begin{enumerate}
    \item $\Rmat\kow(x) = \kow^\op(x) \Rmat$ for all $x\in H$, \label{itemR1}
    \item $(\kow \ot \id)(\Rmat)=\Rmat_{13} \Rmat_{23}$,  \label{itemR2}\label{R2}
    \item $(\id \ot\kow)(\Rmat)=\Rmat_{13} \Rmat_{12}$. \label{itemR3}\label{R3}
  \end{enumerate}
 In this case the element $\Rmat$ is called a universal R-matrix for $H$. 
\end{defi}
Properties \eqref{itemR1} and \eqref{itemR2} in Definition \ref{def:universalR} imply the quantum Yang-Baxter equation 
\begin{align}\label{eq:qYB1}
  \Rmat_{12} \Rmat_{13} \Rmat_{23} = \Rmat_{23} \Rmat_{13} \Rmat_{12}.
\end{align}
Alternatively, Equation \eqref{eq:qYB1} also follows from \eqref{itemR1} and \eqref{itemR3}. If $(H,\kow,\vep,\Rmat)$ is a quasitriangular bialgebra with universal R-matrix $\Rmat$ then the category $\cV$ of $H$-modules is a braided monoidal category, see \cite[Proposition XIII.1.4]{b-Kassel1}. For $H$-modules $V,W$ the braiding $c_{V,W}:V\ot W\rightarrow W\ot V$ is given by $c_{V,W}=\flip\circ \Rmat$ where $\flip$ denotes the flip of tensor factors.

To obtain braided module categories in a similar way, we introduce the notion of a quasitriangular comodule algebra.
\begin{defi}\label{def:K-matrix}
  Let $(H,\kow,\vep,\Rmat)$ be a quasitriangular bialgebra. A right $H$-comodule algebra $B$ with coaction $\kow_B:B\rightarrow B\ot H$ is called quasitriangular if there exists an invertible element $\Kmat\in B\ot H$ which satisfies the following relations:
  \begin{enumerate}
    \item[(K1)] $\Kmat\kow_B(b)= \kow_B(b)\Kmat$ for all $b\in B$,\label{item1}
    \item[(K2)] $(\kow_B\ot \id)(\Kmat) = \Rmat_{32}\Kmat_{13}\Rmat_{23}$,\label{item2}
    \item[(K3)] $(\id \ot \kow) (\Kmat)=\Rmat_{32}\Kmat_{13} \Rmat_{23} \Kmat_{12}$. \label{item3}  
  \end{enumerate}
  Here we label the tensor components of $B\ot H\ot H$ by $1$, $2$, and $3$. The element $\Kmat$ is called a universal K-matrix for the $H$-comodule algebra $B$.  
\end{defi}
In the setting of the above definition the category $\cM$ of $B$-modules is a braided module category over the category $\cV$ of $H$-modules. The tensor product $\boxtimes$ is given by the usual tensor product $M\ot V$ of a $B$-module $M$ with an $H$-module $V$. The braiding $e_{M,V}:M\otimes V \rightarrow M\otimes V$ for $\cM$ is given by multiplication by $\Kmat$.
\begin{rema}\label{rem:refl-alg}
  A precursor of Definition \ref{def:K-matrix} appeared in \cite[Definition 4.1]{a-Enriquez07} under the name \textit{reflection algebra}. A reflection algebra over a quasitriangular bialgebra $H$ is a $H$-comodule algebra $B$ with an element $\Kmat\in B\ot H$ which satisfies (K1) and (K2) but not necessarily (K3). Observe that conditions (K1) and (K2) are preserved under multiplication of $\Kmat$ by scalars, while condition (K3) is not preserved. More generally, quasi-reflection algebras over quasi-Hopf algebras are considered in \cite{a-Enriquez07}.
\end{rema}
\subsection{Quasitriangular coideal subalgebras}\label{sec:quasi-coid}
Assume now that $B$ is a right coideal subalgebra of the quasitriangular bialgebra $H$. In other words, $B\subseteq H$ is a subalgebra for which $\kow(B)\subset B\ot H$. In this case Definition \ref{def:K-matrix} can be reformulated. Indeed, given a universal K-matrix $\Kmat$ for $B$ we can apply the counit $\vep$ to the first tensor factor of $K$ to obtain an element
\begin{align*}
  \cK=(\vep\ot \id)(\Kmat)\in H.
\end{align*} 
It follows from the properties (K1) - (K3) in Definition \ref{def:K-matrix} that $\cK$ has the following properties:
\begin{enumerate}
\setcounter{enumi}{3}
 \item[(K$1'$)]  $\cK b=b\cK$ for all $b\in B$,
 \item[(K$2'$)]  $\Rmat_{21}\cK_2 \Rmat_{12} \in B\ot H$,
 \item[(K$3'$)]  $\kow(\cK)=\Rmat_{21} \cK_{2} \Rmat_{12}\cK_{1}$.
\end{enumerate}
Indeed, the three above relations are obtained by applying the counit to the first tensor factor in each of the relations (K1) - (K3), respectively. In the weaker setting of reflection algebras discussed in Remark \ref{rem:refl-alg}, the following lemma is stated in \cite[Remark 4.2]{a-Enriquez07}.
\begin{lem}\label{lem:KcalK}
  Let $B$ be a right coideal subalgebra of a quasitriangular bialgebra $H$. 
  
  1) If $B$ is quasitriangular with universal K-matrix  $\Kmat\in B\ot H$, then the invertible element $\cK=(\vep\ot \id)(\Kmat)$ satisfies properties {\upshape (K$1'$)-(K$3'$)} above. In this case $\Kmat=\Rmat_{21}\cK_2 \Rmat_{12}$.
   
  2) Conversely, if an invertible element $\cK\in H$ satisfies the properties {\upshape (K$1'$)-(K$3'$)} above, then $B$ is quasitriangular with universal K-matrix $\Rmat_{21}\cK_2 \Rmat_{12} \in B\ot H$.
\end{lem}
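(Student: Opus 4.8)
The plan is to prove both directions by straightforward translation between the ``two-sided'' data $\Kmat\in B\ot H$ and the ``one-sided'' data $\cK\in H$, using repeatedly the two elementary facts that $\kow_B$ is just the restriction of $\kow$ (so $\kow_B(b)=\kow(b)$ for $b\in B$) and that $(\vep\ot\id)\circ\kow_B=\id$, together with the axioms (R1)--(R3) for the universal R-matrix.

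For part 1), I would assume $\Kmat\in B\ot H$ satisfies (K1)--(K3) and set $\cK=(\vep\ot\id)(\Kmat)$. The properties (K$1'$)--(K$3'$) are obtained by applying $\vep$ to the first leg of (K1)--(K3) respectively; here one uses that $\vep$ is an algebra map and that $(\vep\ot\id\ot\id)(\Rmat_{32})=\Rmat_{21}$ acting in legs that become legs $1,2$ after deletion, and similarly $(\vep\ot\id\ot\id)(\Kmat_{13})=\cK_2$, $(\vep\ot\id\ot\id)(\Rmat_{23})=\Rmat_{12}$, etc.\ --- i.e.\ deleting the first tensor slot and relabelling. Then the real content of part 1) is the formula $\Kmat=\Rmat_{21}\cK_2\Rmat_{12}$. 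To get this I would apply $(\kow_B\ot\id)$ to both sides; the left side is governed by (K2), while the right side $\kow_B(\Rmat_{21}\cK_2\Rmat_{12})$ is computed using that $\Rmat_{21},\Rmat_{12}\in H\ot H$ sit in the last two legs so $\kow_B$ acts trivially on them in the sense that they are already ``comodule-constant'', and that $\kow_B(\cK)$ --- wait, $\cK\in H$ is not in $B$, so instead I would argue as follows: both $\Kmat$ and $\Rmat_{21}\cK_2\Rmat_{12}$ are elements of $B\ot H$ (the latter by (K$2'$)) that have the same image under the injective map $b\mapsto \kow_B(b)$? That is not obviously injective. A cleaner route: apply $\vep$ to the first leg of the candidate identity to check it is consistent ($\vep(\Rmat)=1$ on each leg gives $\cK=\cK$), and then use condition (K1) and (K$1'$) to pin things down. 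Actually the slickest argument is: define $\Kmat':=\Rmat_{21}\cK_2\Rmat_{12}$, show using (K$1'$), (K$2'$), (K$3'$) and the R-matrix axioms that $\Kmat'$ satisfies (K1)--(K3), hence is ``a'' universal K-matrix; then show any universal K-matrix is determined by its image $\cK$ under $(\vep\ot\id)$. This last uniqueness follows by applying $(\id\ot\kow)$ versus $(\kow_B\ot\id)$ and comparing, or more directly: if $\Kmat_1,\Kmat_2$ both restrict to $\cK$, then from (K2), $(\kow_B\ot\id)(\Kmat_1-\Kmat_2)=\Rmat_{32}(\Kmat_1-\Kmat_2)_{13}\Rmat_{23}$ forces, after applying $(\vep\ot\id\ot\id)$ and using $\Rmat$ invertible, $(\Kmat_1-\Kmat_2)_{13}=0$ hence $\Kmat_1=\Kmat_2$ --- this is the key injectivity-type observation.

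For part 2), I would run the construction in reverse: given invertible $\cK\in H$ with (K$1'$)--(K$3'$), set $\Kmat:=\Rmat_{21}\cK_2\Rmat_{12}$, which lies in $B\ot H$ precisely by (K$2'$) and is invertible since $\Rmat$ and $\cK$ are. Then (K1) follows from (K$1'$) together with (R1): for $b\in B$, $\Kmat\kow(b)=\Rmat_{21}\cK_2\Rmat_{12}\kow(b)=\Rmat_{21}\cK_2\kow^{\op}(b)\Rmat_{12}$ and, since $\kow^{\op}(b)=\sum b_{(2)}\ot b_{(1)}$ with $b_{(1)}\in B$ (right coideal) one commutes $\cK_2$ past $b_{(1)}$ in leg $2$ using (K$1'$), then restores $\Rmat_{12}$ via (R1) again. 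Conditions (K2) and (K3) follow from (K$3'$) and the R-matrix axioms (R2), (R3), (R1) by a diagram-chase in $H^{\ot 3}$: expand $(\kow_B\ot\id)(\Kmat)=(\kow\ot\id)(\Rmat_{21}\cK_2\Rmat_{12})$, push the $\kow$ through each factor using $(\kow\ot\id)(\Rmat)=\Rmat_{13}\Rmat_{23}$ (suitably permuted) and $(\kow\ot\id)$ applied to $\cK_2$ which is $\cK$ in one leg so gives $\kow(\cK)$ spread over legs $1,2$, then substitute (K$3'$) $\kow(\cK)=\Rmat_{21}\cK_2\Rmat_{12}\cK_1$ and repeatedly apply the hexagon/QYBE relations to rearrange into $\Rmat_{32}\Kmat_{13}\Rmat_{23}$; similarly for (K3) using $(\id\ot\kow)(\Rmat)=\Rmat_{13}\Rmat_{12}$.

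I expect the main obstacle to be purely bookkeeping: the leg-relabelling when applying $(\kow\ot\id)$ or $(\id\ot\kow)$ to products like $\Rmat_{21}\cK_2\Rmat_{12}$ sitting inside $H^{\ot 3}$, and getting every Yang--Baxter move in the right order so that the answer collapses exactly to $\Rmat_{32}\Kmat_{13}\Rmat_{23}$ and $\Rmat_{32}\Kmat_{13}\Rmat_{23}\Kmat_{12}$. There is no conceptual difficulty --- everything is forced by (R1)--(R3), (K$1'$)--(K$3'$), and the coideal property $\kow(B)\subseteq B\ot H$ --- but the computation verifying (K3) in particular requires a careful multi-step application of the quantum Yang--Baxter equation \eqref{eq:qYB1}. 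The uniqueness observation $\Kmat=\Rmat_{21}\cK_2\Rmat_{12}$ in part 1), via applying $(\vep\ot\id\ot\id)$ to (K2), is the one genuinely non-formal ingredient and I would isolate it as the first step.
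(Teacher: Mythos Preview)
Your approach is essentially the paper's, and the strategy is correct. Two points of comparison are worth making.

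For part 1), the paper's argument for $\Kmat=\Rmat_{21}\cK_2\Rmat_{12}$ is a one-liner and avoids your detour through a uniqueness statement: simply use the counit identity $(\vep\ot\id\ot\id)\circ(\kow\ot\id)=\id$ on $\Kmat$ and apply (K2) to the intermediate expression:
\[
  \Kmat=(\vep\ot\id\ot\id)(\kow\ot\id)(\Kmat)
       \stackrel{\mathrm{(K2)}}{=}(\vep\ot\id\ot\id)(\Rmat_{32}\Kmat_{13}\Rmat_{23})
       =\Rmat_{21}\cK_2\Rmat_{12}.
\]
You do eventually isolate this as ``applying $(\vep\ot\id\ot\id)$ to (K2)'', but you frame it as a uniqueness argument comparing two universal K-matrices, which is unnecessary scaffolding.

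For part 2), there is a slip in your sketch of (K2). You write that $(\kow\ot\id)$ applied to $\cK_2$ ``gives $\kow(\cK)$ spread over legs $1,2$'', and that one must then substitute (K$3'$). But $\cK_2=1\ot\cK$ has $\cK$ in the \emph{second} slot, so $(\kow\ot\id)(\cK_2)=1\ot 1\ot\cK=\cK_3$; no $\kow(\cK)$ appears and (K$3'$) is not needed here. The verification of (K2) is then immediate from (R2), (R3):
\[
  (\kow\ot\id)(\Rmat_{21}\cK_2\Rmat_{12})=\Rmat_{32}\Rmat_{31}\cK_3\Rmat_{13}\Rmat_{23}
  =\Rmat_{32}\Kmat_{13}\Rmat_{23}.
\]
It is only for (K3), where $(\id\ot\kow)$ hits $\cK$ in the second slot and produces $\kow(\cK)$ in legs $2,3$, that (K$3'$) and the Yang--Baxter equation are genuinely required; the paper carries out exactly that multi-step rearrangement you anticipate.
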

\begin{proof}
  To verify the second part of statement 1) one calculates
    \begin{align*}
      \Kmat=(\vep\ot \id\ot \id) (\kow\ot \id)(\Kmat) \stackrel{\mathrm{(K2)}}{=}  (\vep\ot \id\ot \id)(\Rmat_{32} \Kmat_{13} \Rmat_{32})=\Rmat_{21}\cK_2 \Rmat_{12}.
    \end{align*}
  To verify statement 2), assume that $\cK\in H$ satisfies the properties (K$1'$)-(K$3'$) above. As $\cK_2$ commutes with $\kow^\op(b)$ by the coideal property, one obtains property (K1) for $\Kmat=\Rmat_{21}\cK_2 \Rmat_{12}$ from  property \eqref{itemR1} in Definition \ref{def:universalR}. 
Moreover, the relation
  \begin{align*}
    (\kow\ot \id)(\Rmat_{21}\cK_2 \Rmat_{12})&= \Rmat_{32} \Rmat_{31} \cK_3 \Rmat_{13} \Rmat_{23}
  \end{align*}
implies that property (K2) of  Definition \ref{def:K-matrix} holds for $\Kmat=\Rmat_{21}\cK_2 \Rmat_{12}$.
Finally, using
  \begin{align*}
    (\id\ot \Delta)(\Rmat_{21})&=\Rmat_{21}\Rmat_{31},
  \end{align*}
  Property (K$3'$), and the quantum Yang-Baxter equation \eqref{eq:qYB1} one calculates 
  \begin{align*}
    (\id \ot \Delta)(\Rmat_{21}\cK_2 \Rmat_{12})&=\Rmat_{21} \Rmat_{31} \Rmat_{32} \cK_{3} \Rmat_{23}\cK_{2}\Rmat_{13}\Rmat_{12}\\
      &=\Rmat_{32} \Rmat_{31} \Rmat_{21} \cK_{3} \Rmat_{23}\Rmat_{13}\cK_2\Rmat_{12}\\
      &=\Rmat_{32} \Rmat_{31} \cK_3 \Rmat_{21}\Rmat_{23}\Rmat_{13}\cK_2\Rmat_{12}\\
      &=\Rmat_{32} \Rmat_{31} \cK_3 \Rmat_{13}\Rmat_{23}\Rmat_{21}\cK_2\Rmat_{12}.
  \end{align*}
  This shows that $\Kmat=\Rmat_{21}\cK_2 \Rmat_{12}$ satisfies property (K3) of Definition \ref{def:K-matrix}.
\end{proof}
Lemma \ref{lem:KcalK} shows that the information contained in $\Kmat$ and $\cK$ is equivalent and hence they both deserve to be called a universal K-matrix for $B$. For this reason we work with the following terminology.
\begin{conv}
  Let $B$ be a right coideal subalgebra of a quasitriangular bialgebra H. An element $\cK\in H$ satisfying {\upshape (K$1'$), (K$2'$)}, and {\upshape (K$3'$)} is called a 1-tensor universal K-matrix for B. An element $\Kmat$ satisfying {\upshape(K$1$), (K$2$)}, and {\upshape (K$3$)} is called a 2-tensor universal K-matrix for $B$. We drop the qualifiers 1-tensor and 2-tensor if it is clear from the context whether we consider $\cK$ or $\Kmat$.
\end{conv}  
\begin{rema}
  Lemma \ref{lem:KcalK} shows that Definition \ref{def:K-matrix} is a refinement of the definition of a braided coideal subalgebra given previously in \cite[Definition 4.10]{a-BalaKolb15p}. Indeed, condition (K$2'$) was missing in our previous attempt to define a quasitriangular (braided) coideal subalgebra. The new definition \ref{def:K-matrix} is preferable, because the universal K-matrix is now specific to the coideal subalgebra $B$. In the definition given in \cite[Definition 4.10]{a-BalaKolb15p} a universal K-matrix for $B$ would also be a universal K-matrix for any coideal subalgebra contained in $B$. 
\end{rema}
\begin{rema}
  Let $B$ be a right coideal subalgebra of a quasitriangular Hopf algebra $H$. Let $\cA$ be the category of $H$-modules and let $\cB$ be the category obtained by restricting modules in $\cA$ to $B$ and allowing $B$-module homomorphisms. Then the pair $(\cB,\cA)$ is a tensor pair as defined in \cite{a-tD98}, see also \cite[4.1]{a-BalaKolb15p}. Assume additionally that $B$ is a quasitriangular coideal subalgebra. In this case relations (K$1'$) and (K$3'$) imply that multiplication by $\cK$ defines a cylinder braiding for the tensor pair $(\cB,\cA)$ as defined in \cite{a-tD98}, \cite[Section 4]{a-tDHO98}. Note that axiom (K$2'$) is not required to obtain the cylinder braiding. 
\end{rema}
\section{The universal K-matrix for quantum symmetric pairs}\label{sec:univ-K-qsp}
We now turn to the theory of quantum symmetric pairs. In this section we show that the category of finite dimensional representations of a quantum symmetric pair coideal subalgebra is a braided module category. In Sections \ref{sec:qg-notation} and \ref{sec:equivariantization} we recall quantized enveloping algebras and discuss the $\Z_2$-equivariantization of their category of finite dimensional representations. In Section \ref{sec:qsp} we fix notation for quantum symmetric pairs and recall essential results from \cite{a-BalaKolb15p}. These results are subsequently developed further to obtain Theorem \ref{thm:scrK} which provides all properties of a universal K-matrix as in Definition \ref{def:K-matrix} in the setting of quantum symmetric pairs. 
\subsection{Quantum group notation}\label{sec:qg-notation}
Let $\gfrak$ be a finite dimensional complex semisimple Lie algebra with a fixed Cartan subalgebra $\hfrak$ and a fixed set of simple roots $\roots=\{\alpha_i\,|\,i\in I\}$. Here $I$ denotes the set of nodes of the corresponding Dynkin diagram. Let $Q$ denote the root lattice of $\gfrak$, write $Q^+=\oplus_{i\in I}\N_0\alpha_i$, and let $P$ denote the weight lattice. We write $W$ to denote the Weyl group generated by the simple reflections $\sigma_i$ for $i\in I$. Let $(\cdot,\cdot)$ denote the invariant scalar product on the real vector space generated by $\Pi$ such that $(\alpha,\alpha)=2$ for all short roots $\alpha$ in each component.

Fix a field $\field$ of characteristic zero and let $q$ denote an indeterminate. Choose $d\in \N$ minimal such that $(\mu,\nu)\in \frac{1}{d}\Z$ for all $\mu,\nu\in P$. The quantized enveloping algebra $\uq=\uqg$ is the $\field(q^{1/d})$-algebra generated by elements $E_i, F_i, K_i$, and $K_i^{-1}$ for all $i\in I$ and relations given in \cite[3.1.1]{b-Lusztig94}. It is a Hopf algebra with coproduct $\kow$, counit $\vep$, and antipode $S$ given by
\begin{align}
  \kow(E_i)&=E_i \ot 1 + K_i\ot E_i,& \vep(E_i)&=0, & S(E_i)&=-K_i^{-1} E_i,\label{eq:E-copr}\\
  \kow(F_i)&=F_i\ot K_i^{-1} + 1 \ot F_i,&\vep(F_i)&=0, & S(F_i)&=-F_i K_i,\label{eq:F-copr}\\
  \kow(K_i)&=K_i \ot K_i,& \vep(K_i)&=1, & S(K_i)&=K_i^{-1}.\label{eq:K-copr}
\end{align}
We write $U^+$, $U^-$, and $U^0$ to denote the subalgebras generated by the elements of the sets $\{E_i\,|\,i\in I\}$, $\{F_i\,|\,i\in I\}$, and $\{K_i,K_i^{-1}\,|\,i \in I\}$ respectively. For any $U^0$-module $V$ and any $\lambda\in P$ let
\begin{align*}
  V_\lambda=\{v\in V\,|\,K_i v = q^{(\lambda,\alpha_i)}v \mbox{ for all $i\in I$}\}
\end{align*}
denote the corresponding weight space. We will apply this notation in particular to $U^+$ and $U^-$ which are $U^0$-modules with respect to the left adjoint action.
For $i\in I$ the commutator of $F_i$ with elements in $U^+$ can be expressed in terms of two skew derivations $r_i, {}_ir: U^+ \rightarrow U^+$ which are uniquely determined by the following property
\begin{align}\label{eq:riir}
  [x,F_i]= \frac{1}{(q_i-q_i^{-1})}\big(r_i(x)K_i -  K_i^{-1}{}_ir(x)\big) \qquad \mbox{for all $x\in U^+$,}
\end{align}
see \cite[Proposition 3.1.6]{b-Lusztig94}.

It is useful to work with completions $\sU$ and $\sU^{(2)}_0$ of $\uq$ and $\uq\ot\uq$, respectively, which have been discussed in detail in \cite[Section 3]{a-BalaKolb15p}. In brief, let $\Oint$ denote the category of finite dimensional $\uq$-modules of type 1, and let $\For:\Oint\rightarrow {\mathcal V}ect$ be the forgetful functor into the category of $\field(q^{1/d})$-vector spaces. The set of natural transformations $\sU=\End(\For)$ has the structure of a $\field(q^{1/d})$-algebra which contains $\uq$.
In this paper, as in \cite{a-BalaKolb15p}, we use the notion of a completion somewhat laxly to denote an overalgebra, and we ignore any questions of topology.

Important elements in $\sU$ are the Lusztig automorphisms $T_i$ for $i\in I$, defined as $T''_{i,1}$ in \cite[5.2.1]{b-Lusztig94}, which provide a braid group action on any $V\in Ob(\Oint)$. For any $w\in W$ with reduced expression $w=\sigma_{i_1} \dots \sigma_{i_t}$ we define $T_w=T_{i_1}\dots T_{i_t}\in \sU$. We also use the symbols $T_i$ for $i\in I$ and $T_w$ for $w\in W$ to denote the corresponding algebra automorphisms of $\uq$.

For any weight $\mu\in P$ define $K_\mu\in \sU$ by $K_\mu v_\lambda = q^{(\mu,\lambda)} v_\lambda$ for all weight vectors $v_\lambda$ of weight $\lambda$. Let $\Uc^0$ denote the subalgebra of $\sU$ generated by $\{K_\mu\,|\,\mu\in P\}$. The subalgebra $\Uc$ of $\sU$ generated by $U$ and $\Uc^0$ is a Hopf algebra which is often called the simply connected form of the quantized enveloping algebra. 

More generally, for any $n\in \N$ consider the functor
\begin{align}\label{eq:Un-def}
  \For^{(n)}:\underbrace{\Oint\times \dots \times \Oint}_{\text{$n$-factors}} \rightarrow {\mathcal V}ect, \qquad (V_1,\dots,V_n)\mapsto V_1\ot \cdots \ot V_n.
\end{align}
The set $\sU^{(n)}_0=\End(\For^{(n)})$ is a $\field(q^{1/d})$-algebra which contains $\uq^{\ot n}$. Observe that $\sU^{(1)}_0=\sU$. The multiplication in $\sU^{(n)}_0$ is given by composition of natural transformations. For $n\ge 2$ we denote the multiplication in $\sU^{(n)}_0$ by $\cdot$ for better readability. The coproduct $\kow:\uq\rightarrow \uq\ot \uq$ can be extended to an algebra homomorphism $\kow: \sU \rightarrow \sU^{(2)}_0$. Similarly, we can extend $\id \ot \kow$ and $\kow\ot \id: \uq^{\ot 2}\rightarrow \uq^{\ot 3}$ to maps $\id \ot \kow$ and $\kow \ot \id:\sU^{(2)}_0 \rightarrow \sU^{(3)}_0$.
\begin{eg}\label{eg:kappa}
  Let $f:P\rightarrow P$ be any map. For every $V,W\in Ob(\Oint)$ define a linear map $\kappa_{V,W}^f: V\ot W\rightarrow V\ot W$ by the property that
  \begin{align*}
     \kappa_{V,W}^f(v_\mu\ot w_\lambda)= q^{(f(\mu),\lambda)}v_\mu\ot w_\lambda \qquad \mbox{for all $v_\mu\in V_\mu$, $w_\lambda\in W_\lambda$.}
  \end{align*}
The collection $\kappa^f=(\kappa_{V,W}^f)_{V,W\in Ob(\Oint)}$ defines an element in $\sU^{(2)}_0$. We write $\kappa=\kappa^\id$ and $\kappa^-=\kappa^{-\id}$.  
\end{eg}
Another important element in $\sU^{(2)}_0$ is the quasi R-matrix $R$ for $\uq$. Here we recall the characterization of $R$ in terms of the bar involution for $\uq$, see \cite[Section 4.1]{b-Lusztig94}.
The bar involution for $\uq$ is the $\field$-algebra automorphism $\barU:\uq\rightarrow \uq$ defined by
\begin{align}
  \overline{q^{1/d}}^U&=q^{-1/d}, & \overline{E_i}^U&=E_i, & \overline{F_i}^U&=F_i,& \overline{K_i}^U&=K_i^{-1}.\label{eq:defbarUqg}
\end{align}
The bar involution $\barU$ is extended to $\uq\ot \uq$ diagonally as $\barU\ot \barU$. This extension does not map the diagonal subalgebra $\kow(\uq)$ to itself. However, there exists an element $R\in \prod_{\mu\in Q^+} U^-_\mu\ot U^+_\mu\subset \sU^{(2)}_0$, the quasi R-matrix, which satisfies the intertwiner relation
\begin{align}\label{eq:kowR=Rkow}
  \kow(\overline{u}^U) \cdot R= R \cdot (\barU\ot\barU)(\kow(u)) \qquad \mbox{for all $u\in \uq$.}
\end{align}
By slight abuse of notation we write $R=\sum_{\mu\in Q^+} R_\mu$  with $R_\mu\in U^-_\mu\ot U^+_\mu$. The quasi R-matrix  $R$ is uniquely determined by the intertwiner relation \eqref{eq:kowR=Rkow} together with the normalization $R_0=1\ot 1$, see \cite[Theorem 4.1.2]{b-Lusztig94}. 

Recall the element $\kappa^-\in \sU^{(2)}_0$ defined in Example \ref{eg:kappa}. The element 
\begin{align*}
  \Rmat=R_{21} \cdot \kappa^- \in \sU^{(2)}_0
\end{align*}
is a universal R-matrix for $\uq$. In other words, the element $\Rmat$ satisfies
\begin{align}\label{eq:RuuopR}
  \Rmat\cdot \kow(u)&=\kow^\op(u)\cdot \Rmat \qquad \mbox{for all $u\in \sU$}
\end{align}
in $\sU^{(2)}_0$, and Properties (2) and (3) of Definition \ref{def:universalR} hold as relations in $\sU^{(3)}_0$, see \cite[Proposition 32.2.4]{b-Lusztig94}.

Recall from \cite[Definition XIV.6.1]{b-Kassel1} that a ribbon algebra $(H,\kow,\vep,S,\Rmat,\rib)$ is a quasitriangular Hopf algebra $(H,\kow,\vep,S,\Rmat)$ with a central element $\rib\in H$ such that
\begin{align}\label{eq:rib-conditions}
  \kow(\rib) &= (\Rmat_{21}\Rmat)^{-1}\rib\ot \rib, & \vep(\rib)&=1, & S(\rib)=\rib.
\end{align} 
The algebra $\sU$ contains a ribbon element. Let $P^+\subset P$ be the set of dominant integral weights. For any $\lambda\in P^+$ let $V(\lambda)$ denote the corresponding irreducible $\uq$-module of highest weight $\lambda$. As $\Oint$ is a semisimple category with simple objects $V(\lambda)$, $\lambda\in P^+$, we can define an element $\rib=(\rib_V)_{V\in Ob(\Oint)}\in \sU$ by 
\begin{align}\label{eq:rib}
  \rib_{V(\lambda)}=q^{(\lambda,\lambda+2\rho)}\id_{V(\lambda)}.
\end{align}
By construction $\rib$ is a central element in $\sU$. In \cite{a-Drin90-almost} Drinfeld showed that $\rib$ satisfies the first relation in \eqref{eq:rib-conditions}. The other relations in \eqref{eq:rib-conditions} also hold in $\sU$, and hence $\rib$ is a ribbon element for $\uq$ in a completed sense. This is made more precise in terms of the category $\Oint$. Let $\mathbbm{1}\in Ob(\Oint)$ denote the trivial representation of $\uq$. Recall the notion of a ribbon category from \cite[1.4]{b-Turaev}. The following well-known proposition is a consequence of the properties of the universal R-matrix $\Rmat$ and the twist $v$.
\begin{prop}\label{prop:braidedOint}
  The category $(\Oint,\ot, \mathbbm{1})$ is a braided monoidal category with braiding given by
\begin{align}\label{eq:braiding}
  c=\{c_{V,W}=\flip \circ \Rmat:V\ot W \rightarrow W\ot V\}_{V,W\in Ob(\Oint)}.
\end{align}
Moreover, $(\Oint,\ot, \mathbbm{1}, c, v^{-1})$ has the structure of a ribbon category with twist given by the inverse of the element $\rib\in \sU$ defined by \eqref{eq:rib}.
\end{prop}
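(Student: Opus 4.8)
The plan is to verify that $(\Oint, \ot, \mathbbm{1})$ together with the commutativity constraint $c$ from \eqref{eq:braiding} satisfies the axioms of a braided monoidal category, and then that the twist $\rib^{-1}$ satisfies the compatibility axioms of a ribbon category. Since $\Oint$ is already a monoidal category (the tensor product of finite-dimensional type 1 modules is again such, and $\mathbbm{1}$ is the unit), the work is entirely about the extra structure, and it is essentially a dictionary translation of the algebraic identities recalled above into categorical language. I will emphasize that, because $\Rmat$ lives in the completion $\sU^{(2)}_0 = \End(\For^{(2)})$ rather than in $\uq\ot\uq$, every displayed relation is to be read as an identity of natural transformations, so the naturality squares for $c$ and for $\rib^{-1}$ are automatic.

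First I would check that each $c_{V,W} = \flip\circ\Rmat$ is an isomorphism of $\uq$-modules: this is exactly relation \eqref{eq:RuuopR}, which says $\Rmat\cdot\kow(u) = \kow^{\op}(u)\cdot\Rmat$, so that $\flip\circ\Rmat$ intertwines the action on $V\ot W$ with the action on $W\ot V$; invertibility of $\Rmat$ in $\sU^{(2)}_0$ (stated in the excerpt) gives the inverse. Naturality of $c_{V,W}$ in $V$ and $W$ is immediate since $\Rmat$ is a natural transformation. Next I would verify the two hexagon identities
\begin{align*}
  c_{U, V\ot W} &= (\id_V\ot c_{U,W})(c_{U,V}\ot \id_W), &
  c_{U\ot V, W} &= (c_{U,W}\ot \id_V)(\id_U\ot c_{V,W}),
\end{align*}
which, after composing with the appropriate flips, are precisely the recast forms of Properties (2) and (3) of Definition \ref{def:universalR}, i.e. $(\kow\ot\id)(\Rmat) = \Rmat_{13}\Rmat_{23}$ and $(\id\ot\kow)(\Rmat) = \Rmat_{13}\Rmat_{12}$, which hold in $\sU^{(3)}_0$ by \cite[Proposition 32.2.4]{b-Lusztig94} as quoted above. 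Unit compatibility ($c_{V,\mathbbm{1}} = \id$, $c_{\mathbbm{1},W} = \id$) follows from $(\vep\ot\id)(\Rmat) = (\id\ot\vep)(\Rmat) = 1$. This establishes that $(\Oint,\ot,\mathbbm{1},c)$ is braided.

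For the ribbon structure I would set $\theta_V := \rib_V^{-1} = q^{-(\lambda,\lambda+2\rho)}\id_{V(\lambda)}$ on simples and extend additively; it is a natural automorphism of the identity functor because $\rib$ is central in $\sU$, and on each isotypic component it is a scalar, hence commutes with all morphisms. The two conditions to check are $\theta_{V\ot W} = (c_{W,V}\circ c_{V,W})^{-1}\circ(\theta_V\ot\theta_W)$ — which is exactly the inverse of the first relation in \eqref{eq:rib-conditions}, $\kow(\rib) = (\Rmat_{21}\Rmat)^{-1}\rib\ot\rib$, noting that $c_{W,V}c_{V,W}$ acts as multiplication by $\Rmat_{21}\Rmat$ — together with $\theta_{\mathbbm{1}} = \id$, from $\vep(\rib) = 1$, and compatibility with duality, $\theta_{V^*} = (\theta_V)^*$, from $S(\rib) = \rib$ (combined with the standard fact that in $\Oint$ the left and right duals agree and rigidity is provided by the usual evaluation/coevaluation maps built from $K_{2\rho}$). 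The main obstacle, such as it is, is not any single computation but the bookkeeping of keeping all identities at the level of natural transformations in $\sU^{(n)}_0$ and confirming that the relations in \eqref{eq:rib-conditions}, which Drinfeld and Lusztig establish for $\Rmat$ and $\rib$, indeed transfer verbatim to the completed setting; since the excerpt has already asserted that all three relations in \eqref{eq:rib-conditions} hold in $\sU$, this reduces the proposition to the routine translation sketched above, and I would simply cite \cite[Proposition 32.2.4]{b-Lusztig94}, \cite{a-Drin90-almost}, and \cite[1.4]{b-Turaev} for the three ingredients.
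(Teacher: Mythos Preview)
Your approach is correct and is precisely what the paper has in mind: the paper itself gives no proof, simply asserting that the proposition ``is a consequence of the properties of the universal R-matrix $\Rmat$ and the twist $v$,'' and your sketch unpacks exactly those properties in the expected way.

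One slip to fix: the twist axiom in Turaev's sense \cite[1.4]{b-Turaev} (and as quoted in Remark~\ref{rem:differentK}) is $\theta_{V\ot W}=c_{W,V}c_{V,W}(\theta_V\ot\theta_W)$, \emph{without} the inverse on the double braiding. Inverting \eqref{eq:rib-conditions} with $\theta=\rib^{-1}$ yields $\kow(\rib^{-1})=(\rib^{-1}\ot \rib^{-1})\,\Rmat_{21}\Rmat=\Rmat_{21}\Rmat\,(\rib^{-1}\ot \rib^{-1})$ by centrality of $\rib$, which is the correct axiom, not the formula you wrote. Once this is corrected your verification goes through verbatim.
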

\subsection{Involutive diagram automorphism and equivariantization}\label{sec:equivariantization}
Let $\sigma:I\rightarrow I$  be a diagram automorphism such that $\sigma^2=\id$. We allow the case $\sigma=\id$. The map $\sigma$ induces an involutive Hopf algebra automorphism $\sigma_U:U\rightarrow U$ defined by $\sigma_U(X_i)=X_{\sigma(i)}$ for $X=E,F,K^{\pm 1}$. Let $\langle \sigma \rangle$ be the cyclic group generated by $\sigma$. Consider the semidirect product $U_\sigma=U\rtimes \field(q^{1/d}) \langle \sigma \rangle$ where the commutation relation is given by
\begin{align*}
  \sigma u = \sigma_U(u) \sigma \qquad \mbox{for all $u\in U$.}
\end{align*}
The algebra $U_\sigma$ is a Hopf algebra with $\kow(\sigma)=\sigma\ot \sigma$.
For any $V\in Ob(\Oint)$ consider the twisted representation $V^\sigma\in Ob(\Oint)$ which coincides with $V$ as a vector space with the action $\lact_\sigma$ given by
\begin{align*}
  u\lact_\sigma v = \sigma_U(u)v \qquad \mbox{for all $u\in U$, $v\in V$.}
\end{align*}
If $\sigma \neq \id$, finite dimensional representations of $U_\sigma$ which are in $\Oint$ when restricted to $U$ can be written as pairs $(V,f)$ where $V\in Ob(\Oint)$ and $f:V\rightarrow V^\sigma$ is a $U$-module isomorphism such that $f^2=\id$. A $U_\sigma$-module homomorphism $(V,f)\rightarrow (W,g)$ consists of a $U$-module homomorphism $\varphi:V\rightarrow W$ such that $\varphi \circ g = f\circ \varphi$. The category $\Oint^{\langle \sigma\rangle}$ of $U_\sigma$-modules which belong to $\Oint$ when restricted to $U$ is hence nothing but the $\langle \sigma\rangle$-equivariantization of $\Oint$, see \cite[Section 4]{a-DGNO10}. This remains true if $\sigma=\id$. Note that in this case $\Oint^{\langle \sigma\rangle}=\Oint$ has objects $(V,\id)$. The category $\Oint^{\langle\sigma \rangle}$ retains all structure from $\Oint$. Indeed, tensor products are defined by $(V,f)\ot(W,g)=(W\ot V,f\ot g)$ and the trivial representation $\mathbbm{1}$ extends to $U_\sigma$. The braiding defined by \eqref{eq:braiding} and the twist $v^{-1}$ defined by \eqref{eq:rib} also survive under equivariantization.
\begin{cor}\label{cor:Os-rib}
  The datum $(\Oint^{\langle\sigma\rangle},\ot,\mathbbm{1},c,v^{-1})$ has the structure of a ribbon category. 
\end{cor}  
If $\sigma \neq \id$ then there are three types of irreducible $U_\sigma$-modules in $\Oint^{\langle\sigma\rangle}$. Let $\lambda\in P^+$ with $\sigma(\lambda)=\lambda$ and let $v_\lambda\in V(\lambda)$ be a highest weight vector. Then the $\Uq$-module structure on $V(\lambda)$ can be extended to a $U_\sigma$-module structure by $\sigma(v_\lambda)=v_\lambda$ or $\sigma(v_\lambda)=-v_\lambda$. We denote the corresponding irreducible $U_\sigma$-modules by $V(\lambda)_+$ and $V(\lambda)_-$, respectively. If $\mu\in P^+$ and $\sigma(\mu)\neq \mu$ then $W(\mu)=V(\mu)\oplus V(\mu)^\sigma\cong V(\mu)\oplus V(\sigma(\mu))$ has the structure of an irreducible $U_\sigma$ module with $\sigma((v,w))=(w,v)$. Observe that $W(\mu)\cong\mathrm{Ind}_U^{U_\sigma}V(\mu)$ if $\sigma(\mu)\neq \mu$ while $\mathrm{Ind}_U^{U_\sigma}V(\lambda)\cong V(\lambda)_+ \oplus V(\lambda)_-$ if $\sigma(\lambda)=\lambda$. To avoid double counting, let $P^+_\sigma$ denote a set of representatives of the $\sigma$-orbits in $P^+$. Observe that $W(\sigma(\mu))\cong W(\mu)$ if $\sigma(\mu)\neq \mu$. We summarize the structure of the equivariantization $\Oint^{\langle \sigma \rangle}$ in the following lemma.
\begin{lem}\label{lem:Os-ss}
  The category $\Oint^{\langle \sigma \rangle}$ is semisimple. If $\sigma \neq \id$ then any irreducible object in $\Oint^{\langle \sigma\rangle}$ is isomorphic to one of $V(\lambda)_+, V(\lambda)_-$ or $W(\mu)$ for some $\lambda, \mu\in P^+_\sigma$ with $\sigma(\lambda)=\lambda$ and $\sigma(\mu)\neq \mu$.
\end{lem}
\begin{proof}
  Assume that $\sigma\neq \id$ and $V\in Ob(\Oint^{\langle \sigma\rangle})$. As a $U$-module $V$ decomposes into a direct sum
  \begin{align*}
     V\cong \bigoplus_{\lambda\in P^+_\sigma  \atop \sigma(\lambda)=\lambda} V(\lambda)^{a_\lambda} \bigoplus_{\mu\in P^+_\sigma\atop \sigma(\mu)\neq \mu} \big(V(\mu)\oplus V(\sigma(\mu))\big)^{a_\mu}
  \end{align*}
  for some $a_\lambda,a_\mu\in \N_0$. For any $\lambda\in P^+$ with $\sigma(\lambda)=\lambda$ the weight space of highest weight $\lambda$ in $V(\lambda)^{a_\lambda}$ decomposes into $\pm 1$ eigenspaces under the action of $\sigma$. This shows that the component $V(\lambda)^{a_\lambda}$ is isomorphic to a direct sum of copies of $V(\lambda)_+$ and $V(\lambda)_-$. Similarly, for $\mu\in P^+_\sigma$ with $\sigma(\mu)\neq \mu$ one gets $\sigma(V(\mu)^{a_\mu})\cong V(\sigma(\mu))^{a_\mu}$. This implies that the component $\big(V(\mu)\oplus V(\sigma(\mu))\big)^{a_\mu}$ is isomorphic to $W(\mu)^{a_\mu}$ as a $U_\sigma$-module.
\end{proof}  
\begin{rema}\label{rem:Vlp}
  To unify notation we set $V(\lambda)_+=V(\lambda)$ in the case $\sigma=\id$.
\end{rema}  
We also extend the completion $\sU$ to the equivariantization. Let $\cF or_\sigma:\Oint^{\langle \sigma\rangle}\rightarrow \cV ect$ be the forgetful functor and define $\sU_{\sigma}=\End(\cF or_\sigma)$. By construction $\sU_{\sigma}$ contains the element $(f:V\rightarrow V\,|\,(V,f)\in Ob(\Oint^{\langle \sigma\rangle}))$ which by abuse of notation we again denote by $\sigma$. Moreover, $\sU$ and $\Uc_\sigma=\Uc\rtimes \field(q^{1/d})\langle\sigma \rangle$ are subalgebras of $\sU_{\sigma}$.

In the following we will often write objects in $\Oint^{\langle\sigma\rangle}$ as $V$ and imply that the map $f:V\rightarrow V^\sigma$ is understood.
Similarly to \eqref{eq:Un-def}, for any $n\in\N$ we consider the functor
\begin{align}\label{eq:Uns-def}
  \For^{(n)}_\sigma:\underbrace{\Oint^{\langle \sigma\rangle}\times \dots \times \Oint^{\langle \sigma\rangle}}_{\text{$n$-factors}} \rightarrow {\mathcal V}ect, \qquad (V_1,\dots,V_n)\mapsto V_1\ot \cdots \ot V_n.
\end{align}
and set $\sU^{(n)}_{0,\sigma}=\End(\For^{(n)}_\sigma)$. Again we have an inclusion $\sU^{(n)}_0\subset \sU^{(n)}_{0,\sigma}$. As in Section \ref{sec:qg-notation}, the coproduct extends to an algebra homomorphism $\kow:\sU_{\sigma}\rightarrow \sU_{0,\sigma}^{(2)}$.
\subsection{Quantum symmetric pairs}\label{sec:qsp}
Quantum symmetric pair coideal subalgebras of $\uq=\uqg$ were introduced by G.~Letzter in \cite{a-Letzter99a}. Here we briefly recall their construction following \cite{a-Kolb14}. See also \cite[Section 5]{a-BalaKolb15p} for a slightly more detailed overview in the conventions of the present paper. Let $X\subset I$ be a proper subset and let $\tau:I\rightarrow I$ be an involutive diagram automorphism with $\tau(X)=X$. Assume that $(X,\tau)$ satisfies the properties of an admissible pair (or Satake diagram) given in \cite[Definition 2.3]{a-Kolb14}. Let $\theta=\theta(X,\tau):\gfrak\rightarrow \gfrak$ denote the corresponding involutive Lie algebra automorphism defined in \cite[(2.8)]{a-Kolb14}, \cite[Theorem 5.3]{a-BalaKolb15p}, and let $\kfrak=\{x\in \gfrak\,|\,\theta(x)=x\}$ denote the corresponding fixed Lie subalgebra. The involution $\theta$ leaves the Cartan subalgebra $\hfrak$ invariant. The dual map $\Theta:\hfrak^\ast\rightarrow \hfrak^\ast$ is given by
\begin{align}\label{eq:Theta}
  \Theta=-w_X \circ \tau
\end{align}
where $w_X$ denotes the longest element in the parabolic subgroup $W_X$ of the Weyl group $W$. The involutive automorphism $\theta$ can be deformed to an algebra automorphism $\theta_q:\uq\rightarrow \uq$, see \cite[Definition 4.3]{a-Kolb14} for details. 

Let $\cM_X=U_q(\gfrak_X)$ denote the Hopf subalgebra of $\uq$ generated by the elements $E_i, F_i, K_i$, and $K_i^{-1}$ for all $i\in X$. Let $\Uc^0_\Theta$ denote the subalgebra of $\Uc^0$ generated by all $K_\mu$ with $\mu\in P$ and $\Theta(\mu)=\mu$.

Quantum symmetric pair coideal subalgebras depend on a choice of parameters $\bc=(c_i)_{i\in I\setminus X}$ and $\bs=(s_i)_{i\in I\setminus X}$ with $c_i\in \field(q^{1/d})^\times$ and $s_i\in \field(q^{1/d})$. These parameters are subject to certain restrictions. More specifically, one has $\bc\in \cC$ with
\begin{align*}
  \cC=\{ \mathbf{c}\in (\mathbb{K}(q^{1/d})^\times)^{I\setminus X} | c_i=c_{\tau(i)} \textrm{ if } \tau(i)\ne i \textrm{ and } (\alpha_i,\Theta(\alpha_i))=0 \}, 
\end{align*}
and $\bs\in \cS$ for a parameter set $\cS$ which is defined in \cite[(5.11)]{a-Kolb14}, see also \cite[5.7]{a-BalaKolb15p}, \cite[Remark 3.3]{a-BalaKolb15}.
With these notations we are ready to define quantum symmetric pair coideal subalgebras inside the simply connected form of quantized enveloping algebras. 
\begin{defi}
  \label{def:qsp}
Let $(X,\tau)$ be an admissible pair, $\bc=(c_i)_{i\in I\setminus X}\in \cC$, and $\bs=(s_i)_{i\in I\setminus X}\in \cS$. The quantum symmetric pair coideal subalgebra $\coid=\coid(X,\tau)$ is the subalgebra of $\Uc$ generated by $\mathcal{M}_X$, $\Uc^0_\Theta{}$, and the elements 
\begin{align}
  B_i &= F_i + c_i \theta_q(F_iK_i) K_i^{-1} + s_i K_i ^{-1} \label{eq:Bi-def} 
\end{align}
for all $i \in I\setminus X$.
\end{defi}
Let $\rho_X$ denote the half sum of positive roots of the semisimple Lie algebra $\gfrak_X$.
All through this paper we assume that the parameters $\bc=(c_i)_{i\in I\setminus X}$ satisfy the condition
\begin{align}\label{eq:c-bar-condition}
  c_{\tau(i)}= q^{(\alpha_i,\Theta(\alpha_i)-2\rho_X)}\overline{c_i}^U  \qquad \mbox{for all $i\in I\setminus X$.}
\end{align}
In this case it was proved in \cite[Theorem 3.11]{a-BalaKolb15} that there exists an involutive $\field$-algebra automorphism $\barB:B_{\bc,\bs}\rightarrow B_{\bc,\bs}$, $x\mapsto \overline{x}^B$ such that
\begin{align}
		\overline{x}^B=\overline{x}^U \textrm{ for all }  x \in \mathcal{M}_X \Uc^0_\Theta{}, \qquad  	  
		\overline{B_i}^B&=B_i \textrm{ for all } i \in I\setminus X. \label{eq:defbarBcs}
\end{align} 
In particular, $\overline{q^{1/d}}^B=q^{-1/d}$. In a similar way as the quasi R-matrix provides an intertwiner between two bar-involutions on $\kow(\uq)$, there exists an intertwiner for the two involutions $\barU$ and $\barB$ on $\Bcs$. More explicitly, assume that the parameters $\bs=(s_i)_{i\in I\setminus X}$ satisfy
\begin{align}\label{eq:s-X-condition}
  \overline{s_i}^U=s_i \qquad \mbox{for all $i\in I\setminus X$.}
\end{align}
In this case it was proved in \cite[Theorem 6.10]{a-BalaKolb15p} that there exists a uniquely determined element $\Xfrak=\sum_{\mu\in Q^+}\Xfrak_\mu\in \prod_{\mu\in Q^+} U^+_\mu$ with $\Xfrak_0=1$ and $\Xfrak_\mu\in U^+_\mu$ such that the equality
\begin{align}\label{eq:Xintertwine}
  \overline{x}^B\,\Xfrak = \Xfrak\, \overline{x}^U
\end{align}
holds in $\sU$ for all $x\in \Bcs$. For quantum symmetric pairs of type AIII/IV with $X=\emptyset$, the existence of $\Xfrak$ was first noted by H.~Bao and W.~Wang in \cite{a-BaoWang13p}. The element $\Xfrak$ is called the 1-tensor quasi K-matrix for the quantum symmetric pair coideal subalgebra $\Bcs$.

Following a program outlined in \cite{a-BaoWang13p}, the quasi K-matrix $\Xfrak$ was used in \cite{a-BalaKolb15p} to construct an element $\cK\in \sU$ which satisfies properties (K$1'$) and (K$3'$) from Section \ref{sec:quasi-coid} up to a twist and completion. To fix notation, we briefly recall the construction of $\cK$. We work with a suitable equivariantization of $\Oint$ to avoid the twist. Choose a group homomorphism $\gamma:P\rightarrow \field(q^{1/d})^\times$ such that $\gamma(\alpha_i)=c_i s(\tau(i))$ for all $i\in I\setminus X$ and $\gamma(\alpha_i)=0$ if $i\in X$, see \cite[Section 8.4]{a-BalaKolb15p}. Here $s(\tau(i))\in \field^\times$ is a constant which enters the construction of the involutive automorphism $\theta(X,\tau)$ of $\gfrak$, see \cite[Section 5.1]{a-BalaKolb15p}. For any $\lambda \in P$ write $\lambda^+=(\lambda+\Theta(\lambda))/2$ and $\lambdatil=(\lambda-\Theta(\lambda))/2$. With this notation define a function $\xi:P\rightarrow \field(q^{1/d})^\times$ by
\begin{align*}
  \xi(\lambda) = \gamma(\lambda) q^{-(\lambda^+,\lambda^+) + \sum_{k\in I} (\alphatil_k,\alphatil_k) \lambda(\varpi_k^\vee)}
\end{align*}
where $\varpi_i^\vee$ for $i\in I$ denote the fundamental coweights, see also \cite[Remark 8.1]{a-BalaKolb15p}. By \cite[Lemma 8.2]{a-BalaKolb15p}, the function $\xi$ satisfies the relation
\begin{align}\label{eq:ximunu}
  \xi(\mu+\nu) = \xi(\mu) \xi(\nu) q^{-(\mu+\Theta(\mu),\nu)} \qquad \mbox{for all $\mu,\nu\in P$.}
\end{align}
We consider the map $\xi$ as an element in $\sU$, see \cite[Example 3.3]{a-BalaKolb15p}. Then \eqref{eq:ximunu} implies that
\begin{align}\label{eq:kow-xi}
  \kow(\xi) = \xi\ot \xi \cdot \kappa^- \cdot \kappa^{-\Theta}.
\end{align}
Let $w_0$ denote the longest element in the Weyl group $W$. Define a diagram automorphism $\tau_0:I\rightarrow I$ by the property that $w_0(\alpha_i)=-\alpha_{\tau_0(i)}$ for all $i\in I$. We set $\sigma=\tau\tau_0$.
In addition to \eqref{eq:c-bar-condition} we need to make the assumption that the parameters ${\bf c}\in \cC$ satisfy the relation
\begin{align}\label{eq:c-tau-condition}
  c_{\sigma(i)}=c_{i} \qquad \mbox{for all $i\in I\setminus X$},
\end{align}
see \cite[7.1 and Remark 7.2]{a-BalaKolb15p}. In this case $\sigma_U:U\rightarrow U$ restricts to an involutive algebra automorphism of $B_\bc$. Now consider the algebra $\sU_\sigma$ defined in Section \ref{sec:equivariantization}. Recall that $w_X$ denotes the longest element in the parabolic subgroup $W_X$. The 1-tensor universal K-matrix for $\Bcs$ is defined by
\begin{align}\label{eq:K-def}
  \cK= \Xfrak \xi T_{w_X}^{-1} T_{w_0}^{-1} \sigma \in \sU_\sigma.
\end{align}
The following theorem summarizes the main result of \cite{a-BalaKolb15p} in the case where $\gfrak$ is semisimple.
\begin{thm}[{\upshape \cite[Corollary 7.7, Theorem 9.5]{a-BalaKolb15p}}]\label{thm:BK}
  The element $\cK\in \sU_\sigma$ defined by \eqref{eq:K-def} has the following properties:
  \begin{align}
    \cK \,b &= b\,\cK \qquad \mbox{for all $b\in \Bcs$,}\label{eq:KbttbK}\\
    \kow(\cK) &=  \Rmat_{21}\cdot (1\ot\cK)\cdot \Rmat \cdot(\cK\ot 1). \label{eq:kowK-qsp}
  \end{align}
\end{thm}
Using relation  \eqref{eq:RuuopR} and the fact that $\sigma\ot \sigma(\Rmat)=\Rmat$ one obtains 
\begin{align}\label{eq:kowK-qsp2}
  \kow(\cK) &=  (\cK\ot 1)\cdot\Rmat_{21}\cdot (1\ot\cK)\cdot \Rmat
\end{align}
which provides the reflection equation by equating the right hand sides of \eqref{eq:kowK-qsp} and \eqref{eq:kowK-qsp2}.
The above theorem shows that the 1-tensor universal K-matrix $\cK$ satisfies properties (K$1'$) and (K$3'$) from Section \ref{sec:quasi-coid}. In the remainder of Section \ref{sec:univ-K-qsp} we show that $\cK$ also satisfies property (K$2'$).
\begin{rema}
  In \cite[Corollary 7.7]{a-BalaKolb15p} the 1-tensor universal K-matrix was defined by $\cK=\Xfrak \xi T_{w_X}^{-1} T_{w_0}^{-1}$ omitting the factor $\sigma$ in \eqref{eq:K-def}. This lead to twisted versions of \eqref{eq:KbttbK} and \eqref{eq:kowK-qsp} and a twisted reflection equation which involved the automorphism $\sigma$. Working in $\sU_\sigma$ and defining $\cK$ by \eqref{eq:K-def} we obtain the untwisted relations as in Section \ref{sec:BMC}. 
\end{rema}  
\subsection{Definition of $R^\theta$ and first properties}
As a preparation we first consider an element $R^\theta$ which was introduced in the special case of quantum symmetric pairs of type AIII/IV with $X=\emptyset$ in \cite[Section 3]{a-BaoWang13p}. Define
\begin{align}\label{eq:Rtheta-def}
  R^\theta=\kow(\Xfrak) \cdot R \cdot(\Xfrak^{-1} \ot 1)\in \sU^{(2)}_0.
\end{align}
In \cite{a-BaoWang13p} the element $R^\theta$ is called the \textit{quasi R-matrix} for $\Bcs$. This terminology  stems from the following intertwiner property which is similar to the intertwiner property of the quasi R-matrix for $\uq$ in Equation \eqref{eq:kowR=Rkow}. Here we prefer to call $R^\theta$ the 2-tensor quasi K-matrix for $\Bcs$.
\begin{prop}[{\upshape \cite[Proposition 3.2]{a-BaoWang13p}}] \label{prop:quasiRB}
For any $b\in \Bcs$ one has
  \begin{align*}
    \kow(\overline{b}^B) \cdot R^\theta = R^\theta \cdot (\ophan^B\ot \ophan^U)\circ \kow(b)
  \end{align*}
  in $\sU^{(2)}_0$.
\end{prop}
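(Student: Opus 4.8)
The plan is to deduce the intertwiner property for $R^\theta$ from the defining equation \eqref{eq:Rtheta-def} together with the two intertwiner relations we already have: the characterizing relation \eqref{eq:kowR=Rkow} for the quasi $R$-matrix $R$, and the relation \eqref{eq:Xintertwine} defining the quasi $K$-matrix $\Xfrak$. The key observation is that all three objects $\kow(\Xfrak)$, $R$, and $\Xfrak^{-1}\ot 1$ are built to convert one bar involution into another, so composing them should telescope the bar involutions in exactly the right way.

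First I would rewrite $R^\theta=\kow(\Xfrak)\cdot R\cdot(\Xfrak^{-1}\ot 1)$ and compute $\kow(\overline{b}^B)\cdot R^\theta$ for $b\in\Bcs$. Since $\Bcs$ is a right coideal subalgebra, $\kow(b)$ and hence $\kow(\overline{b}^B)$ lies in $\Bcs\ot\uq$; in fact $\kow$ intertwines $\barB$ on $\Bcs$ with $\barB\ot\barU$ applied to $\kow(b)$ in a completed sense, which is precisely what \eqref{eq:Xintertwine} (applied to $\kow(\Xfrak)=\kow$ of the quasi $K$-matrix, i.e.\ using that $\Xfrak$ intertwines $\barB$ and $\barU$ on all of $\Bcs$) gives us: $\overline{x}^B\,\Xfrak=\Xfrak\,\overline{x}^U$ for $x\in\Bcs$. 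Applying $\kow$ to this identity, $\kow(\overline{x}^B)\cdot\kow(\Xfrak)=\kow(\Xfrak)\cdot\kow(\overline{x}^U)$, and then feeding in $x=b$ together with the description of $\barB$ and $\barU$ under $\kow$, I would move $\kow(\overline{b}^B)$ past $\kow(\Xfrak)$ at the cost of replacing it by $\kow(\Xfrak)\cdot(\ophan^B\ot\ophan^U)$ of something. The middle factor $R$ is then handled by \eqref{eq:kowR=Rkow}, which pushes a diagonal bar involution $\barU\ot\barU$ past $R$ at the cost of replacing $\kow(\overline{u}^U)$ by $R\cdot(\barU\ot\barU)(\kow(u))$. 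Finally the rightmost factor $\Xfrak^{-1}\ot 1$ is dealt with by the inverse form of \eqref{eq:Xintertwine}, namely $\Xfrak^{-1}\,\overline{x}^B=\overline{x}^U\,\Xfrak^{-1}$ for $x\in\Bcs$, applied in the first tensor leg.

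Concretely: starting from $\kow(\overline{b}^B)\cdot\kow(\Xfrak)\cdot R\cdot(\Xfrak^{-1}\ot 1)$, I first use $\kow$ of \eqref{eq:Xintertwine} to rewrite $\kow(\overline{b}^B)\cdot\kow(\Xfrak)=\kow(\Xfrak)\cdot\kow(\overline{b}^U)$, being careful that here $\overline{b}^U$ means the image of $b$ under the bar involution of $\uq$, and $\kow(\overline{b}^U)=(\barU\ot\barU)(\kow(b))$ because $\barU$ is an algebra automorphism compatible with $\kow$ after composing with the flip-free conventions of \eqref{eq:defbarUqg}—this is where one must track that $\barU$ is only an anti-automorphism of the coalgebra in general, but on the coideal $\Bcs$ the relevant relation is exactly \eqref{eq:Xintertwine}. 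Then \eqref{eq:kowR=Rkow} converts $\kow(\Xfrak)\cdot\kow(\overline{b}^U)\cdot R$ into $\kow(\Xfrak)\cdot R\cdot(\barU\ot\barU)(\kow(b))$. Lastly, $(\barU\ot\barU)\kow(b)=(\ophan^U\ot\ophan^U)\kow(b)$ and, since $b\in\Bcs$ so that $\kow(b)\in\Bcs\ot\uq$, the first-leg relation $\Xfrak^{-1}\,\overline{x}^B=\overline{x}^U\,\Xfrak^{-1}$ lets me commute $(\Xfrak^{-1}\ot 1)$ back to the right of $(\ophan^B\ot\ophan^U)\kow(b)$, yielding $\kow(\Xfrak)\cdot R\cdot(\Xfrak^{-1}\ot 1)\cdot(\ophan^B\ot\ophan^U)\kow(b)=R^\theta\cdot(\ophan^B\ot\ophan^U)\circ\kow(b)$, as desired.

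The main obstacle I anticipate is bookkeeping around the bar involutions: $\barU$ is an $\field$-algebra automorphism but reverses coproducts up to the flip (there is a well-known sign/flip subtlety: $\kow\circ\barU=(\barU\ot\barU)\circ\kow^{\op}$ on $\uq$), so one must check that the flip discrepancies introduced by $\barU$ in the first and third factors cancel, or else that working inside $\Bcs\ot\uq$ the only identity one ever uses is \eqref{eq:Xintertwine} and its variants, sidestepping $\kow\circ\barU$ on $\uq$ entirely. The cleanest route is probably to apply $\barB$ (extended to $\sU^{(2)}_0$ via $\barB\ot\barU$, which makes sense on $\Bcs\ot\uq$) to the proposed identity and reduce to the known relation $\kow(b)\cdot R^\theta$ behaviour, or simply to verify the identity on weight vectors in $M\ot V$ for $M\in Ob(\Mcs)$, $V\in Ob(\Oint)$, where everything converges and the manipulations above are literal. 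A secondary, purely formal point is that $R$, $\Xfrak$, and hence $R^\theta$ live in completions, so each step should be read as an equality in $\sU^{(2)}_0$ of natural transformations evaluated on arbitrary objects, which is exactly the framework set up in \cite{a-BalaKolb15p} and poses no real difficulty.
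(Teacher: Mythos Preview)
Your approach is correct and is exactly the paper's: both arguments use (i) $\kow$ applied to the intertwiner relation \eqref{eq:Xintertwine}, (ii) the quasi $R$-matrix relation \eqref{eq:kowR=Rkow}, and (iii) relation \eqref{eq:Xintertwine} in the first tensor leg (using that $\kow(b)\in\Bcs\ot\uq$). The paper runs the same three moves from right to left in four lines, while you run them left to right.

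One correction to your exposition: the aside ``$\kow(\overline{b}^U)=(\barU\ot\barU)(\kow(b))$'' is false and is precisely the flip subtlety you later worry about, but you never actually use it---the step you do use, $\kow(\overline{b}^U)\cdot R = R\cdot(\barU\ot\barU)(\kow(b))$, is literally \eqref{eq:kowR=Rkow} and already absorbs the flip. Drop that aside and the hedging paragraph about $\kow\circ\barU$; no obstacle remains.
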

\begin{proof}
   Let $b\in \Bcs$. Using the intertwiner relations \eqref{eq:kowR=Rkow} and \eqref{eq:Xintertwine} one calculates
  \begin{align*}
    R^\theta \cdot (\ophan^B\ot \ophan^U)\circ \kow(b) 
    &= \kow(\Xfrak) \cdot R \cdot (\Xfrak^{-1}\ot 1)\cdot (\ophan^B\ot \ophan^U)\circ \kow(b)\\
    &= \kow(\Xfrak) \cdot R \cdot(\ophan^U\ot \ophan^U)\circ \kow(b)\cdot(\Xfrak^{-1}\ot 1)\\
    &=\kow(\Xfrak)\cdot \kow(\overline{b}^U)\cdot R \cdot(\Xfrak^{-1}\ot 1)\\
    &=\kow(\overline{b}^B) \cdot \kow{\Xfrak}\cdot R \cdot (\Xfrak^{-1}\ot 1)
  \end{align*}
  which proves the proposition.
\end{proof}
Observe that $\widehat{\sU\ot U^+}=\prod_{\mu\in Q^+} \sU \otimes U^+_\mu$
is a subalgebra of $\sU^{(2)}_0$. As $\kow(\Xfrak)$, $R$ and $\Xfrak^{-1}\ot 1$ belong to $\widehat{\sU\ot U^+}$ we obtain $R^\theta\in\widehat{\sU\ot U^+}$ from \eqref{eq:Rtheta-def}. Hence we can write formally
\begin{align}\label{eq:Rtheta-sum}
    R^\theta= \sum_{\mu\in Q^+} R^\theta_\mu \qquad \mbox{with $R_\mu^\theta\in \sU\otimes U^+_\mu$.}
  \end{align}
The situation is even better. One has the following result which is a generalization of \cite[Proposition 3.5]{a-BaoWang13p}.
\begin{prop}\label{prop:Rthetamu}
  For any $\mu\in Q^+$ one has $R^\theta_\mu \in B_{\bc,\bs}\otimes U^+_\mu$.
\end{prop}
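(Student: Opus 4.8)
The plan is to establish the stronger membership $R^\theta_\mu \in \Bcs \otimes U^+_\mu$ by exploiting the intertwiner property of $R^\theta$ from Proposition \ref{prop:quasiRB} together with a suitable filtration or weight argument on the first tensor leg. The key observation is that $R^\theta$ intertwines the bar involution $\barB$ on $\Bcs$ (applied diagonally to $\kow(\Bcs)$ via $\ophan^B \otimes \ophan^U$) with the bar involution $\barU$ on $\uqg$; since the generators $B_i$ of $\Bcs$ are bar-invariant, this intertwiner relation becomes a system of recursive equations for the components $R^\theta_\mu$.

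First I would recall that $R^\theta \in \widehat{U \otimes U^+}$ with $R^\theta_0 = 1 \otimes 1$, so the claim holds trivially for $\mu = 0$, and proceed by induction on the height of $\mu \in Q^+$. The inductive step would extract, from the intertwiner identity $\kow(\overline{b}^B) \cdot R^\theta = R^\theta \cdot (\ophan^B \otimes \ophan^U)\kow(b)$ applied to $b = B_i$ for $i \in I\setminus X$ and $b \in \cM_X$, a formula expressing the commutator of $R^\theta_\mu$ with the relevant generators in terms of lower-height components $R^\theta_\nu$ (with $\nu < \mu$), which by induction lie in $\Bcs \otimes U^+_\nu$. One then argues that the first tensor leg of $R^\theta_\mu$, a priori only in $U$, must actually land in $\Bcs$: this is where the precise structure of $\Bcs$ as a right coideal subalgebra enters, presumably via a characterization of $\Bcs$ inside $\Uc$ as the set of elements satisfying certain commutation relations with the $B_i$ and $F_i$, or via the skew-derivations $r_i, {}_ir$ from \eqref{eq:riir} and an analogue for $\Bcs$. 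The coproduct formula \eqref{eq:kowX}, which shows $\kow(\Xfrak) = (\Xfrak\otimes 1) \cdot \RtX \cdot \Xfrak_{K,2}$ with $\RtX \in \widehat{U\otimes U^+}$, combined with the definition \eqref{eq:Rtheta-def}, may also be used directly to unpack $R^\theta$ into pieces whose first legs are manifestly in $\Bcs$.

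The cleanest route is likely the one taken in \cite[Proposition 3.5]{a-BaoWang13p}: compare the two expressions for $\kow(\Xfrak)$ — namely the defining relation that implicitly characterizes $\Xfrak$ through \eqref{eq:Xintertwine}, and the explicit formula \eqref{eq:kowX} — and use the uniqueness of $\Xfrak$ together with the fact that $\Xfrak_\mu \in U^+_\mu$ to pin down the first leg of each $R^\theta_\mu$. Concretely, applying $\kow$ to \eqref{eq:Rtheta-def} and using coassociativity, $(\kow\otimes \id)(R) = R_{13}R_{23}$, and \eqref{eq:kowX} should yield a recursion in which the first leg of $R^\theta_\mu$ is built from the first leg of $R^\theta_\nu$ ($\nu < \mu$, in $\Bcs$ by induction) multiplied by elements of $\cM_X \Uc^0_\Theta$ and the $B_i$ — all of which lie in $\Bcs$.

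The main obstacle I anticipate is the final closure step: showing that the first tensor leg, produced by the recursion as an element of the completion $\sU$ (or of $U$), genuinely lies in the comparatively small subalgebra $\Bcs$ rather than merely in $\Uc$. This requires either a clean ambient characterization of $\Bcs$ (e.g., $\Bcs = \{x \in \Uc^{\text{appropriate grading}} : [x, \text{relevant elements}] \in \ldots\}$) or a careful bookkeeping of the triangular-decomposition weight components appearing in the recursion, ensuring no stray terms outside $\Bcs$ are generated — in particular controlling the $\cM_X$ and $\Uc^0_\Theta$ parts and confirming that the twist by $\tau\tau_0$, $T_{w_0}^{-1}$, $T_{w_X}^{-1}$ entering $\RtX$ does not spoil membership. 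The parameter conditions \eqref{eq:c-bar-condition}, \eqref{eq:s-X-condition}, \eqref{eq:c-tau-condition} will presumably be needed precisely to make this bookkeeping consistent.
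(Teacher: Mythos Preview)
Your plan correctly identifies the inductive framework (induction on height of $\mu$, base case $R^\theta_0=1\otimes 1$, use of the intertwiner property from Proposition~\ref{prop:quasiRB} applied to $b=B_i$) and even mentions the skew derivations $r_i,{}_ir$. But you then flag as the ``main obstacle'' exactly the step that closes the argument, and your proposed resolutions --- an ambient characterization of $\Bcs$ inside $\Uc$, an analogue of skew derivations for $\Bcs$, or unpacking $\kow(\Xfrak)$ via \eqref{eq:kowX} --- are not what is needed and would be considerably harder to carry out.

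The missing idea is purely on the second tensor leg. From the intertwiner relation one extracts the formula
\begin{align*}
  (1\otimes r_i)(R^\theta) = -(q_i-q_i^{-1})\, R^\theta\cdot\big(B_i\otimes 1 + \overline{c_i}\,(\ophan^B\otimes\ophan^U)(M_i)(1\otimes K_i^{-1})\big),
\end{align*}
where $M_i\in \cM_X\Uc^0_\Theta\otimes U^+K_i^{-1}$; the right-hand side involves only components $R^\theta_\nu$ with $\nu<\mu$ and first legs in $\Bcs$, so by induction $(1\otimes r_i)(R^\theta_\mu)\in \Bcs\otimes U^+_{\mu-\alpha_i}$ for every $i\in I$. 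Now write $R^\theta_\mu=\sum_j a_j\otimes u_j$ with the $u_j\in U^+_\mu$ linearly independent and $a_j\in U$ a priori. The key point is that the total skew-derivation map
\begin{align*}
  r:U^+_\mu\longrightarrow \bigoplus_{i\in I}U^+_{\mu-\alpha_i},\qquad u\mapsto \sum_{i}r_i(u)
\end{align*}
is \emph{injective} for $\mu>0$ (this is \cite[Lemma 1.2.15]{b-Lusztig94}). Hence the $r(u_j)$ remain linearly independent, and the containment $\sum_j a_j\otimes r(u_j)\in \Bcs\otimes\bigoplus_i U^+_{\mu-\alpha_i}$ forces each $a_j\in\Bcs$. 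No internal description of $\Bcs$, no $\Bcs$-analogue of the skew derivations, and no direct use of \eqref{eq:kowX} is required; the induction closes by a linear-algebra argument on $U^+$.
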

\begin{proof}
  For any $i\in I$ one has
  \begin{align*}
    \kow(B_i) = B_i \ot K_i^{-1} + 1 \ot F_i + c_i M_i
  \end{align*}
  for some $M_i\in \cM_X \Uc^0_\Theta \ot \sum_{\gamma\ge \alpha_{\tau(i)}} U^+_\gamma K_i^{-1}$, see \cite[Proof of Proposition 5.2]{a-Kolb14}. Hence
  Proposition \ref{prop:quasiRB} implies that
  \begin{align*}
    \big(B_i \ot K_i^{-1} + 1 \ot F_i + c_i M_i\big)\cdot R^\theta = R^\theta \cdot
    \big(B_i \ot K_i + 1 \ot F_i + \overline{c_i} (\ophan^B\ot \ophan^U)(M_i)\big).
  \end{align*}
  Hence property \eqref{eq:riir} of Lusztig's skew derivations $r_i$ and ${}_ir$ leads to the relation
  \begin{align}
    (1\ot r_i) (R^\theta) &= - (q_i-q_i^{-1}) R^\theta \cdot \big(B_i\ot 1 + \overline{c_i} (\ophan^B\ot \ophan^U)(M_i)(1\ot K_i^{-1})\big) \label{eq:riRt}
  \end{align}
  which holds in $\sU^{(2)}_0$. We now show $R^\theta_\mu\in \Bcs\ot U^+_\mu$ by induction on $\hght(\mu)$. By definition of $R^\theta$ we have $R^\theta_0=1\ot 1 \in \Bcs\ot U^+_0$. Now assume that $\mu>0$ and $R^\theta_\nu\in \Bcs\ot U^+_\nu$ for all $\nu<\mu$. Then relation \eqref{eq:riRt} implies that
  \begin{align*}
      (1\ot r_i)(R^\theta_\mu) \in \Bcs\ot U^+_{\mu-\alpha_i}
  \end{align*}
  for all $i\in I$. Write
  \begin{align*}
    R^\theta_\mu = \sum_{j=1}^m a_j\ot u_j
  \end{align*}
  with linearly independent elements $u_j\in U^+_\mu$ and $a_j\in U$. Define a map
  \begin{align*}
    r:U^+_\mu \rightarrow \bigoplus_{i\in I} U^+_{\mu-\alpha_i}, \qquad u\mapsto \sum_{i\in I} r_i(u).
  \end{align*}
  By \cite[Lemma 1.2.15]{b-Lusztig94} the map $r$ is injective if $\mu>0$. Hence 
  \begin{align*}
    (1\ot r)(R^\theta_\mu)= \sum _{j=1}^m a_j\ot r(u_j)\in \Bcs\ot \bigoplus_{i\in I} U^+_{\mu-\alpha_i}
  \end{align*}
  has linearly independent second tensor factors. But this means that $a_j\in \Bcs$ for all $j=1,\dots,m$ and hence $R^\theta_\mu\in \Bcs\ot U^+_\mu$.
\end{proof}
\begin{rema}
  By \cite[Theorem 6.10]{a-BalaKolb15p} the 1-tensor quasi K-matrix $\Xfrak$ exists in the more general setting where $\gfrak$ is a symmetrizable Kac-Moody algebra and $(X,\tau)$ is an admissible pair in the sense of \cite[Definition 2.3]{a-Kolb14}.
  In this setting the 2-tensor quasi $K$-matrix $R^\theta\in \sU^{(2)}_0$ can still be defined by \eqref{eq:Rtheta-def}. Relation \eqref{eq:Rtheta-sum} still holds and the proof of the above proposition applies literally to show that $R^\theta_{\mu}\in \Bcs\ot U^+_\mu$ also in the Kac-Moody case. 
\end{rema}  
\subsection{Completion of $\Bcs\ot \uq^{\ot (n{-}1)}_\sigma$}\label{sec:Bcomplete}
To formulate a version of property (K$2'$) for the element $\cK\in \sU_\sigma$ defined by \eqref{eq:K-def} we need a completion of $\Bcs\ot \uq_\sigma$ inside  $\sU^{(2)}_{0,\sigma}$. More generally, we will define a completion of $\Bcs\ot \uq^{\ot (n{-}1)}_\sigma$ inside $\sU^{(n)}_{0,\sigma}$.

Let $\uq_\sigma^\ast$ denote the linear dual space of $\uq_\sigma$. For any $V\in Ob(\Oint^{\langle\sigma\rangle})$ and any $v\in V$, $f\in V^\ast$ the matrix coefficient $c_{f,v}\in \uq_\sigma^\ast$ is defined by $c_{f,v}(u)=f(uv)$ for all $u\in \uq_\sigma$. Let $\cA_\sigma$ denote the Hopf algebra generated by the matrix coefficients of all representations in $\Oint^{\langle \sigma\rangle}$. We write $\cA=\cA_\id$.
Let $n, m\in \N_0$ with $m\le n$. There exists a uniquely determined bilinear map
\begin{align}\label{eq:contraction}
  \langle\,,\,\rangle_{n,m}: \sU^{(n)}_{0,\sigma} \otimes \cA_\sigma^{\ot m} \rightarrow \sU^{(n-m)}_{0,\sigma}
\end{align}
such that
\begin{align*}
  \langle X,c_{f_1,v_1}\ot \dots&\ot c_{f_m,v_m}\rangle_{n,m}(w)\\ =&\big(\underbrace{\id \ot \dots \ot \id}_{\text{$(n{-}m)$ times}}\ot f_1\ot \dots\ot f_m\big)\big(X(w\ot v_1 \ot \dots \ot v_m)\big)
\end{align*}
for all $X\in \sU^{(n)}_{0,\sigma}$, $w\in W_1\ot \dots \ot W_{n-m}$, and $v_i\in V_i$, $f_i\in V_i^\ast$ where $V_i, W_j\in Ob(\Oint^{\langle\sigma \rangle})$ for $i=1,\dots, m$ and $j=1,\dots,n-m$. We call $\langle\,,\,\rangle_{n,m}$ the contraction pairing between $\sU^{(n)}_{0,\sigma}$ and $\cA^{\ot m}_\sigma$. We use Sweedler notation $\kow(a)=a_{(1)}\ot a_{(2)}$ for the tensor coalgebra structure on $\cA^{\ot m}_\sigma$. The contraction pairing respects the algebra structure on $\sU^{(n)}_{0,\sigma}$ in the sense that
\begin{align}\label{eq:hopf-contraction}
  \langle X Y,a\rangle_{n,m}= \langle X, a_{(1)}\rangle_{n,m} \langle Y, a_{(2)}\rangle_{n,m}  
\end{align}
for all $X,Y\in \sU^{(n)}_{0,\sigma}$ and all $a\in \cA^{\ot m}_\sigma$. 

With the help of the contraction pairing we are now ready to define the desired completion of $\Bcs\ot \uq^{\ot (n{-}1)}_\sigma$. For any $n\in \N$ with $n\ge 2$ define
\begin{align}\label{eq:BU-complete}
  \sB^{(n)}=\{X\in \sU^{(n)}_{0,\sigma}\,|\,\langle X,a\rangle_{n,n{-}1}\in \Bcs \mbox{ for all $a\in \cA_\sigma^{\ot (n{-}1)}$}\}
\end{align}
and set $\sB^{(1)}=\Bcs$.
Formula \eqref{eq:hopf-contraction} implies that $\sB^{(n)}$ is a subalgebra of $\sU^{(n)}_{0,\sigma}$ for any $n\in \N$. 
By definition $\Bcs\ot \sU_\sigma \subset \sB^{(2)}$, but the algebra $\sB^{(2)}$ is strictly bigger than $\Bcs\ot \sU_\sigma$. 
\begin{eg}\label{eg:kkT}
  Recall the notation $\kappa^f$ from Example \ref{eg:kappa} and the map $\Theta:P \rightarrow P$ defined by \eqref{eq:Theta}.
Consider the element $\kappa\cdot\kappa^{\Theta}\in \sU^{(2)}$. We claim that $\kappa\cdot\kappa^{\Theta}\in \sB^{(2)}$. Indeed, for any $V,W \in Ob(\Oint^{\langle \sigma\rangle})$ and any weight vector $v\in V$ of weight $\lambda$ one has
\begin{align*}
  \kappa \cdot \kappa^\Theta\big|_{W\ot v}=(K_{\lambda+\Theta(\lambda)}\ot \id) \big|_{W\ot v}.
\end{align*}
This implies that
\begin{align*}
  \langle \kappa \cdot \kappa^\Theta, c_{f,v}\rangle_{2,1}=f(v) K_{\lambda+\Theta(\lambda)}\in \Bcs  
\end{align*}
for any $f\in V^\ast$ and any weight vector $v\in V$ of weight $\lambda$. As any element of $\cA_\sigma$ is a linear combination of matrix coefficients $c_{f,v}$ for some weight vectors $v$, one obtains that $\kappa\cdot \kappa^\Theta\in \sB^{(2)}$.
\end{eg}  
\begin{eg}\label{eg:RthetainB2}
 The element $R^\theta$ defined by \eqref{eq:Rtheta-def} satisfies $R^\theta\in \sB^{(2)}$. This is a consequence of Proposition \ref{prop:Rthetamu}.
\end{eg}
Let $k,n\in \N$ with $1\le k\le n$. The application of the coproduct $\kow$ to the $k$-th tensor factor of $\uq^{\ot n}$ extends to an algebra homomorphism 
\begin{align}\label{eq:kowknU}
  \kow^{\sU,\sigma}_{n,k}:\sU^{(n)}_{0,\sigma}\longrightarrow \sU^{(n+1)}_{0,\sigma}
\end{align}
defined by
\begin{align*}
  \big(\kow^{\sU,\sigma}_{n,k}(\varphi)\big)_{V_1,V_2,\dots,V_{n+1}}= 
\varphi_{\raisebox{-1pt}{\tiny $V_{1},\dots,V_{k-1},V_{k}{\ot} V_{k+1},V_{k+2},\dots,V_{n+1}$}}.
\end{align*} 
The constructions imply that
\begin{align}\label{eq:kowB-inB2}
  \kow^{\sU,\sigma}_{n,k}(\sB^{(n)}) \subset \sB^{(n{+}1)}.
\end{align}
The maps $\kow^{\sU,\sigma}_{n,k}$ will be used in Section \ref{sec:braid-mod-cat}.

We end this section by considering the contraction pairing \eqref{eq:contraction} in the special case $n=2$, $m=1$. The following will only be used in Section \ref{sec:center}. For any  $X\in \sU^{(2)}_{0,\sigma}$ define a linear map 
\begin{align*}
  \ltil_X:\cA_\sigma \rightarrow \sU_\sigma, \qquad \ltil(a)=\langle X,a\rangle_{2,1}.
\end{align*}
We call $\ltil_X$ the generalized $l$-operator corresponding to the element $X\in \sU^{(2)}_{0,\sigma}$.  
\begin{rema}\label{rem:ltil}
  For $X=\Rmat\Rmat_{21}$ and $\sigma=\id$ the generalized $l$-operator $\ltil_X$ coincides with the operator $\ltil:\cA\rightarrow \chU$ defined in \cite[10.1.3]{b-KS}, see also \cite[1.3]{a-KolbStok09}. The operator $\ltil$ has a second variant $\ltil'=\ltil_Y$ for $Y=\Rmat_{21}^{-1} \Rmat^{-1}$, which will be more relevant in our setting, see Equation \eqref{eq:inadUl} in Proposition \ref{prop:Z-construct}.
\end{rema}
Property \eqref{eq:hopf-contraction} of the contraction pairing implies that
\begin{align}\label{eq:ltilXY}
  \ltil_{XY}(a)=\ltil_X(a_{(1)})\,\ltil_Y(a_{(2)}) \qquad \mbox{for all $X,Y\in \sU^{(2)}_{0,\sigma}$, $a\in \cA_\sigma$.}
\end{align}
For later reference we record that the relations $\Rmat^{-1}=(S\ot \id)(\Rmat)$ and $(S\ot S)(\Rmat)=\Rmat$ imply that
\begin{align}\label{eq:ltil-rels}
    \ltil_{\Rmat_{21}}(S(a))=\ltil_{\Rmat_{21}^{-1}}(a), \qquad \ltil_\Rmat(S^{-1}(a))=\ltil_{\Rmat^{-1}}(a)
\end{align}
for any $a\in \cA_\sigma$.
\subsection{The 2-tensor universal $K$-matrix for $\Bcs$}
Inspired by Lemma \ref{lem:KcalK} we now define an element $\scrK\in\sU^{(2)}_{0,\sigma}$ by
\begin{align}\label{eq:KBU-def}
  \scrK=\Rmat_{21}\cdot(1\ot \cK)\cdot\Rmat
\end{align}
where $\cK$ is the 1-tensor universal $K$-matrix defined by \eqref{eq:K-def}.
Formulas \eqref{eq:kowK-qsp} and \eqref{eq:kowK-qsp2} for the coproduct of $\cK$ imply that
\begin{align}\label{eq:KinK}
  \scrK=\kow(\cK)\cdot(\cK^{-1}\ot 1)=(\cK^{-1}\ot 1)\cdot\kow(\cK).
\end{align}
The following proposition provides the main ingredient needed to show that $\Kmat \in \sB^{(2)}$. Define $R_X^{TT}=(T_{w_X}^{-1}\ot T_{w_X}^{-1})(R_X)$ to simplify notation.
\begin{prop}\label{prop:KinBU}
  The element $\scrK$ defined by {\upshape\eqref{eq:KBU-def}} satisfies the relation
  \begin{align}
     \scrK &=R^\theta\cdot R_X^{TT}\cdot \kappa^-\cdot\kappa^{-\Theta}\cdot \big( 1 \ot \xi  T_{w_X}^{-1} T_{w_0}^{-1} \sigma \big). \label{eq:tautau0idK}
  \end{align} 
\end{prop}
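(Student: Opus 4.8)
The plan is to unravel both sides of \eqref{eq:tautau0idK} using the definitions \eqref{eq:KBU-def} and \eqref{eq:K-def} together with the coproduct formula \eqref{eq:kowX} for $\Xfrak$, and then to match the pieces. First I would rewrite the left-hand side: applying $(\tau\tau_0\ot\id)$ to \eqref{eq:KBU-def} gives $(\tau\tau_0\ot\id)(\scrK)=\Rmat_{21}\cdot(1\ot\cK)\cdot(\tau\tau_0\ot\id)(\Rmat)$, since $(\tau\tau_0\ot\id)(\Rmat_{21}^{\tau\tau_0})=\Rmat_{21}$ (as $\tau\tau_0$ is an involution) and $\tau\tau_0$ fixes $\cK$ in the relevant sense, being $\Rmat^{\tau\tau_0}$ in the last factor. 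Using $\Rmat=R_{21}\kappa^-$ and $\Rmat^{\tau\tau_0}=(\tau\tau_0\ot 1)(R)\kappa^{-\tau\tau_0}$, the left side becomes $R\kappa^-\cdot(1\ot\cK)\cdot(\tau\tau_0\ot 1)(R)\kappa^{-\tau\tau_0}$, which after substituting $\cK=\Xfrak\,\xi\, T_{w_X}^{-1}T_{w_0}^{-1}$ is a product of known elements of $\sU^{(2)}_0$.

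\textbf{Key steps.} The computation I would carry out in order: (1) substitute $\cK=\Xfrak\xi T_{w_X}^{-1}T_{w_0}^{-1}$ into both sides and use \eqref{eq:KinK} to express $\scrK=(\cK^{-1}\ot 1)\kow(\cK)$, so that $(\tau\tau_0\ot\id)(\scrK)=(\tau\tau_0(\cK)^{-1}\ot 1)\,(\tau\tau_0\ot\id)(\kow(\cK))$; (2) expand $\kow(\cK)=\kow(\Xfrak)\cdot\kow(\xi)\cdot\kow(T_{w_X}^{-1}T_{w_0}^{-1})$ using \eqref{eq:kowX} for $\kow(\Xfrak)$, \eqref{eq:kow-xi} for $\kow(\xi)$, and the standard coproduct formula for the product of Lusztig automorphisms $T_{w_X}^{-1}T_{w_0}^{-1}$ in terms of quasi R-matrices; (3) recognize that $\kow(\Xfrak)(\Xfrak^{-1}\ot 1)=R^\theta\cdot(\text{correction})$ by the very definition \eqref{eq:Rtheta-def} of $R^\theta$, which is where the $R^\theta$ factor on the right side of \eqref{eq:tautau0idK} enters; (4) collect the $\kappa$-type factors: the $\kappa^-\cdot\kappa^{-\Theta}$ on the right should emerge from $\kow(\xi)=\xi\ot\xi\cdot\kappa^-\cdot\kappa^{-\Theta}$ after the $\xi\ot\xi$ part is absorbed (the $\xi\ot 1$ into the $\cK^{-1}\ot 1$ prefactor, leaving $1\ot\xi$ in the final factor $1\ot\xi T_{w_X}^{-1}T_{w_0}^{-1}$); (5) track the $R_X$-contributions: the term $R_X^{TT}=(T_{w_X}^{-1}\ot T_{w_X}^{-1})(R_X)$ should appear from combining the $\overline{R_X}$ inside $R^{(\tau,X)}$ with the quasi R-matrix for $\cM_X$ arising in the coproduct of $T_{w_X}^{-1}$, after conjugation by the appropriate automorphisms. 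Throughout I would use the intertwiner properties (K1$'$)--(K3$'$)-type relations, the identity $\Rmat\cdot\kow(u)=\kow^{\op}(u)\cdot\Rmat$, and the explicit form of $R^{(\tau,X)}$ from \cite[(9.2)]{a-BalaKolb15p} to push all the automorphism-twisted R-matrices past each other into the claimed normal form.

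\textbf{Main obstacle.} I expect the principal difficulty to be the bookkeeping of the Lusztig automorphisms $T_{w_X}^{-1}$ and $T_{w_0}^{-1}$ and their interaction with the coproduct. These are only group-like up to quasi R-matrix factors — concretely, $\kow(T_w^{-1})$ differs from $T_w^{-1}\ot T_w^{-1}$ by an explicit R-matrix-type element (this is exactly the mechanism producing $R_X^{TT}$), and one must carefully use the facts that $w_0$ sends positive roots to negative roots via $\tau_0$, and that $w_X$ preserves the root subsystem indexed by $X$. The conjugations $(\Ad(\xi)T_{w_0}^{-1}T_{w_X}^{-1}\tau\tau_0\ot 1)$ appearing in $R^{(\tau,X)}$ must be disentangled and shown to cancel against the contributions of $\kow(T_{w_X}^{-1}T_{w_0}^{-1})$ and the $\tau\tau_0$-twist, leaving precisely $R^\theta\cdot R_X^{TT}$. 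Getting the weight/degree signs right in the $\kappa$ exponents (distinguishing $\kappa^-$, $\kappa^{-\Theta}$, $\kappa^{-\tau\tau_0}$ and how $\Theta=-w_X\tau$ relates these) is the other place where errors would most easily creep in; I would verify the $\kappa$-exponent identity separately by evaluating on a weight vector $w\ot v_\lambda$, using \eqref{eq:ximunu} and \eqref{eq:Theta}, before assembling the full equation.
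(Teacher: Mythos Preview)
Your overall strategy---compute $(\tau\tau_0\ot\id)(\scrK)=\kow(\cK)(\cK^{-1}\ot 1)$ and expand $\kow(\cK)=\kow(\Xfrak)\kow(\xi)\kow(T_{w_X}^{-1}T_{w_0}^{-1})$---is exactly what the paper does, and your identification of the $T_{w_X}^{-1}$, $T_{w_0}^{-1}$ coproducts as the delicate point is spot on.

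However, you introduce an unnecessary complication in step~(2): you propose to expand $\kow(\Xfrak)$ using \eqref{eq:kowX}, bringing in $R^{(\tau,X)}$ and $\Xfrak_{K,2}$, and then in step~(5) to undo this via the explicit form of $R^{(\tau,X)}$.  The paper's proof avoids this detour entirely.  It keeps $\kow(\Xfrak)$ as an unexpanded black box throughout.  Concretely, the paper first uses the coproduct formulas for the Lusztig automorphisms (in the form $\kow(T_{w_0})=(T_{w_0}\ot T_{w_0})R^{-1}$ and the derived identity $\kow(T_{w_X}^{-1})\cdot R=R\cdot R_X^{TT}\cdot(T_{w_X}^{-1}\ot T_{w_X}^{-1})$) together with \eqref{eq:kow-xi} to reduce $\kow(\cK)(\cK^{-1}\ot 1)$ to
\[
  \kow(\Xfrak)\cdot R\cdot R_X^{TT}\cdot\kappa^-\cdot\kappa^{-\Theta}\cdot(\Xfrak^{-1}\ot\xi T_{w_X}^{-1}T_{w_0}^{-1}).
\]
Then it commutes $\Xfrak^{-1}\ot 1$ leftward past $R_X^{TT}$ and $\kappa^-\cdot\kappa^{-\Theta}$ (using that $\Xfrak\ot 1$ commutes with both, by \cite[Proposition~6.1.(4)]{a-BalaKolb15p} and \eqref{eq:Xintertwine}), and finally recognises $\kow(\Xfrak)\cdot R\cdot(\Xfrak^{-1}\ot 1)=R^\theta$ directly from the definition \eqref{eq:Rtheta-def}.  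No use of \eqref{eq:kowX} or $R^{(\tau,X)}$ is needed.

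Two smaller points: in your step~(3) you write $\kow(\Xfrak)(\Xfrak^{-1}\ot 1)=R^\theta\cdot(\text{correction})$, but the definition \eqref{eq:Rtheta-def} has an $R$ sandwiched between these factors, so it is the combination $\kow(\Xfrak)\cdot R\cdot(\Xfrak^{-1}\ot 1)$ that equals $R^\theta$; and in your first paragraph the formula you quote for $\Rmat^{\tau\tau_0}$ is actually the paper's formula for $\Rmat^{\tau\tau_0}_{21}$.  Neither is fatal, but both would cause confusion if you pursued the direct expansion route from \eqref{eq:KBU-def}.
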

\begin{proof}
  Recall that
  \begin{align}
    \kow(T_{w_0}) &= (T_{w_0}\ot T_{w_0}) \cdot R^{-1}\label{eq:kowTw0}\\
    \kow(T_{w_X}^{-1})&= (T_{w_X}^{-1}\ot T_{w_X}^{-1})\cdot \kappa \cdot R_{X\, 21} \cdot \kappa^-, \label{eq:kowTwX-}
  \end{align}
  see for example \cite[Lemma 3.8]{a-BalaKolb15p}. By \eqref{eq:RuuopR} we have
  \begin{align}
    \kow(T_{w_X}^{-1})\cdot R &\stackrel{\phantom{\eqref{eq:kowTwX-}}}{=}  R \cdot\kappa^- \cdot \kow^\op(T_{w_X}^{-1}) \cdot \kappa \nonumber\\
        &\stackrel{\eqref{eq:kowTwX-}}{=} R \cdot \kappa^- \cdot (T_{w_X}^{-1}\ot T_{w_X}^{-1})\cdot \kappa \cdot R_X\nonumber\\
        &\stackrel{\phantom{\eqref{eq:kowTwX-}}}{=} R\cdot R_X^{TT}\cdot (T_{w_X}^{-1}\ot T_{w_X}^{-1}).\label{eq:kowTwX-R}
  \end{align}
   With these preparations we calculate
  \begin{align*}
    \kow(\cK)\cdot(\cK^{-1}\ot 1)& \stackrel{\phantom{\eqref{eq:kowTwX-}}}{=}\kow(\Xfrak \xi T_{w_X}^{-1} T_{w_0}^{-1}\sigma) \cdot \big(\sigma T_{w_0} T_{w_X} \xi^{-1} \Xfrak^{-1} \ot 1 \big)\\
    &\stackrel{\eqref{eq:kowTw0}}{=}\kow(\Xfrak) \cdot \kow(\xi) \cdot \kow(T_{w_X}^{-1}) \cdot R  
       \cdot\big(T_{w_X} \xi^{-1} \Xfrak^{-1} \ot T_{w_0}^{-1} \sigma\big)\\
    &\stackrel{\eqref{eq:kowTwX-R}}{=}\kow(\Xfrak)\cdot \kow(\xi)\cdot R  \cdot R_X^{TT} \cdot \big(  \xi^{-1} \Xfrak^{-1} \ot T_{w_X}^{-1} T_{w_0}^{-1} \sigma\big)\\
    &\stackrel{\eqref{eq:kow-xi}}{=} \kow(\Xfrak)\cdot R  \cdot R_X^{TT}\cdot (\xi\ot \xi) \cdot \kappa^-\cdot\kappa^{-\Theta} \cdot \big(  \xi^{-1} \Xfrak^{-1} \ot T_{w_X}^{-1} T_{w_0}^{-1} \sigma\big)\\
    &\stackrel{\phantom{\eqref{eq:kow-xi}}}{=} \kow(\Xfrak)\cdot R  \cdot R_X^{TT}\cdot \kappa^- \cdot 
    \kappa^{-\Theta} \cdot \big( \Xfrak^{-1} \ot \xi T_{w_X}^{-1} T_{w_0}^{-1}\sigma \big).
  \end{align*}  
  By \cite[Proposition 6.1.(4)]{a-BalaKolb15p} the element $\Xfrak\ot 1$ commutes with $\kappa^-\cdot\kappa^{-\Theta}$, and by \eqref{eq:Xintertwine} it also commutes with $R_X^{TT}$. Hence one gets
  \begin{align*}   
     \kow(\cK)\cdot(\cK^{-1}\ot 1)&=\kow(\Xfrak)\cdot R \cdot(\Xfrak^{-1}\ot 1) \cdot R_X^{TT}\cdot \kappa^- \cdot 
    \kappa^{-\Theta} \cdot \big(1\ot \xi T_{w_X}^{-1} T_{w_0}^{-1} \sigma\big)
  \end{align*} 
which proves formula \eqref{eq:tautau0idK}.
\end{proof}
We are now in a position to show that the element $\scrK$ defined by \eqref{eq:KBU-def} is a universal K-matrix for $\Bcs$ in the sense of Definition \ref{def:K-matrix} up to completion. 
\begin{thm}\label{thm:scrK}
  The element $\scrK\in \sU^{(2)}_{0,\sigma}$ defined by \eqref{eq:KBU-def} has the following properties:
  \begin{enumerate}
    \setcounter{enumi}{-1}
    \item $\scrK\in \sB^{(2)}$,\label{item0-srcK}
    \item $\scrK \cdot\kow(b)  = \kow(b) \cdot \scrK $ for all $b\in \Bcs$,
    \item $(\kow\ot \id)(\scrK)=\Rmat_{32}\cdot \scrK_{13} \cdot \Rmat_{23}$,
    \item $(\id \ot \kow)(\scrK)= \Rmat_{32}\cdot\scrK_{13} \cdot\Rmat_{23} \cdot\scrK_{12}$.
  \end{enumerate}
\end{thm}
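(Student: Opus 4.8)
The plan is to verify the four properties in turn, deriving everything from the already-established facts about $\cK$ (namely \eqref{eq:KbttbK}, \eqref{eq:kowK-qsp}, \eqref{eq:kowK-qsp2}), the defining formula \eqref{eq:KBU-def}, and the alternative expressions \eqref{eq:KinK} for $\scrK$. Properties (1)--(3) are formal consequences of the axioms of a quasitriangular bialgebra combined with the twisted (K$1'$) and (K$3'$) properties of $\cK$; only property (0) requires genuine work, and that work is packaged into Proposition \ref{prop:KinBU} and Proposition \ref{prop:Rthetamu}.

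For property (1), I would start from $\scrK=(\cK^{-1}\ot 1)\kow(\cK)$ in \eqref{eq:KinK}. Given $b\in\Bcs$, apply $\kow$ to both sides of $\cK b=\tau\tau_0(b)\cK$ to get $\kow(\cK)\kow(b)=\kow(\tau\tau_0(b))\kow(\cK)=(\tau\tau_0\ot\tau\tau_0)(\kow(b))\kow(\cK)$; multiplying on the left by $\cK^{-1}\ot 1$ and using $(\cK^{-1}\ot 1)(\tau\tau_0(b_{(1)})\ot\tau\tau_0(b_{(2)}))=(b_{(1)}\ot\tau\tau_0(b_{(2)}))(\cK^{-1}\ot 1)$ (which is \eqref{eq:KbttbK} in the first leg only) yields $\scrK\kow(b)=((\id\ot\tau\tau_0)\kow(b))\scrK$. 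For properties (2) and (3), I would apply $\kow\ot\id$ (resp.\ $\id\ot\kow$) to the defining formula $\scrK=\Rmat^{\tau\tau_0}_{21}(1\ot\cK)\Rmat$, using $(\kow\ot\id)(\Rmat)=\Rmat_{13}\Rmat_{23}$, $(\id\ot\kow)(\Rmat)=\Rmat_{13}\Rmat_{12}$ and their $21$/$\tau\tau_0$-twisted variants, the coproduct formula \eqref{eq:kowK-qsp}/\eqref{eq:kowK-qsp2} for $\cK$, the commutation relation \eqref{eq:RuuopR}, and the quantum Yang--Baxter equation \eqref{eq:qYB1}; these are exactly the twisted analogues of the computations in the proof of Lemma \ref{lem:KcalK}, so the manipulations are bookkeeping with leg notation and the only care needed is tracking where $\tau\tau_0$ acts.

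The main obstacle is property (0), $\scrK\in\sB^{(2)}$. Here the plan is to use the identity \eqref{eq:tautau0idK} from Proposition \ref{prop:KinBU}: applying $\tau\tau_0\ot\id$, it gives $\scrK$ as a product of $R^\theta$, $R_X^{TT}$, $\kappa^-$, $\kappa^{-\Theta}$, and $1\ot\xi T_{w_X}^{-1}T_{w_0}^{-1}$ (after translating back through $\tau\tau_0$, but note $\tau\tau_0$ applied to the first leg preserves the property of lying in $\Bcs$ only if one is careful — in fact $\scrK$ itself, not $(\tau\tau_0\ot\id)(\scrK)$, is what must lie in $\sB^{(2)}$, so one uses that $\tau\tau_0$ restricts to an algebra automorphism of $\Bcs$). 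Since $\sB^{(2)}$ is a subalgebra of $\sU^{(2)}_0$, it suffices to check each factor lies in $\sB^{(2)}$: $R^\theta\in\sB^{(2)}$ by Example \ref{eg:RthetainB2} (i.e.\ Proposition \ref{prop:Rthetamu}); $\kappa^-\cdot\kappa^{-\Theta}\in\sB^{(2)}$ by a computation parallel to Example \ref{eg:kkT} (the relevant character is $-\lambda-\Theta(\lambda)$, whose associated $K_{-\lambda-\Theta(\lambda)}$ lies in $\Uc^0_\Theta\subset\Bcs$ since it is fixed by $\Theta$); the factor $1\ot\xi T_{w_X}^{-1}T_{w_0}^{-1}$ has its first leg equal to $1\in\Bcs$ hence lies in $\Bcs\ot\sU\subset\sB^{(2)}$; and $R_X^{TT}=(T_{w_X}^{-1}\ot T_{w_X}^{-1})(R_X)$ lies in $\cM_X\ot\cM_X\subset\Bcs\ot\sU$ because $R_X\in\widehat{\cM_X\ot\cM_X}$ and $T_{w_X}$ preserves $\cM_X$. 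The one subtlety I anticipate is the interaction of the $\tau\tau_0$-twist with membership in $\Bcs$ in the first tensor leg: one must either phrase Proposition \ref{prop:KinBU} so that the untwisted $\scrK$ appears, or invoke that $\tau\tau_0(\Bcs)=\Bcs$ (equivalently $\tau\tau_0(B_i)=B_{\tau\tau_0(i)}$ up to rescaling, using \eqref{eq:c-tau-condition}), to conclude $\scrK\in\sB^{(2)}$ from $(\tau\tau_0\ot\id)(\scrK)\in\sB^{(2)}$.
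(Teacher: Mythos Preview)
Your proposal is correct and follows essentially the same route as the paper's proof: properties (1)--(3) are deduced exactly as you outline (from \eqref{eq:KinK} and \eqref{eq:KbttbK} for (1), from \eqref{eq:KBU-def} and the R-matrix axioms for (2), and by the twisted analogue of the computation in Lemma \ref{lem:KcalK} for (3)), and property (0) is obtained from the factorisation in Proposition \ref{prop:KinBU} by checking that each factor $R^\theta$, $R_X^{TT}$, $\kappa^-\cdot\kappa^{-\Theta}$, and $1\ot(\cdots)$ lies in $\sB^{(2)}$ (Examples \ref{eg:kkT}, \ref{eg:RthetainB2}, and the observation that $R_X^{TT}$ contracts into $\cM_X\subset\Bcs$). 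Your caution about the $\tau\tau_0$-twist on the first leg is well placed---the paper passes over exactly this point in silence---and the resolution via $\tau\tau_0(\Bcs)=\Bcs$ under condition \eqref{eq:c-tau-condition} is the natural one.
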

\begin{proof}
To prove property (0), recall from Examples \ref{eg:kkT} and \ref{eg:RthetainB2} that $\kappa^-\cdot \kappa^{-\Theta}\in \sB^{(2)}$ and that $R^\theta\in \sB^{(2)}$. Moreover, $\langle R_X^{TT},a\rangle_{2,1}\in \uqgX \subset \Bcs$ for all $a\in \cA_\sigma$ and hence $R_X^{TT}\in \sB^{(2)}$. As $\sB^{(2)}$ is a subalgebra of $\sU^{(2)}_{0,\sigma}$ we obtain $\Kmat\in \sB^{(2)}$ from Proposition \ref{prop:KinBU}.

Properties (1), (2) and (3) of the theorem follow from the definition \eqref{eq:KBU-def} and from Theorem \ref{thm:BK} in the same way as the second part of Lemma \ref{lem:KcalK}. 
\end{proof}
\subsection{The braided module category of finite dimensional $\Bcs$-modules}\label{sec:braid-mod-cat}
Let $\Mcs$ denote the category of finite dimensional $\Bcs$-modules. As $\Bcs$ is a right coideal subalgebra of $\Uc$, the category $\Mcs$ is a right module category over $\Oint$ and hence over $\Oint^{\langle\sigma\rangle}$. Recall from Proposition \ref{cor:Os-rib} that $(\Oint^{\langle \sigma\rangle},\ot,{\mathbbm 1},c)$ is a braided monoidal category with braiding $c$ given by \eqref{eq:braiding}. We want to show that multiplication by the element $\Kmat$ endows $\Mcs$ with the structure of a braided module category over $\Oint^{\langle \sigma \rangle}$. To this end it remains to show that multiplication by an element in $\sB^{(2)}$ provides a natural transformation of $\ot:\Mcs\times \Oint\rightarrow \Mcs$. The subtlety lies here in the fact that elements of $\sB^{(2)}\subset \sU^{(2)}_{0,\sigma}$ are a priori only defined on tensor products $V_1\ot V_2$ for $V_1, V_2\in Ob(\Oint^{\langle \sigma \rangle})$. However, we would like to let $\Kmat$ act on tensor products $M\ot V$ where $M\in Ob(\Mcs)$.

We address this problem in a slightly more general setting. For any $n\in \N$ consider the forgetful functor
\begin{align*}
  \For_B^{(n)}:\Mcs \times \underbrace{\Oint^{\langle \sigma \rangle}\times \dots \times \Oint^{\langle \sigma \rangle}}_{\text{$n{-}1$ times}} \longrightarrow \mathcal{V}ect\\
  (M,V_1,\dots,V_{n-1})\mapsto M\ot V_1\ot\dots\ot V_{n-1}
\end{align*}
into the category of $\field(q^{1/d})$-vector spaces, and define
\begin{align}
  \scrC^{(n)}=\End(\For_B^{(n)}).
\end{align}
Elements of $\scrC^{(n)}$ are defined on $M\ot V_1\ot\dots\ot V_{n-1}$ where $M\in Ob(\Mcs)$ and $V_i\in Ob(\Oint^{\langle \sigma \rangle})$ for $i=1,\dots,n-1$.
Let $k,n\in \N$ with $1\le k\le n$. Similar to \eqref{eq:kowknU} there is an algebra homomorphism
\begin{align}\label{eq:kowknC}
  \kow^{\scrC,\sigma}_{n,k}: \scrC^{(n)} \rightarrow \scrC^{(n+1)}
\end{align}
defined by
\begin{align*}
  \big(\kow^{\scrC,\sigma}_{n,k}(\varphi)\big)_{M,V_1,\dots,V_n}=
    \begin{cases}
      \varphi_{\raisebox{-1pt}{\tiny $M\ot V_1,V_2,\dots,V_n$}}& \mbox{if $k=1$,}\\
      \varphi_{M,V_1, \dots, V_{k-2}, V_{k-1}\ot V_k, V_{k+1}, \dots, V_n} &\mbox{if $k\ge 2$.}
    \end{cases}
\end{align*}
For any $n\in\N$ and any $V_1, \dots, V_{n-1}\in Ob(\Oint^{\langle \sigma \rangle})$ we use Sweedler notation for the $\cA^{\ot(n-1)}_\sigma$-comodule structure on $V_1\ot \dots \ot V_{n-1}$. More explicitly, for $v\in V_1\ot \dots \ot V_{n-1}$ we write
\begin{align*}
  \kow(v)=v_{(0)}\ot v_{(1)} \in \big(V_1\ot \dots \ot V_{n-1}\big) \ot \cA^{\ot(n-1)}_\sigma. 
\end{align*}
With this notation we define a linear map
\begin{align*}
  \phi_n:\sB^{(n)} \rightarrow \scrC^{(n)}
\end{align*}
by the property that for any $X\in \sB^{(n)}$ one has
\begin{align}\label{eq:phin-def}
  \phi_n(X)(m\ot v)=\langle X,v_{(1)}\rangle_{n,n-1}(m)\ot v_{(0)}
\end{align}
for all $m\in M$, $v\in V_1\ot \dots\ot V_{n-1}$ where $M\in Ob(\Mcs)$ and $V_1,\dots,V_{n-1}\in Ob(\Oint^{\langle \sigma \rangle})$. The following proposition for $n=2$ allows us to consider $\Kmat$ as an element in $\scrC^{(2)}$ and hence as a natural transformation of $\ot:\Mcs\times \Oint^{\langle \sigma \rangle}\rightarrow \Mcs$.
\begin{prop}\label{prop:phin}
  For any $n\in \N$ the map $\phi_n:\sB^{(n)} \rightarrow \scrC^{(n)}$ is an injective algebra homomorphism. Moreover,
  \begin{align}\label{eq:kowkn-compatible}
    \phi_{n+1}\circ \kow^{\sU,\sigma}_{n,k} = \kow^{\scrC,\sigma}_{n,k}\circ \phi_n
  \end{align}
  for all $1\le k\le n$.
\end{prop}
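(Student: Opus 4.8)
The plan is to verify the four assertions in turn by unwinding the defining formula \eqref{eq:phin-def} together with the basic properties of the contraction pairing. First, for $X\in\sB^{(n)}$ the right-hand side of \eqref{eq:phin-def} makes sense: $\langle X,v_{(1)}\rangle_{n,n-1}\in\Bcs$ by \eqref{eq:BU-complete}, so it acts on the $\Bcs$-module $M$, and $v_{(0)}\in V_1\ot\dots\ot V_{n-1}$. Naturality of $\phi_n(X)$ in the $\Mcs$-variable holds because morphisms in $\Mcs$ are $\Bcs$-linear and hence commute with the action of $\langle X,v_{(1)}\rangle\in\Bcs$, and in the $\Oint$-variables because $\Uc$-module maps are $\cA$-comodule maps and therefore intertwine the $\cA^{\ot(n-1)}$-coaction; thus $\phi_n(X)\in\scrC^{(n)}$. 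That $\phi_n$ is an algebra homomorphism then follows formally: $\langle 1,a\rangle_{n,n-1}=\vep^{\ot(n-1)}(a)\,\id$ together with the counit axiom gives $\phi_n(1)=\id$, and applying $\phi_n(Y)$ and then $\phi_n(X)$ to $m\ot v$, re-bracketing the Sweedler legs by coassociativity of the coaction, and collapsing $\langle X,\cdot\rangle\,\langle Y,\cdot\rangle$ into $\langle XY,\cdot\rangle$ via \eqref{eq:hopf-contraction} gives $\phi_n(X)\phi_n(Y)=\phi_n(XY)$.

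For injectivity the key observation is that for \emph{any} $X\in\sU^{(n)}_0$ and any $V_0,V_1,\dots,V_{n-1}\in Ob(\Oint)$ one has, straight from the definitions of the contraction pairing and of the $\cA^{\ot(n-1)}$-coaction (expand $\kow(v_1\ot\dots\ot v_{n-1})$ in a basis of $V_1\ot\dots\ot V_{n-1}$),
\[
  X(w\ot v)=\langle X,v_{(1)}\rangle_{n,n-1}(w)\ot v_{(0)},\qquad w\in V_0,\ v\in V_1\ot\dots\ot V_{n-1},
\]
where $\langle X,v_{(1)}\rangle\in\sU$ acts on $V_0$. If $X\in\sB^{(n)}$ and $\phi_n(X)=0$, then restricting each $V_0\in Ob(\Oint)$ to a $\Bcs$-module, whose $\Bcs$-action is the restriction of the $\sU$-action, this formula shows $\phi_n(X)_{V_0,V_1,\dots,V_{n-1}}=X_{V_0,V_1,\dots,V_{n-1}}$; since this holds for all objects, $X=0$.

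For the compatibility \eqref{eq:kowkn-compatible} I would evaluate both sides on an arbitrary $m\ot v_1\ot\dots\ot v_n$. The underlying mechanism is that, under the contraction pairing, replacing $X$ by $\kow^{\sU}_{n,k}(X)$ corresponds to passing from the $\cA^{\ot n}$-coaction on $V_1\ot\dots\ot V_n$ to the $\cA^{\ot(n-1)}$-coaction on $V_1\ot\dots\ot(V_{k-1}\ot V_k)\ot\dots\ot V_n$, i.e.\ to multiplying the corresponding two tensor factors of $\cA$; this is immediate from the definitions of $\kow^{\sU}_{n,k}$, of the contraction pairing, and of the comodule structure of a tensor product. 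For $k\ge 2$ the two merged slots lie among the $V_i$ and \eqref{eq:kowkn-compatible} follows at once. For $k=1$ the merged slot sits next to the surviving ($M$-)slot of $\sU^{(n+1)}_0$, matching $\kow^{\scrC}_{n,1}(\varphi)_{M,V_1,\dots,V_n}=\varphi_{M\ot V_1,V_2,\dots,V_n}$; here one additionally uses that $\langle X,(v_2\ot\dots\ot v_n)_{(1)}\rangle\in\Bcs$ and that, by the coideal property $\kow(\Bcs)\subseteq\Bcs\ot\Uc$, its action on the $\Bcs$-module $M\ot V_1$ coincides with the vector produced by $\phi_{n+1}$ applied to $\kow^{\sU}_{n,1}(X)\in\sB^{(n+1)}$ (via \eqref{eq:kowB-inB2}).

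I expect the $k=1$ case of \eqref{eq:kowkn-compatible} to be the main obstacle, as it is the only place where the first tensor slot of $\sU^{(n+1)}_0$ genuinely interacts with the module $M$: one must line up $\kow^{\sU}_{n,1}$, the contraction pairing $\langle\,,\,\rangle_{n+1,n}$, and the $\Bcs$-coaction on $M\ot V_1$, bearing in mind that $\kow$ on $\sU$ only lands in $\sU^{(2)}_0$, so the Sweedler-type manipulations of $\kow(\langle X,\cdot\rangle)$ must be carried out at the level of the action on modules rather than symbolically.
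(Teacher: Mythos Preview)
Your proposal is correct and follows essentially the same route as the paper: the algebra homomorphism property via \eqref{eq:hopf-contraction} and coassociativity, injectivity by restricting an object $V_0\in Ob(\Oint)$ to a $\Bcs$-module so that $\phi_n(X)_{V_0,\dots}=X_{V_0,\dots}$, and the compatibility \eqref{eq:kowkn-compatible} by unwinding the definitions of $\kow^{\sU}_{n,k}$ and $\kow^{\scrC}_{n,k}$. The paper's proof is much more compressed---in particular it disposes of \eqref{eq:kowkn-compatible} in a single sentence (``follows by comparison of the definitions'')---whereas you correctly isolate the $k=1$ case as the one needing the coideal property $\kow(\Bcs)\subseteq\Bcs\ot\Uc$ to match the $\Bcs$-action on $M\ot V_1$ with the output of $\phi_{n+1}$; this is a genuine detail that the paper suppresses but your argument supplies.
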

\begin{proof}
  Let $X,Y\in \sB^{(n)}$ and $m$, $v$ as in \eqref{eq:phin-def}. Equation \eqref{eq:hopf-contraction} implies that 
  \begin{align*}
    \phi_n(XY)(m\ot v)&=\big(\langle XY,v_{(1)}\rangle_{n,n-1}(m)\big)\ot v_{(0)}\\
      &= \big(\langle X,v_{(1)}\rangle_{n,n-1}  \langle Y,v_{(2)}\rangle_{n,n-1} (m)\big)\ot v_{(0)}\\
      &=\phi_n(X)\big(\phi_n(Y)(m\ot v) \big).
  \end{align*}
This shows that $\phi_n$ is an algebra homomorphism. Any element $X\in \sB^{(n)}$ is uniquely determined by its action on all $V_1\ot \dots \ot V_n$ where $V_i\in Ob(\Oint^{\langle \sigma \rangle})$. As $V_1$ can be considered as a $\Bcs$-module by restriction, the map $\phi_n$ is injective. Finally, formula \eqref{eq:kowkn-compatible} follows by comparison of the definitions of the maps \eqref{eq:kowknU} and \eqref{eq:kowknC}. 
\end{proof}
The above proposition implies that $\Mcs$ is a braided module category over $\Oint^{\langle \sigma \rangle}$. Indeed, by definition of $\scrC^{(2)}$ the element 
\begin{align*}
  \phi_2(\Kmat)=\{\phi_2(\Kmat)_{M,V}:M\ot V\rightarrow M\ot V\}_{M\in Ob(\Mcs),V\in Ob(\Oint^{\langle \sigma\rangle})}
\end{align*}
is natural in both tensor entries. Moreover, property (1) in Theorem \ref{thm:scrK} implies that $\phi_2(\Kmat)_{M,V}$ is a $\Bcs$-module homomorphism for any $M\in Ob(\Mcs)$ and $V\in Ob(\Oint^{\langle \sigma \rangle})$.
\begin{cor}\label{cor:B-mod-braided}
  The triple $(\Mcs,\ot,\phi_2(\Kmat))$ is a braided module category over $(\Oint^{\langle \sigma \rangle},\ot,\mathbbm{1}, c)$.
\end{cor}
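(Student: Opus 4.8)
The plan is to build on the discussion immediately preceding the corollary. There it is already observed that $\phi_2(\Kmat)\in\scrC^{(2)}=\End(\For_B^{(2)})$ is natural in both arguments, and that the hypothesis $\tau=\tau_0$ — which forces $\tau\tau_0=\id$ — turns property (1) of Theorem \ref{thm:scrK} into $\scrK\,\kow(b)=\kow(b)\,\scrK$ for all $b\in\Bcs$, so that each $\phi_2(\Kmat)_{M,V}$ is $\Bcs$-linear. To conclude that $e:=\phi_2(\Kmat)$ is a natural \emph{automorphism} of $\boxtimes=\ot$ I would check in addition that $\scrK$ is invertible in $\sB^{(2)}$; this follows from the factorisation of $\scrK$ in Proposition \ref{prop:KinBU} (valid here since $\tau=\tau_0$ gives $\tau\tau_0=\id$), each factor of which is invertible with inverse again in $\sB^{(2)}$ by the same arguments that establish property (0) in Theorem \ref{thm:scrK}. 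Everything then reduces to verifying the braided module axioms \eqref{eq:braidedMod1} and \eqref{eq:braidedMod2} for $e$.

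The strategy for both axioms is the same: express each side as $\phi_3$ of an element of $\sB^{(3)}$ and then compare those elements inside $\sU^{(3)}_0$, which is legitimate since $\phi_3$ is an injective algebra homomorphism (Proposition \ref{prop:phin}). The left-hand sides are handled directly by the compatibility \eqref{eq:kowkn-compatible} of the $\phi_n$ with the coproducts: since the module-category object $M\boxtimes V$ is the $\Bcs$-module $M\ot V$, one gets $e_{M\boxtimes V,W}=\phi_3\bigl((\kow\ot\id)(\scrK)\bigr)$ and $e_{M,V\ot W}=\phi_3\bigl((\id\ot\kow)(\scrK)\bigr)$ on $M\ot V\ot W$.

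For the right-hand sides I would substitute the braiding $c=\flip\circ\Rmat$ of $\Oint$, so that $\id_M\boxtimes c_{V,W}$ becomes $\flip_{23}\circ\phi_3(\Rmat_{23})$ (and likewise for $c_{W,V}$), while $e_{M,W}\boxtimes\id_V$ and $e_{M,V}\boxtimes\id_W$ become $\phi_3(\scrK_{12})$ evaluated on the appropriate triple tensor product. One then moves the plain flips $\flip_{23}$ past the operators $\phi_3(\,\cdot\,)$ using the elementary leg-relabelling identities $\flip_{23}\circ\phi_3(X_{12})=\phi_3(X_{13})\circ\flip_{23}$ and $\flip_{23}\circ\phi_3(X_{23})=\phi_3(X_{32})\circ\flip_{23}$; the two flips appearing in each axiom cancel, leaving $\phi_3(\Rmat_{32}\,\scrK_{13}\,\Rmat_{23})$ for \eqref{eq:braidedMod1} and $\phi_3(\Rmat_{32}\,\scrK_{13}\,\Rmat_{23}\,\scrK_{12})$ for \eqref{eq:braidedMod2}. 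By properties (2) and (3) of Theorem \ref{thm:scrK}, read with $\tau\tau_0=\id$, these are exactly $\phi_3\bigl((\kow\ot\id)(\scrK)\bigr)$ and $\phi_3\bigl((\id\ot\kow)(\scrK)\bigr)$, matching the left-hand sides. This is also the precise point where $\tau=\tau_0$ enters: only then do properties (1)--(3) carry no $\tau\tau_0$-twist and hence agree with the untwisted axioms of Definition \ref{def:braidedMod}.

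I expect the only genuine difficulty to be bookkeeping rather than ideas. The individual factors $\Rmat$ and $\cK$ of $\scrK$ do not act on $M\ot V$, since $M$ is only a $\Bcs$-module; hence the manipulations above cannot be done naively on vector spaces and must be carried out inside the algebras $\sB^{(n)}$ through the contraction pairing and the maps $\phi_n$, keeping careful track of which triple tensor product each operator is evaluated on and of how the flips interleave with the $\phi_n(\,\cdot\,)$'s. Once this is organised, \eqref{eq:braidedMod1} and \eqref{eq:braidedMod2} are nothing more than relabelled forms of the leg-notation identities (2) and (3) of Theorem \ref{thm:scrK}.
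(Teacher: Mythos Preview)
Your proposal is correct and follows essentially the same approach as the paper's proof: the paper also invokes the discussion preceding the corollary to get $\phi_2(\Kmat)\in\Aut(\ot)$, and then derives axioms \eqref{eq:braidedMod1} and \eqref{eq:braidedMod2} directly from properties (2) and (3) of Theorem \ref{thm:scrK} via the compatibility relation \eqref{eq:kowkn-compatible} in Proposition \ref{prop:phin}. You spell out in considerably more detail the leg-relabelling manipulations with the flips and the passage through $\phi_3$, and you also explicitly address the invertibility of $\scrK$ in $\sB^{(2)}$ (which the paper leaves implicit), but the underlying argument is the same.
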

\begin{proof}
  As explained above $\phi_2(\Kmat)$ defines an element in $\Aut(\ot)$ for the tensor product $\ot:\Mcs\times \Oint^{\langle \sigma \rangle}\rightarrow \Mcs$ which comes from the coideal structure of $\Bcs$.
  Properties \eqref{eq:braidedMod1} and \eqref{eq:braidedMod2} of Definition \ref{def:braidedMod} for $e=\phi_2(\Kmat)$ follow from formulas (2) and (3) in Theorem \ref{thm:scrK} together with formula \eqref{eq:kowkn-compatible} in Proposition \ref{prop:phin}.
\end{proof}
\begin{rema}
If $\tau=\tau_0$ then $\Oint^{\langle \sigma \rangle}=\Oint$. Hence in this case $(\Mcs,\ot,\phi_2(\Kmat))$ is a braided module category over $(\Oint,\ot,\mathbbm{1}, c)$.
\end{rema}
\section{The center of $\Bcs$ revisited}\label{sec:center}
As an application we now use the universal K-matrix for quantum symmetric pairs to obtain a distinguished basis of the center $Z(\Bcs)$ of $\Bcs$. The construction consists of two steps. In the first step we show that $Z(\Bcs)$ inherits a direct sum decomposition from the locally finite part of $\Uc$, and we estimate the dimensions of the direct summands. The constructions leading up to Proposition \ref{prop:dim-estimate} are already contained in \cite{a-KL08}, but we present them here in a much streamlined fashion. The second step, which was the hard part in \cite{a-KL08}, is to prove the existence of nontrivial central elements in sufficiently many components of the locally finite part. The construction of such elements is now much simplified by the existence of the $2$-tensor universal K-matrix $\Kmat$ and the graphical calculus for braided module categories. Eventually, the generalized $l$-operator for the 2-tensor universal K-matrix provides a surjective algebra homomorphism from the invariants in the restricted dual of $\Uq_\sigma$ to the center $Z(\Bcs)$. This allows us in particular to establish the multiplication formula \eqref{eq:LR} which was beyond reach in \cite{a-KL08} and \cite{a-KolbStok09}
\subsection{The locally finite part $F_l(\Uc)$}
The Hopf algebra $\Uc$ acts on itself by the left adjoint action
\begin{align*}
  \ad(u)(x) = u_{(1)} x S(u_{(2)}) \qquad \mbox{for all $x,u\in \Uc$.}
\end{align*}
We write
\begin{align*}
  F_l(\Uc)=\{x\in \Uc \,|\, \dim(\ad(\Uc)(x))<\infty\}
\end{align*}
to denote the locally finite part of $\Uc$ with respect to this action. By \cite[Theorem 4.10]{a-JoLet2} the left locally finite part is a direct sum of cyclic modules
\begin{align}\label{eq:Fl-PW}
  F_l(\Uc)=\bigoplus_{\lambda\in P^+} \ad_l(\Uc)(K_{-2\lambda}).
\end{align}
Recall that $\cA\subset \check{U}^\ast$ denotes the Hopf algebra generated by the matrix coefficients of all representations in $\Oint$.
The direct sum decomposition \eqref{eq:Fl-PW} is reminiscent of the Peter-Weyl decomposition of $\cA$. To make this precise, let $\ad_r(u)(x)=S(u_{(1)})x u_{(2)}$ denote the right adjoint action of $\Uc$ on itself. The Hopf algebra $\cA$ is a left $\Uc$-module with the dual action $\ad_r^\ast$ given by
\begin{align*}
  \big(\ad_r^\ast(u)(c)\big)(x) = c\big(\ad_r(u)(x)\big) \qquad \mbox{for all $c\in \cA$ and $u, x\in\Uc$.}
\end{align*}
Recall that $V(\lambda)$ denotes the simple $\Uc$-module of highest weight $\lambda \in P^+$. For any finite-dimensional $\Uc$-module $V$ the dual space $V^\ast$ has a left $\Uc$-module structure given by
\begin{align*}
  (uf)(v)=f(S(u)v) \qquad \mbox{for all $f\in V^\ast$, $v\in V$, $u\in \Uc$.}
\end{align*}
With respect to this $\Uc$-module structure one has
\begin{align}\label{eq:Vldual}
  V(\lambda)^\ast \cong V(-w_0\lambda) \qquad \mbox{for all $\lambda\in P^+$.}
\end{align}
Let $C^\lambda = \mathrm{span} \{c_{f,v}\,|\,f\in V(\lambda)^\ast, v\in V(\lambda)\}$ denote the $\field(q^{1/d})$-linear span of the matrix coefficients corresponding to the simple $\Uc$-module $V(\lambda)$. We also write $c^\lambda_{f,v}$ instead of $c_{f,v}$ if we want to stress that $c_{f,v}\in C^\lambda$. 

To describe the left $\Uc$-module structure of the direct summand $\ad(\Uc)(K_{-2\lambda})$ we use the $l$-operators $\ltil, \ltil':\cA\rightarrow \Uc$ from Remark \ref{rem:ltil}. Recall that $\ltil(a)=\langle\Rmat\Rmat_{21},a\rangle_{2,1}$ and $\ltil'(a)=\langle\Rmat_{21}^{-1} \Rmat^{-1},a\rangle_{2,1}$ for all $a\in \cA$. The following proposition is in principle contained in \cite{a-Caldero93}, see also \cite[Remark 1.6, Proposition 1.7]{a-KolbStok09} for the exact statement given here.
\begin{prop}\label{prop:Caldero}
  The $l$-operators $\ltil$ and $\ltil'$ define isomorphisms of left $\Uc$-modules $\ltil, \ltil':\cA\rightarrow F_l(\Uc)$ with   
\begin{align*}
   \ltil(C^\lambda)=\ad(\Uc)(K_{-2\lambda})\quad \mbox{and} \quad  \ltil'(C^\lambda)=\ad(\Uc)(K_{2w_0\lambda}) \qquad \mbox{for all $\lambda\in P^+$.}
\end{align*}
Moreover, if $v_\lambda\in V(\lambda)_\lambda$ is a highest weight vector and $f_{-\lambda}\in V(\lambda)^\ast_{-\lambda}$ is a lowest weight vector with $f_{-\lambda}(v_\lambda)=1$, then
\begin{align*}
  \ltil(c^\lambda_{f_{-\lambda},v_\lambda}) = K_{-2\lambda}.
\end{align*}
\end{prop}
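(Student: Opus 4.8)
The plan is to derive the proposition from two classical inputs plus one short computation. The inputs are: that $\ltil$ and $\ltil'$ are injective morphisms of left $\Uc$-modules (with the $\ad_r^\ast$-action on $\cA$ and the $\ad$-action on $\Uc$), and the Joseph--Letzter decomposition \eqref{eq:Fl-PW}. For the first of these I would argue as follows. By \eqref{eq:ltilXY} one has $\ltil(a)=\ltil_{\Rmat}(a_{(1)})\,\ltil_{\Rmat_{21}}(a_{(2)})$, so $\ltil$ is the convolution product of the two classical $\ell$-functionals $\ltil_{\Rmat}$ and $\ltil_{\Rmat_{21}}$; using \eqref{eq:hopf-contraction}, the compatibility of the contraction pairing with the tensor coalgebra structure of $\cA$, and the intertwining relation \eqref{eq:RuuopR} (together with its counterpart for $\Rmat_{21}$, which is the universal R-matrix of $\Uc^\cop$), one checks that $\ltil(\ad_r^\ast(u)(a))=\ad(u)(\ltil(a))$ for all $u\in\Uc$ and $a\in\cA$, and likewise for $\ltil'$; injectivity is clear since an element of $\sU$ is determined by its action on the objects of $\Oint$. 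This is essentially Caldero's theorem, and I would cite \cite{a-Caldero93}, \cite[Remark 1.6, Proposition 1.7]{a-KolbStok09} rather than reprove it.

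The computation that remains is the normalization $\ltil(c^\lambda_{f_{-\lambda},v_\lambda})=K_{-2\lambda}$. I would evaluate it on an arbitrary weight vector: for $W\in Ob(\Oint)$ and $w_\nu\in W_\nu$, the definition of the contraction pairing gives
\begin{align*}
  \ltil(c^\lambda_{f_{-\lambda},v_\lambda})(w_\nu)=(\id\ot f_{-\lambda})\big((\Rmat\Rmat_{21})(w_\nu\ot v_\lambda)\big).
\end{align*}
Write $\Rmat=R_{21}\kappa^-$ and $\Rmat_{21}=R\,\kappa^-$ with $R=\sum_{\mu\in Q^+}R_\mu$, $R_\mu\in U^-_\mu\ot U^+_\mu$, $R_0=1\ot1$. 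In $\Rmat_{21}(w_\nu\ot v_\lambda)$ the factor $R_\mu$ acts on the second leg through $U^+_\mu$, which annihilates the highest weight vector $v_\lambda$ for $\mu>0$, so $\Rmat_{21}(w_\nu\ot v_\lambda)=q^{-(\nu,\lambda)}w_\nu\ot v_\lambda$. Applying $\Rmat=R_{21}\kappa^-$ to this and then pairing the second leg with $f_{-\lambda}$, the factor $(R_\mu)_{21}$ for $\mu>0$ acts on the second leg through $U^-_\mu$, landing in $V(\lambda)_{\lambda-\mu}$, on which the weight $-\lambda$ functional $f_{-\lambda}$ vanishes; so again only $\mu=0$ contributes. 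Collecting the scalar $q^{-(\nu,\lambda)}$ from each of the two copies of $\kappa^-$ and using $f_{-\lambda}(v_\lambda)=1$, we get
\begin{align*}
  \ltil(c^\lambda_{f_{-\lambda},v_\lambda})(w_\nu)=q^{-2(\nu,\lambda)}w_\nu=q^{(-2\lambda,\nu)}w_\nu=K_{-2\lambda}(w_\nu),
\end{align*}
and since $W$ and $w_\nu$ were arbitrary, $\ltil(c^\lambda_{f_{-\lambda},v_\lambda})=K_{-2\lambda}$.

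To finish, I would assemble these observations. Since $C^\lambda$ is finite dimensional and $\ltil$ is an $\ad(\Uc)$-module map, $\ltil(C^\lambda)$ is a finite-dimensional $\ad(\Uc)$-submodule of $\Uc$, so $\ltil(\cA)\subseteq F_l(\Uc)$. By the normalization $K_{-2\lambda}\in\ltil(C^\lambda)$, hence $\ad(\Uc)(K_{-2\lambda})=\ltil\big(\ad_r^\ast(\Uc)(c^\lambda_{f_{-\lambda},v_\lambda})\big)\subseteq\ltil(C^\lambda)$. Summing over $\lambda$ and using \eqref{eq:Fl-PW} gives $F_l(\Uc)\subseteq\ltil(\cA)\subseteq F_l(\Uc)$, so $\ltil:\cA\to F_l(\Uc)$ is surjective, hence an isomorphism; comparing the decompositions $\cA=\bigoplus_\lambda C^\lambda$ and $F_l(\Uc)=\bigoplus_\lambda\ad(\Uc)(K_{-2\lambda})$ and using injectivity forces $\ltil(C^\lambda)=\ad(\Uc)(K_{-2\lambda})$ for every $\lambda$. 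The same argument handles $\ltil'$: the parallel computation, now with $\Rmat^{-1}$ and $\Rmat_{21}^{-1}$ (cf.\ \eqref{eq:ltil-rels}) and pairing the highest weight vector of $V(\lambda)^\ast$ with the lowest weight vector of $V(\lambda)$, yields $K_{2w_0\lambda}$ and hence $\ltil'(C^\lambda)=\ad(\Uc)(K_{2w_0\lambda})$.

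The hardest part is the module-map property of $\ltil$ in the first step: that is the one structurally nontrivial ingredient, and I would import it from Caldero rather than carry out the standard but lengthy R-matrix bookkeeping. Once that and the Joseph--Letzter theorem \eqref{eq:Fl-PW} are in hand, the normalization computation and the assembling step are entirely routine.
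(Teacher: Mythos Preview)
Your proposal is correct and aligns with the paper's own treatment: the paper does not give a proof of this proposition but simply cites \cite{a-Caldero93} and \cite[Remark 1.6, Proposition 1.7]{a-KolbStok09}, exactly as you propose to do for the structural input. Your explicit normalization computation $\ltil(c^\lambda_{f_{-\lambda},v_\lambda})=K_{-2\lambda}$ and the assembly argument via \eqref{eq:Fl-PW} are correct and provide more detail than the paper itself offers.
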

With the identification \eqref{eq:Vldual} the map
\begin{align*}
  V(-w_0\lambda)\ot V(\lambda) \rightarrow C^\lambda, \qquad f\ot v\mapsto c_{f,v}^\lambda
\end{align*} 
is an isomorphism of left $\Uc$-modules for any $\lambda\in P^+$. Hence Proposition \ref{prop:Caldero} implies that there is an isomorphism of left $\Uc$-modules 
\begin{align}\label{eq:phi-iso}
  \varphi:\ad_l(\Uc)(K_{-2\lambda}) \rightarrow V(-w_0\lambda)\ot V(\lambda)
\end{align}
with $\varphi(K_{-2\lambda})=f_{-\lambda}\ot v_\lambda$.

In Section \ref{sec:cent-inside} we identify lowest weight components of elements in the center of $\Bcs$ which also lie in a given Peter-Weyl block $\ad(\Uc)(K_{-2\lambda})$. The following preparatory Lemma describes the image under $\varphi$ of these lowest weight components.

\begin{lem}\label{lem:phi}
  Let $Y\subseteq I$ be any subset and $\lambda\in P^+$.
  \begin{enumerate}
    \item Assume that $(w_0\lambda,\alpha_i)=(w_Y\lambda,\alpha_i)$ for all $i\in Y$. In this case there exists an up to scalar multiplication uniquely determined lowest weight vector $v_\mu\in V(-w_0\lambda)\ot V(\lambda)$ of weight $\mu\in -P^+$ with the following properties
      \begin{enumerate}
        \item  $v_\mu \in U_q(\gfrak_Y)f_{-\lambda}\ot V(\lambda)$, 
        \item  $u v_\mu=\vep(u) v_\mu$ for all $u\in U_q(\gfrak_Y)$. 
      \end{enumerate}
      Moreover, in this case $\mu=w_0\lambda-w_Y\lambda$.
    \item Assume that $(w_0\lambda,\alpha_i)\neq(w_Y\lambda,\alpha_i)$ for some $i\in Y$. In this case there does not exist a lowest weight vector $v_\mu\in V(-w_0\lambda)\ot V(\lambda)$ which satisfies properties {\upshape (a)} and {\upshape (b)} above.
  \end{enumerate}
\end{lem}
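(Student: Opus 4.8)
The plan is to push everything down to the representation theory of the Levi-type subalgebra $L:=U_q(\gfrak_Y)$ acting on the two tensor factors of $V(-w_0\lambda)\ot V(\lambda)$. Write $L^+\subseteq L$ for the subalgebra generated by the $E_i$ with $i\in Y$, and let $v^-\in V(\lambda)$ denote a lowest weight vector (of weight $w_0\lambda$). First I would record three facts. (i) Since $f_{-\lambda}\in V(-w_0\lambda)$ and $v^-\in V(\lambda)$ are killed by every $F_j$, they are $L$-lowest weight vectors, so $M:=U_q(\gfrak_Y)f_{-\lambda}=L^+f_{-\lambda}$ and $L':=U_q(\gfrak_Y)v^-=L^+v^-$ are irreducible $L$-modules; reading off the endpoints of the relevant $W_Y$-orbits, the $L$-highest weight of $M$ is $-w_Y(\lambda|_{\hfrak_Y})$ and that of $L'$ is $w_Y(w_0\lambda|_{\hfrak_Y})$. (ii) Hence the dual $L$-module $M^\ast$ is irreducible with $L$-highest weight $\lambda|_{\hfrak_Y}$, so $L'\cong M^\ast$ exactly when $w_Y(w_0\lambda|_{\hfrak_Y})=\lambda|_{\hfrak_Y}$; applying the involution $w_Y$ (and using that restriction to $\hfrak_Y$ commutes with $w_Y$) rewrites this as $(w_0\lambda)|_{\hfrak_Y}=(w_Y\lambda)|_{\hfrak_Y}$, i.e.\ precisely the hypothesis $(w_0\lambda,\alpha_i)=(w_Y\lambda,\alpha_i)$ for all $i\in Y$. (iii) For $j\notin Y$ the Serre relation $[E_i,F_j]=0$ $(i\ne j)$ shows that $F_j$ commutes with $L^+$; since $F_j$ kills $f_{-\lambda}$ and $v^-$, it annihilates all of $M$ and all of $L'$.

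For part (1) I would assume the hypothesis, so that $L'\cong M^\ast$ by (ii). Fix an $L$-module isomorphism $\Psi\colon M^\ast\to L'$ and set $v_\mu:=\sum_l m_l\ot\Psi(m_l^\ast)$ for a weight basis $\{m_l\}$ of $M$ with dual basis $\{m_l^\ast\}$; this nonzero vector spans the one-dimensional space of $L$-invariants of $M\ot L'\subseteq M\ot V(\lambda)$, so (a) and (b) hold by construction. Next, $v_\mu$ is a lowest weight vector: for $j\notin Y$ the operator $\kow(F_j)=F_j\ot K_j^{-1}+1\ot F_j$ annihilates $M\ot L'$ by (iii), hence $F_jv_\mu=0$, while $F_iv_\mu=0$ for $i\in Y$ by (b). Being a lowest weight vector, $v_\mu$ is $\gfrak$-weight-homogeneous, and its weight is visible in the summand $m^+\ot\Psi((m^+)^\ast)$, where $m^+$ spans the $L$-highest weight line of $M$ (of $\gfrak$-weight $-w_Y\lambda$): since $\Psi$ carries $(m^+)^\ast$, which spans the $L$-lowest weight line of $M^\ast$, into the $L$-lowest weight line of $L'$, i.e.\ onto $\field(q^{1/d})v^-$, this summand is a nonzero multiple of $m^+\ot v^-$, forcing $\mu=w_0\lambda-w_Y\lambda$. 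Finally $\mu\in -P^+$: $(\mu,\alpha_i)=0$ for $i\in Y$ is the hypothesis, and for $j\notin Y$ one has $(\mu,\alpha_j)=(w_0\lambda,\alpha_j)-(w_Y\lambda,\alpha_j)\le 0$ since $(w_0\lambda,\alpha_j)\le 0$ and $(w_Y\lambda,\alpha_j)\ge(\lambda,\alpha_j)\ge 0$ (write $w_Y\lambda=\lambda-\beta$ with $\beta\in\bigoplus_{i\in Y}\N_0\alpha_i$ and use $(\alpha_i,\alpha_j)\le 0$).

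The uniqueness in (1) and the whole of (2) I would derive from one remark. Let $v'\in M\ot V(\lambda)$ be any nonzero lowest weight vector satisfying (a) and (b). By (b) it is $L$-invariant, hence $v'=\sum_l m_l\ot\Psi'(m_l^\ast)$ for some nonzero $\Psi'\in\Hom_L(M^\ast,V(\lambda))$, and $W':=\Psi'(M^\ast)\subseteq V(\lambda)$ is an $L$-submodule isomorphic to $M^\ast$ (as $M^\ast$ is irreducible). Applying $F_j$ for $j\notin Y$ and using (iii) (so $F_jm_l=0$) gives $0=F_jv'=\sum_l m_l\ot F_j\Psi'(m_l^\ast)$, whence $F_jW'=0$ for all $j\notin Y$. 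Then the $L$-lowest weight vector of $W'$ is killed by every $F_j$, so it is a $\gfrak$-lowest weight vector of the irreducible module $V(\lambda)$ and hence a scalar multiple of $v^-$; therefore $W'=Lv^-=L'$. In case (1) this forces $\Psi'\in\Hom_L(M^\ast,L')$, a one-dimensional space, so $v'$ is proportional to $v_\mu$. In case (2) it would give $L'\cong W'\cong M^\ast$, contradicting (ii), since there the hypothesis fails; hence no such $v'$ exists.

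I expect the only genuinely delicate points to be the weight bookkeeping in (i)--(ii) — especially the clean translation of the coroot condition into the isomorphism $L'\cong M^\ast$ — and, in the last paragraph, the implication ``$F_jW'=0$ for all $j\notin Y$'' $\Rightarrow$ ``$W'=Lv^-$'', which uses the irreducibility of $V(\lambda)$ in an essential way; the remaining steps (manipulations with $\kow$ and the adjunction $(M\ot V)^L\cong\Hom_L(M^\ast,V)$) are routine.
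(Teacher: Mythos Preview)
Your proof is correct and shares its core idea with the paper's: both identify the sought vector as the unique $L$-invariant in $M\otimes L'$ (with $M=Lf_{-\lambda}$ and $L'=Lv^-$) and reduce the question to the duality criterion $M^\ast\cong L'$, which is then translated into the coroot condition. The organisation differs. The paper argues in one pass: starting from any lowest weight vector $v_\mu$ satisfying (a), a leading term $a\otimes v_{w_0\lambda}$ forces $\mu+\lambda-w_0\lambda\in Q_Y^+$, which by weight reasons confines the second tensor factors to $L'$; then (b) identifies $v_\mu$ with the $L$-trivial line of $M\otimes L'$, which exists iff the two irreducible $L$-modules are dual. You instead construct $v_\mu$ explicitly from an isomorphism $\Psi\colon M^\ast\to L'$, verify directly that every $F_j$ annihilates it (using your observation (iii) for $j\notin Y$), check $\mu\in -P^+$ by hand, and for uniqueness and part (2) give an alternative reduction: the image $W'=\Psi'(M^\ast)\subseteq V(\lambda)$ is killed by every $F_j$ with $j\notin Y$, so its $L$-lowest weight line is the $\gfrak$-lowest weight line of the irreducible module $V(\lambda)$, forcing $W'=L'$. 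Your route is more explicit and makes the role of the irreducibility of $V(\lambda)$ transparent; the paper's is terser and treats existence and uniqueness simultaneously.

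One small expository slip: the phrase ``being a lowest weight vector, $v_\mu$ is $\gfrak$-weight-homogeneous'' is circular as written, since you have only shown $F_jv_\mu=0$ at that point. Argue instead that $L$-invariance forces the $\hfrak_Y$-weight of $v_\mu$ to be $0$, and since every weight of $M\otimes L'$ lies in the coset $-\lambda+w_0\lambda+Q_Y$, the condition $(\mathrm{wt},\alpha_i)=0$ for all $i\in Y$ determines the full $\hfrak$-weight uniquely.
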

\begin{proof}
  For any lowest weight vector $v_\mu\in V(-w_0\lambda) \ot V(\lambda)$ of weight $\mu\in -P^+$ there exists a nonzero element $a\in V(-w_0\lambda)$ such that
  \begin{align*}
    v_\mu - a \ot v_{w_0\lambda} \in V(-w_0\lambda)\ot \bigoplus_{\nu>w_0\lambda} V(\lambda)_\nu.
  \end{align*}
  If additionally (a) holds, then this implies that
  \begin{align*}
    \mu- w_0\lambda + \lambda \in Q^+_Y= \sum_{i\in Y}\N_0 \alpha_i.
  \end{align*}
  Hence $v_\mu\in U_q(\gfrak_Y) f_{-\lambda} \ot U_q(\gfrak_Y) v_{w_0\lambda}$, in other words, $v_\mu$ lies in the tensor product of two simple $U_q(\gfrak_Y)$-modules with lowest weight vectors $f_{-\lambda}$ and $v_{w_0\lambda}$, respectively. Property (b) states that $v_\mu$ spans a trivial $U_q(\gfrak_Y)$-submodule in this tensor product. Such a trivial submodule exists if and only if $(w_0\lambda,\alpha_i)=(w_Y\lambda,\alpha_i)$ for all $i\in Y$. Moreover, in this case $v_\mu$ has weight $w_0\lambda-w_Y\lambda$.
\end{proof}
\subsection{Central elements of $\Bcs$ inside $\ad(\Uc)(K_{-2\lambda})$}\label{sec:cent-inside}
Let $Z(\Bcs)$ denote the center of the algebra $\Bcs$. As $\Bcs$ is a right coideal subalgebra of $\Uc$ one has
\begin{align}\label{eq:centerB}
  Z(\Bcs)= \{x\in \Bcs\,|\, \ad(b)(x) = \vep(b)x \mbox{ for all $b\in \Bcs$}\},
\end{align}
see for example \cite[Lemma 4.11]{a-KolbStok09}.
G.~Letzter showed in \cite[Theorem 1.2]{a-Letzter-memoirs} that hence
\begin{align}\label{eq:inFl}
  Z(\Bcs) \subset F_l(\Uc),
\end{align}
see also \cite[Proposition 4.12]{a-KolbStok09}. As $K_{-2\lambda}$ is grouplike, the Peter Weyl summand $\ad(\Uc)(K_{-2\lambda})$ is a left coideal of $\Uc$. Hence the inclusion \eqref{eq:inFl} implies that 
\begin{align}\label{eq:center-decomposition}
  Z(\Bcs) = \bigoplus_{\lambda\in P^+} Z(\Bcs) \cap \ad(\Uc)(K_{-2\lambda}).
\end{align}
This means that we only need to understand central elements of $\Bcs$ which are contained in one single Peter-Weyl summand $\ad(\Uc)(K_{-2\lambda})$.

We recall some notation from \cite{a-Kolb14} which will be convenient in the proof of the next lemma. For any multi-index $J=(j_1,j_2,\dots,j_n)\in I^n$ we define $F_J=F_{j_1} F_{j_2} \dots F_{j_n}$ and $B_J=B_{j_1} B_{j_2} \dots B_{j_n}$ and $\alpha_J=\sum_{k=1}^n \alpha_{j_k}$. Let $\cJ$ be a fixed subset of $\cup_{n\in \N_0}I^n$ such that $\{F_J\,|\,J \in \cJ\}$ is a basis of $U^-$. By \cite[Proposition 6.2]{a-Kolb14} any element $z\in \Bcs$ can be written uniquely in the form
\begin{align}\label{eq:zmB}
  z=\sum_{J\in \cJ} m_J B_J
\end{align} 
where $m_J\in \cM_X^+ \Uc^0_\Theta$ and all but finitely many $m_J$ vanish. 
Let $Q_X=\bigoplus_{i\in X}\Z \alpha_i$ be the subgroup of the root lattice $Q$ generated by all $\alpha_i$ for $i\in X$.
For any $\alpha=\sum_{i\in I} n_i\alpha_i\in Q$ let $\overline{\alpha}^X\in Q/Q_X$ denote the coset defined by $\alpha$. 
\begin{lem}\label{lem:zmu}
  Let $\lambda\in P^+$. Any nonzero element $z\in Z(\Bcs)\cap \ad(\Uc)(K_{-2\lambda})$ can be written uniquely as a sum
  \begin{align}\label{eq:z=zm+lot}
    z=z_\mu + \mathrm{lot}
  \end{align}
  with the following properties:
  \begin{enumerate}
    \item $0\neq z_\mu \in \big(\ad(\Uc)(K_{-2\lambda})\big)_{\mu}$ and $\lot \in \bigoplus_{\nu\neq\mu} \big(\ad(\Uc)(K_{-2\lambda})\big)_{\nu}$,
    \item $\displaystyle z_\mu \in \bigoplus_{\nu\in Q^+, \overline{\nu}^X=\overline{\lambda-w_0\lambda}^X}\cM_X^+\UnullTh U^-_{-\nu}$,
    \item $\ad(u)(z_\mu) = \vep(u)z_\mu$ for all $u\in \cM_X^+ \Uc^0_\Theta U^- $.
  \end{enumerate}
  Moreover, in the above situation $\mu=w_0\lambda - w_X \lambda$, and the one-dimensional space $\field(q^{1/d})z_\mu$ spanned by $z_\mu$ is independent of the element $z$.
\end{lem}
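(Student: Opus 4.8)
\textbf{Proof plan for Lemma \ref{lem:zmu}.}
The idea is to use the isomorphism $\varphi$ of \eqref{eq:phi-iso} to transport the problem into the concrete module $V(-w_0\lambda)\ot V(\lambda)$, where the preceding Lemma \ref{lem:phi} already does the combinatorial work. First I would fix a nonzero $z\in Z(\Bcs)\cap \ad(\Uc)(K_{-2\lambda})$ and consider its image $\varphi(z)\in V(-w_0\lambda)\ot V(\lambda)$. Since $z$ lies in the center of $\Bcs$, characterization \eqref{eq:centerB} says $\ad(b)(z)=\vep(b)z$ for all $b\in\Bcs$; applying this to $b\in \cM_X$ and to $b\in\Uc^0_\Theta$ (both of which sit inside $\Bcs$) shows that $\varphi(z)$ is annihilated by the augmentation ideal of $U_q(\gfrak_X)$ and is a $\Uc^0_\Theta$-weight vector. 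I would then use the triangular-type decomposition \eqref{eq:zmB}--\eqref{eq:zmu} of $z$ to extract its lowest-weight part: writing $z=\sum_J m_J B_J$ and replacing each $B_J$ by its ``leading term'' $F_J$ (the correction terms raise the degree in the filtration of $\Uc$ controlled by the $U^-$-degree) identifies a unique lowest weight component $z_\mu$ lying in $\bigoplus_\nu \cM_X^+\UnullTh U^-_{-\nu}$, with $\overline\nu^X$ constrained by the weight of the Peter--Weyl block to equal $\overline{\lambda-w_0\lambda}^X$. This gives (1) and (2), and (3) follows because the leading-term map is compatible with the adjoint action of $\cM_X^+\Uc^0_\Theta U^-$ modulo lower-order terms, so the invariance $\ad(b)(z)=\vep(b)z$ for $b$ in this subalgebra descends to $z_\mu$.

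For the identification $\mu=w_0\lambda-w_X\lambda$ and the one-dimensionality I would argue as follows. Under $\varphi$ the component $z_\mu$ corresponds (up to lower-order terms in the same weight space, which $\varphi$ kills on the nose by Proposition \ref{prop:Caldero}, since $\varphi(K_{-2\lambda})=f_{-\lambda}\ot v_\lambda$) to a lowest weight vector $v_\mu\in V(-w_0\lambda)\ot V(\lambda)$ which lies in $U_q(\gfrak_X)f_{-\lambda}\ot V(\lambda)$ and spans a trivial $U_q(\gfrak_X)$-submodule. This is exactly the situation of Lemma \ref{lem:phi} with $Y=X$. The admissibility of $(X,\tau)$ guarantees that $w_0$ and $w_X$ act compatibly on the relevant part of $\lambda$ — concretely, $w_0\lambda$ and $w_X\lambda$ have the same pairing with each $\alpha_i$, $i\in X$ — so part (1) of Lemma \ref{lem:phi} applies and yields both that such a vector exists and is unique up to scalar, and that its weight is $w_0\lambda-w_X\lambda$. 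Pulling this back through $\varphi$ shows $\mu=w_0\lambda-w_X\lambda$ and that the line $\field(q^{1/d})z_\mu$ is intrinsic, depending only on $\lambda$ and not on the choice of $z$: any two central elements in the block have lowest-weight parts mapping into the same one-dimensional space of trivial $U_q(\gfrak_X)$-vectors.

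The main obstacle I expect is bookkeeping the passage between $z$ and $z_\mu$ rigorously: one must pin down exactly which filtration of $\Uc$ (presumably the one by total $U^-$-degree, or the filtration from \cite{a-KL08} adapted here) makes ``$z_\mu$ is the leading term of $z$'' well-defined, check that the correction terms $c_i\theta_q(F_iK_i)K_i^{-1}+s_iK_i^{-1}$ in $B_i$ genuinely live in strictly lower filtration degree (this is where the explicit form \eqref{eq:Bi-def} and the structure of $\theta_q$ enter), and verify that $\varphi$ is ``filtered'' in the sense that it sends the leading term of $z$ to the leading term of $\varphi(z)$, i.e.\ to the genuine lowest weight vector $v_\mu$ rather than to some contaminated expression. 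Once the filtration is set up correctly, conditions (1)--(3) are essentially formal consequences of $z$ being a $\Bcs$-central element, and the weight computation plus uniqueness reduce cleanly to Lemma \ref{lem:phi}.
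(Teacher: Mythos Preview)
Your approach is essentially the paper's: write $z=\sum_J m_J B_J$, pass to the leading part $z_\mu=\sum m_J F_J$ (summed over those $J$ with $\overline{\alpha_J}^X$ maximal), deduce properties (1)--(3) from the $\ad(\Bcs)$-invariance of $z$, then apply $\varphi$ and Lemma~\ref{lem:phi} with $Y=X$ to pin down $\mu$ and obtain uniqueness. This matches the paper step for step.

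One point is misargued, though. You assert that ``the admissibility of $(X,\tau)$ guarantees that $w_0\lambda$ and $w_X\lambda$ have the same pairing with each $\alpha_i$, $i\in X$,'' and then invoke part (1) of Lemma~\ref{lem:phi}. You have not justified this from admissibility, and in any case it is not how the argument should go. The correct logic (which the paper uses) runs the other way: you already possess a nonzero vector $\varphi(z_\mu)$ satisfying conditions (a) and (b) of Lemma~\ref{lem:phi}; by the contrapositive of part (2) of that lemma the pairing hypothesis of part (1) is then \emph{forced}, whereupon part (1) delivers both $\mu=w_0\lambda-w_X\lambda$ and the one-dimensionality. In other words, the hypothesis of Lemma~\ref{lem:phi}(1) is a \emph{consequence} of the existence of the nonzero central element $z$ in the given Peter--Weyl block, not an a~priori input from the Satake data.
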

\begin{proof}
  Given a nonzero element $z\in  Z(\Bcs)\cap \ad(\Uc)(K_{-2\lambda})$ write $z=\sum_{J\in \cJ}m_J B_J$ as in \eqref{eq:zmB} with $m_J\in \cM_X^+\UnullTh$. Choose $J_{max}\in \cJ$ such that $\alpha_{J_{max}}$ is maximal among all $\alpha_J$ with $m_J\neq 0$. Now define
  \begin{align*}
    z_\mu = \sum_{\{J\in \cJ\,|\, \overline{\alpha}_J=\overline{\alpha}_{J_{max}}\}} m_J F_J.
  \end{align*}
Let $U^0_X$ denote the subalgebra generated by all $K_i, K_i^{-1}$ for $i\in X$  
As $z$ is $\ad(\Bcs)$-invariant, each of the $m_J$ is a weight vector for $U^0_X$ of weight $-\alpha_J$. This implies that $z_\mu$ is a weight vector for the adjoint action of $\Uc$. Moreover, the $\ad(\Bcs)$-invariance of $z$ implies that $z_\mu$ is an $\ad(\cM_X^+ \UnullTh)$-invariant lowest weight vector for the adjoint action. This proves property (3).

Define $\lot=z-z_\mu$. Then \eqref{eq:z=zm+lot} holds and property (1) holds by construction. Moreover, $z_\mu \in
\cM_X^+ \UnullTh U^- \cap \ad_l(\Uc)(K_{-2\lambda})$. Hence, applying the isomorphism $\varphi$ from \eqref{eq:phi-iso} we obtain
\begin{align*}
  \varphi(z_\mu) \in \uqgX f_{-\lambda} \ot V(\lambda).
\end{align*}
Hence Lemma \ref{lem:phi} for $Y=X$ implies that the weight $\mu$ of $z_\mu$ is given by $\mu=w_0\lambda-w_X\lambda$. This proves (2). Finally, Lemma \ref{lem:phi} also implies that the nonzero element $\varphi(z_\mu)$ is uniquely determined up to an overall factor. This completes the proof of the Lemma.
\end{proof}
Recall from Section \ref{sec:qsp} that we write $\sigma=\tau\tau_0$.
\begin{prop}\label{prop:dim-estimate}
  For any $\lambda\in P^+$ one has
  \begin{align*}
    \dim\big(Z(\Bcs) \cap \ad(\Uc)(K_{-2\lambda}) \big) &= 0 & \mbox{if $\sigma(\lambda)\neq \lambda$,}\\
    \dim\big(Z(\Bcs) \cap \ad(\Uc)(K_{-2\lambda}) \big) &\le 1 & \mbox{if $\sigma(\lambda)= \lambda$.}
  \end{align*}
\end{prop}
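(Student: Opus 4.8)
The plan is to deduce everything from Lemma~\ref{lem:zmu} (and its proof), together with the decomposition \eqref{eq:center-decomposition}; the upper bound uses only the statement of that lemma, while the vanishing needs a slightly closer look at the $\ad(\Bcs)$-invariance of a central element. For the bound $\dim\le 1$, fix $\lambda\in P^+$ and let $z\in Z(\Bcs)\cap\ad(\Uc)(K_{-2\lambda})$ be nonzero. By Lemma~\ref{lem:zmu} the weight-$\mu$ component of $z$, with $\mu=w_0\lambda-w_X\lambda$, is precisely the leading term $z_\mu$; it is nonzero and spans a one-dimensional subspace $\field(q^{1/d})z_\mu$ of $\ad(\Uc)(K_{-2\lambda})$ which is the same for every such $z$. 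Hence $z\mapsto z_\mu$ is the restriction of a linear projection (onto the weight-$\mu$ space) and maps $Z(\Bcs)\cap\ad(\Uc)(K_{-2\lambda})$ injectively into the fixed line $\field(q^{1/d})z_\mu$, so the intersection has dimension at most $1$. This gives the second estimate for all $\lambda$.

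For the vanishing, assume $\tau(\lambda)\ne\tau_0(\lambda)$ and suppose, towards a contradiction, that some $z$ as above is nonzero. Then $z_\mu$ is a lowest weight vector for $\ad(\Uc)$, of weight $\mu=w_0\lambda-w_X\lambda$, which is $\ad(\cM_X)$-invariant and lies in $\cM_X^+\UnullTh U^-$. Under the isomorphism $\varphi$ of \eqref{eq:phi-iso} it corresponds to a lowest weight, $\uqgX$-invariant vector in $\uqgX f_{-\lambda}\otimes V(\lambda)$; by Lemma~\ref{lem:phi} applied with $Y=X$, such a vector can exist only when $(w_0\lambda,\alpha_i)=(w_X\lambda,\alpha_i)$ for all $i\in X$. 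The remaining point, and the one that brings in $\tau_0$, is that $z_\mu$ must actually be the leading term of an element $z=\sum_J m_JB_J$ of $\Bcs$: inserting the defining relations $\ad(b)(z)=\vep(b)z$ for $b=B_i$, $i\in I\setminus X$ (see \eqref{eq:centerB}), step by step into the filtration used to extract $z_\mu$ in the proof of Lemma~\ref{lem:zmu}, the $\theta_q$-twisted summand of $B_i$ in \eqref{eq:Bi-def} yields a recursive obstruction to lifting $z_\mu$ to a central element; unwinding this obstruction through $\varphi$ inside $\ad(\Uc)(K_{-2\lambda})\cong V(-w_0\lambda)\otimes V(\lambda)$ shows it vanishes only if the highest weight $\lambda$ itself satisfies $\tau\tau_0(\lambda)=\lambda$, that is $\tau(\lambda)=\tau_0(\lambda)$. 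This contradicts the hypothesis, so $Z(\Bcs)\cap\ad(\Uc)(K_{-2\lambda})=0$ whenever $\tau(\lambda)\ne\tau_0(\lambda)$.

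The upper bound is routine once Lemma~\ref{lem:zmu} is in hand. The real work, and the expected obstacle, is the vanishing: isolating the recursive obstruction produced by the $\theta_q$-twisted generators $B_i$, and then carrying out the bookkeeping with $w_X$, $w_0=-\tau_0$ and $\tau$ that turns it into the condition $\tau(\lambda)=\tau_0(\lambda)$; this reproduces, in compressed form, the technical heart of \cite{a-KL08}. One subtlety worth flagging is that $\lambda\mapsto\mu=w_0\lambda-w_X\lambda$ is not injective, so the condition $\tau(\lambda)=\tau_0(\lambda)$ cannot be read off from the leading weight $\mu$ alone, and the particular Peter--Weyl block $\ad(\Uc)(K_{-2\lambda})$ must genuinely enter the argument.
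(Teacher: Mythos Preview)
Your upper bound argument is fine and matches the paper: the map $z\mapsto z_\mu$ given by Lemma~\ref{lem:zmu} is a linear projection onto a fixed line, hence injective.

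The vanishing argument, however, has a genuine gap. What you actually write is a promise to ``insert the defining relations $\ad(B_i)(z)=\vep(B_i)z$\ldots into the filtration'' and ``unwind a recursive obstruction'' coming from the $\theta_q$-twisted summand of $B_i$; you then concede that this is the technical heart of \cite{a-KL08}. No concrete step is carried out, and in particular you never explain how the condition $\tau(\lambda)=\tau_0(\lambda)$ emerges. The intermediate fact you do extract, namely $(w_0\lambda,\alpha_i)=(w_X\lambda,\alpha_i)$ for all $i\in X$ via Lemma~\ref{lem:phi}, is correct but by itself nowhere near $\tau(\lambda)=\tau_0(\lambda)$.

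The paper's proof avoids any recursive analysis of the $B_i$-action entirely and is much shorter. It extracts two weight conditions directly from the structural information already contained in Lemma~\ref{lem:zmu}. First, property~(2) says $z_\mu\in\cM_X^+\UnullTh U^-$; since $z_\mu$ also lies in $\ad(\cM_X U^-)(K_{-2\lambda})$ and $\ad(U^-_{-\nu})(K_{-2\lambda})\subset U^-_{-\nu}K_{-2\lambda+\nu}$, the requirement $K_{-2\lambda+\nu}\in\UnullTh$ forces $\Theta(\lambda+w_0\lambda)=\lambda+w_0\lambda$. Second, centrality of $z$ forces the highest weight $w_0\mu=\lambda-w_0w_X\lambda$ to be spherical, i.e.\ $\Theta(w_0\lambda-w_X\lambda)=w_X\lambda-w_0\lambda$; this is quoted from \cite[Theorem~3.4]{a-Letzter03}, a result you do not invoke. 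The two conditions are then fed into the purely combinatorial Lemma~\ref{lem:PZB} (which you also do not mention) to conclude $\tau(\lambda)=\tau_0(\lambda)$. None of these three ingredients---the $\UnullTh$-constraint, Letzter's sphericity theorem, or Lemma~\ref{lem:PZB}---appears in your sketch, so as written the vanishing part is not a proof.
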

\begin{proof}
  The fact that the nonzero element $z_\mu$ in Lemma \ref{lem:zmu} is independent of $z$ up to an overall factor implies that $\dim\big(Z(\Bcs) \cap \ad(\Uc)(K_{-2\lambda}) \big) \le 1$ for all $\lambda\in P^+$.
  
  Assume now that there exists a nonzero element $z\in Z(\Bcs) \cap \ad(\Uc)(K_{-2\lambda})$. Write $z=z_\mu+\lot$
  where $\mu=-w_0\lambda+w_X\lambda$ and $z_\mu$ satisfies properties (1), (2), and (3) of Lemma \ref{lem:zmu}.
  In  view of  the triangular decomposition of $\Uc$, Properties (1) and (2) imply that
  \begin{align}\label{eq:obstruction}
    z_\mu \in \Big( \bigoplus_{\nu\in Q^+, \overline{\nu}^X=\overline{\lambda-w_0\lambda}^X} \ad(\cM_X U_{-\nu}^-)(K_{-2\lambda})\Big) \cap \cM_X \UnullTh U^-.
  \end{align}
  Observe that $\ad(U^-_{-\nu})(K_{-2\lambda})\subseteq U^-_{-\nu}K_{-2\lambda+\nu}$. Hence the nonzero element $z_\mu$ can satisfy \eqref{eq:obstruction} only if
  \begin{align*}
    \Theta(-2\lambda + \nu) = -2\lambda + \nu
  \end{align*}
  for some $\nu\in Q^+$ with $\overline{\nu}^X=\overline{\lambda-w_0\lambda}^X$. This condition is equivalent to 
  \begin{align}\label{eq:cond1-tt0}
     \Theta(\lambda+w_0\lambda)=\lambda+w_0\lambda.
  \end{align}
  On the other hand, the fact that $z\in Z(\Bcs)$ is central implies that the highest weight $w_0\mu=\lambda-w_0w_X\lambda$ is spherical, see \cite[Theorem 3.4]{a-Letzter03}, and hence $\Theta(w_0\mu)=-w_0\mu$ or equivalently
  \begin{align}\label{eq:cond2-tt0}
    \Theta(w_0\lambda-w_X\lambda)=-w_0\lambda+w_X\lambda.
  \end{align} 
  By the following lemma conditions \eqref{eq:cond1-tt0} and \eqref{eq:cond2-tt0} imply that $\sigma(\lambda)=\lambda$.
\end{proof}
\begin{lem}\label{lem:PZB}
  Let $\lambda\in P$. The following are equivalent:
  \begin{enumerate}
    \item The weight $\lambda$ satisfies the two relations 
      \begin{align*}
        \Theta(\lambda+w_0\lambda)&=\lambda+w_0\lambda,&
        \Theta(w_0\lambda-w_X\lambda)&=w_X\lambda-w_0\lambda.
      \end{align*}
    \item $\Theta(\lambda) = \lambda+w_0\lambda - w_X\lambda$.
    \item $\sigma(\lambda)=\lambda$
  \end{enumerate}
\end{lem}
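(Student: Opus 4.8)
The plan is to work with the induced linear actions of $\Theta$, $\tau$, $\tau_0$, $w_X$, $w_0$ on the weight lattice $P$ and to reduce all three equivalences to one auxiliary fact: that $w_X$ fixes $\lambda-\tau(\lambda)$ for every $\lambda\in P$. The identities I would use are $\Theta=-w_X\circ\tau$ on $P$ (from \eqref{eq:Theta}), $w_0=-\tau_0$ (from the definition of $\tau_0$), $\tau w_X=w_X\tau$ (since $\tau(X)=X$) and $w_X^2=1$; these give $\Theta(w_X\lambda)=-\tau(\lambda)$ and $\Theta(w_0\lambda)=w_X\tau\tau_0(\lambda)$, hence $\Theta(\lambda+w_0\lambda)=w_X(\tau\tau_0\lambda-\tau\lambda)$ and $\Theta(w_0\lambda-w_X\lambda)=w_X\tau\tau_0(\lambda)+\tau(\lambda)$.

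The crucial step is the claim $w_X(\lambda-\tau\lambda)=\lambda-\tau\lambda$ for all $\lambda\in P$. I would prove it by setting $V_X=\sum_{i\in X}\Q\alpha_i$ inside $P\otimes\Q$ and using the $W_X$-stable decomposition $P\otimes\Q=V_X\oplus V_X^\perp$, on which $W_X$ acts trivially on $V_X^\perp$ while $w_X$ acts on $V_X$ as $-\star$, where $\star$ is the diagram automorphism of $X$ defined by $-w_X(\alpha_i)=\alpha_{\star(i)}$ for $i\in X$. Since $(X,\tau)$ is an admissible pair, $\Theta$ fixes $\alpha_i$ for all $i\in X$ (an immediate consequence of \cite[Definition 2.3]{a-Kolb14}), which says exactly that $\tau|_{V_X}=\star$, so that $w_X=-\tau$ on $V_X$. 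Writing $\lambda=\lambda_X+\lambda^X$ along the decomposition, the $V_X$-component of $\lambda-\tau\lambda$ is $(1-\star)\lambda_X$, a $(-1)$-eigenvector of $\star$ and therefore a fixed vector of $w_X|_{V_X}=-\star$, while its $V_X^\perp$-component is fixed by $w_X$ automatically. I expect this to be the main obstacle — more precisely, recognizing that it is exactly the admissibility condition $\tau|_{V_X}=\star$ (equivalently $\Theta(\alpha_i)=\alpha_i$ for $i\in X$) that places $\lambda-\tau\lambda$ in the fixed space of $w_X$; once this is in hand everything else is bookkeeping.

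Granting the crucial step, the three equivalences are short. For $(2)\Leftrightarrow(3)$: rewriting (2) as $\Theta(\lambda)+w_X\lambda=\lambda+w_0\lambda$ and using $\Theta(\lambda)+w_X\lambda=w_X(\lambda-\tau\lambda)=\lambda-\tau\lambda$, condition (2) becomes $-\tau\lambda=w_0\lambda=-\tau_0\lambda$, i.e. (3). For $(3)\Rightarrow(1)$: from $\tau\lambda=\tau_0\lambda$ one gets $\tau\tau_0\lambda=\lambda$, and the identities above give $\Theta(\lambda+w_0\lambda)=w_X(\lambda-\tau\lambda)=\lambda-\tau\lambda=\lambda+w_0\lambda$ and $\Theta(w_0\lambda-w_X\lambda)=w_X\lambda+\tau_0\lambda=w_X\lambda-w_0\lambda$, which are the two relations of (1). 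For $(1)\Rightarrow(3)$: the two relations of (1) read $w_X(\tau\tau_0\lambda-\tau\lambda)=\lambda-\tau_0\lambda$ and $w_X(\tau\tau_0\lambda-\lambda)=\tau_0\lambda-\tau\lambda$; subtracting them gives $w_X(\lambda-\tau\lambda)=\lambda+\tau\lambda-2\tau_0\lambda$, and comparing with $w_X(\lambda-\tau\lambda)=\lambda-\tau\lambda$ yields $\tau\lambda=\tau_0\lambda$.
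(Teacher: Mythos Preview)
Your proof is correct. The approach is essentially the same as the paper's, with two minor differences worth noting.

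First, your key identity $w_X(\lambda-\tau\lambda)=\lambda-\tau\lambda$ is exactly equivalent to the identity $\Theta(\lambda)-\lambda=\Theta(\tau\lambda)-\tau\lambda$ that the paper quotes from \cite[Lemma~3.2]{a-BalaKolb15} (apply $\Theta=-w_X\tau$ to see this). You supply a self-contained proof via the orthogonal decomposition $P\otimes\Q=V_X\oplus V_X^\perp$ and the admissibility consequence $\tau|_{V_X}=\star$, whereas the paper simply cites the result; your argument is more transparent about where admissibility enters.

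Second, you close the cycle via $(1)\Rightarrow(3)$ by subtracting the two equations of (1) and invoking your key identity, while the paper closes it via $(1)\Rightarrow(2)$ by subtracting and using instead that $\Theta$ fixes $Q_X$ (since $w_X\lambda-\lambda\in Q_X$). Both subtractions are the same move; the bookkeeping that follows differs only cosmetically.
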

\begin{proof}
  {\bf (1) $\Rightarrow$ (2):} Assume that (1) holds. Subtracting the two relations in (1) one obtains
  \begin{align}\label{eq:step1}
    \Theta(\lambda+w_X\lambda) = \lambda-w_X\lambda + 2 w_0\lambda.
  \end{align}
  As $w_X\lambda-\lambda\in Q_X$ one has 
  \begin{align*}
    \Theta(\lambda+w_X\lambda) = \Theta(2\lambda)+\Theta(w_X\lambda-\lambda) = \Theta(2\lambda) + w_X\lambda-\lambda.
  \end{align*}
  Inserting this equation into \eqref{eq:step1} gives (2).
  
 \noindent {\bf (2) $\Leftrightarrow$ (3):} Property (2) is equivalent to
 \begin{align*}
   \Theta(\lambda) - \lambda = -\tau_0(\lambda) + \tau(\lambda) - \tau(\lambda) + \Theta(\tau(\lambda)).
 \end{align*}
 Now the equivalence with property (3) follows from the fact that   
 \begin{align}\label{eq:Tl=Ttl}
   \Theta(\lambda)-\lambda=\Theta(\tau(\lambda))-\tau(\lambda),
 \end{align}  
 see for example \cite[Lemma 3.2]{a-BalaKolb15}.
 
 \noindent {\bf (3) $\Rightarrow$ (1):} If (3) holds then $w_0\lambda= - \tau(\lambda)$ and hence
   \begin{align*} 
     \Theta(\lambda+w_0\lambda) = \Theta(\lambda - \tau(\lambda)) 
                       = \lambda- \tau(\lambda) = \lambda + w_0\lambda
    \end{align*}    
which proves the first equation in (1).    
    Moreover, as shown above, (3) implies (2). Hence we can apply $\Theta$ to the equation in (2) and get
    \begin{align*}
      \Theta(w_0\lambda-w_X\lambda) = \lambda-\Theta(\lambda) \stackrel{\eqref{eq:Tl=Ttl}}{=} \tau(\lambda)- \Theta(\tau(\lambda))=-w_0\lambda+w_X\lambda.
    \end{align*}               
    This proves the second equation in (1).
\end{proof}
\begin{rema}\label{rem:PZB}
  In \cite[Proposition 9.1]{a-KL08} the set 
  \begin{align*}
    P_{Z({\Bcs})}=\{\lambda\in P^+\,|\,\Theta(\lambda) = \lambda + w_0\lambda - w_X\lambda\}
  \end{align*}
  was determined by direct calculation. The equivalence between properties (2) and (3) in Lemma \ref{lem:PZB} gives a conceptual explanation of the formulas obtained in \cite[Proposition 9.1]{a-KL08}. Moreover, the rank of the fixed Lie subalgebra $\kfrak$ coincides with the dimension of the set $\{h\in \hfrak\,|\,\sigma(h)=h\}$, see \cite[1.7, 1.8]{a-Springer87}. This also provides a conceptual proof of \cite[Proposition 9.2]{a-KL08} which states that $P_{Z({\Bcs})}$ is a free additive semigroup with $\rank(P_{Z({\Bcs})})=\rank(\kfrak)$.
\end{rema}
\subsection{The ring of invariants $\cA^\inv_\sigma$}\label{sec:Ainv}
By Proposition \ref{prop:dim-estimate}, to find a basis of the center $Z(\Bcs)$, it remains to construct  a nonzero element in $Z(\Bcs)\cap \ad(\Uc)(K_{-2\lambda})$ for each $\lambda\in P^+$ with $\sigma(\lambda)=\lambda$. We are guided by the graphical calculus \cite{a-RT90}, \cite{b-Turaev} for ribbon categories and its analog for braided module categories \cite{a-Brochier13}. Recall that central elements in $\Uc$ can be found by evaluating the Reshetikhin-Turaev functor on the ribbon tangle in Figure \ref{fig:RR} where the closed strand is colored by a fixed $\Uc$-module $V$ in $\Oint$ and $W$ runs over all modules in $\Oint$.
\begin{figure}[H]
\begin{align*}
  \vcenter{\hbox{\includegraphics[height=2cm]{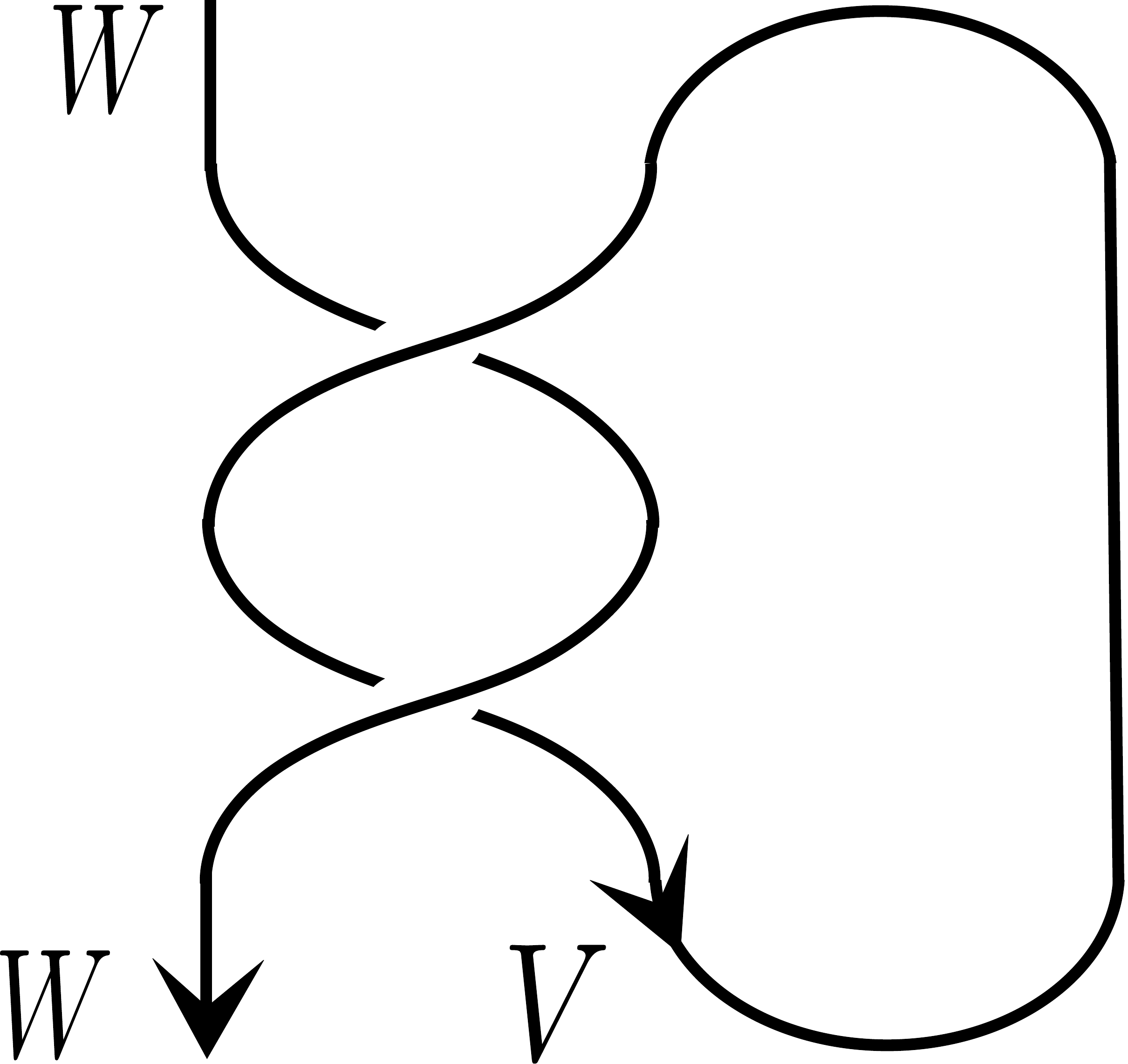}}} 
\end{align*}
\caption{}
\label{fig:RR}
\end{figure}
Algebraically, the corresponding central element is realized by contraction of $\Rmat_{21}\Rmat$ with the quantum trace $\tr_{V,q}\in \cA$ defined by
\begin{align}\label{eq:q-trace}
  \tr_{V,q}(u)=\tr_V(u K_{-2\rho}) \qquad \mbox{for all $u\in \Uc$,}
\end{align} 
see \cite{a-Baumann98} also for further references. In the case of the module category $\Mcs$, consider the ribbon tangle in Figure \ref{fig:K-graph-closed} where the closed strand is colored by a fixed $\Uc_\sigma$-module $V$ in $\Oint^{\langle\sigma \rangle}$ and $M$ runs over all $\Bcs$-modules in $\Mcs$.
\begin{figure}[H]
\begin{align*}
  \vcenter{\hbox{\includegraphics[height=2cm]{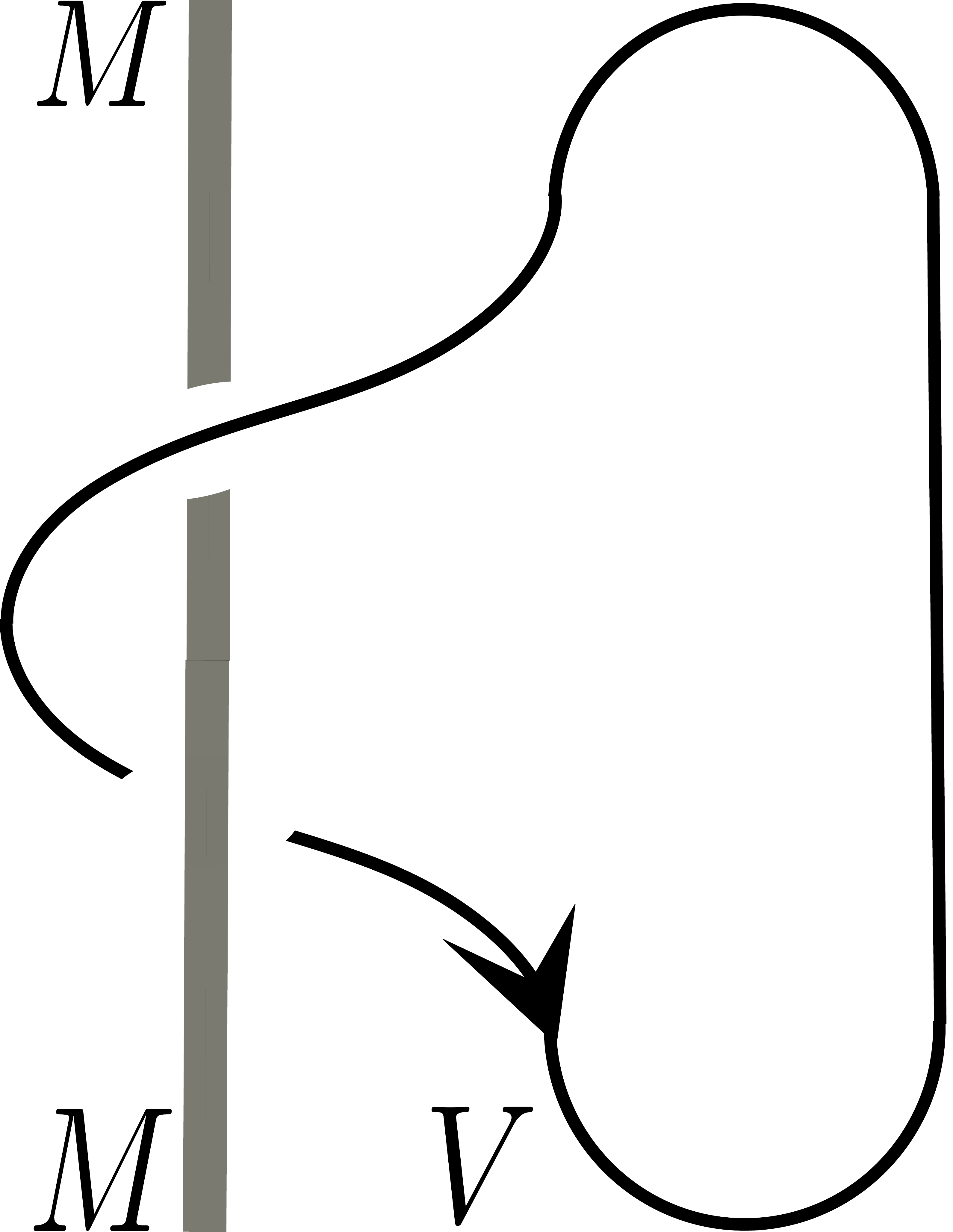}}} 
\end{align*}
\caption{}
\label{fig:K-graph-closed}
\end{figure}
The quantum trace extends to objects $V$ in $\Oint^{\langle \sigma\rangle}$ as the element $\tr_{V,\sigma,q}\in \cA_\sigma$ defined by
\begin{align*}
  \tr_{V,\sigma,q}(u)=\tr_V(u K_{-\rho}) \qquad \mbox{for all $u\in \Uc_\sigma$}.
\end{align*}  
Recall the left adjoint action of $\Uc_\sigma$ on itself. Let $\ad_{l,\sigma}^{\ast}$ denote the induced right adjoint action of $\Uc_\sigma$ on $\cA_\sigma$. More explicitly, one has $\big(\ad_{l,\sigma}^\ast(u)(a)\big)(x)=a\big(u_{(1)}x S(u_{(2)})\big)$ for all $u,x\in \Uc_\sigma$, $a\in \cA_\sigma$.
For any $V$ in $\Oint^{\langle \sigma \rangle}$ the quantum trace $\tr_{V,\sigma,q}\in \cA_\sigma$ is invariant under the right action $\ad_{l,\sigma}^\ast$. Define
\begin{align*}
  \cA_\sigma^\inv = \{a\in \cA_\sigma\,|\,\ad_{l,\sigma}^\ast(u)(a)=\vep(u)a \quad \mbox{for all $u\in \Uc_\sigma$}\}.
\end{align*}
For any $a\in \cA_\sigma^\inv$ and $x,y\in \Uc_\sigma$ we have
$a(x S^2(y))=a(y_{(1)}x S^2(y_{(3)})S(y_{(2)}))=a(yx)$.
Hence
\begin{align}\label{eq:Ainv-property}
  a_{(1)}\ot a_{(2)} = a_{(2)}\ot S^{-2}(a_{(1)}) \qquad \mbox{for all $a\in\cA^\inv_\sigma$.}
\end{align}  
If $\sigma=\id$ then we write  $\cA^\inv$ instead of $\cA^\inv_\id$. In this case the quantum traces $\{\tr_{V(\lambda),q}\,|\,\lambda\in P^+\}$ form a basis of $\cA^\inv$. A similar result holds in the case $\sigma\neq\id$. Recall the simple $\Uc_\sigma$-modules $V(\lambda)_\pm$, $W(\mu)$ defined in Section \ref{sec:equivariantization}.
\begin{lem}\label{lem:Osigma-traces}
  If $\sigma\neq \id$ then the set
  $$\{\tr_{V(\lambda)_\pm,\sigma,q}, \tr_{W(\mu),\sigma,q}\,|\,\lambda, \mu\in P^+, \sigma(\lambda)=\lambda, \sigma(\mu)\neq \mu\}$$
  forms a basis of $\cA_\sigma^\inv$.
\end{lem}  
\begin{proof}
  Recall the set of $\sigma$-orbits $P^+_\sigma$ defined in Section \ref{sec:equivariantization}. By Lemma \ref{lem:Os-ss} the algebra $\cA_\sigma$ has a direct sum decomposition
  \begin{align*}
    \cA_\sigma = \bigoplus_{\lambda\in P^+\atop \sigma(\lambda)=\lambda} (C^{V(\lambda)_+}\oplus C^{V(\lambda)_-}) \bigoplus_{\mu\in P^+_\sigma \atop \sigma(\mu)\neq \mu}C^{W(\mu)}
  \end{align*}
  where $C^V$ denotes the $\field(q^{1/d})$-linear span of the matrix coefficients of the $\Uc_\sigma$-module $V$. In each of the Peter-Weyl blocks the space of $\ad_{l,\sigma}^\ast(\Uc_\sigma)$-invariants is one-dimensional and spanned by the corresponding quantum trace. 
\end{proof}  
\subsection{Construction of central elements in $(\ad_l U)(K_{-2\lambda})$}\label{sec:construction}
 Recall from property (0) in Theorem \ref{thm:scrK} that the element $\scrK$ defined by \eqref{eq:KBU-def} gives rise to a well-defined linear map
\begin{align*}
  \ltil_\scrK:\cA_\sigma \rightarrow \Bcs, \qquad \ltil_\scrK(a)=\langle\scrK,a\rangle_{2,1}.
\end{align*}
By the following proposition and by \eqref{eq:centerB} the image of $\cA^\inv_{\sigma}$ under the map $\ltil_\scrK$ is contained in the center of $\Bcs$.
\begin{prop}\label{prop:inZ}
  Let $\psi\in \cA^\inv_{\sigma}$. Then 
  \begin{align*}
    \ad_l(b)(\ltil_\scrK(\psi))=\vep(b)\ltil_{\scrK}(\psi) \qquad \mbox{for all $b\in \Bcs$}.
  \end{align*}  
  In other words, $\ltil_\scrK(\psi)\in Z(\Bcs)$.
\end{prop}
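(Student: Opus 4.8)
The plan is to reduce the statement to the $\tau\tau_0$-twisted coadjoint invariance of $\psi$. First note that $\ltil_\scrK(\psi)$ really lies in $\Bcs$: by property~(0) of Theorem~\ref{thm:scrK} we have $\scrK\in\sB^{(2)}$, so by the definition \eqref{eq:BU-complete} of $\sB^{(2)}$ the pairing $\ltil_\scrK(a)=\langle\scrK,a\rangle_{2,1}$ lands in $\Bcs$ for every $a\in\cA$; in particular $\ltil_\scrK(\psi)\in\Bcs$. By the characterization \eqref{eq:centerB} of $Z(\Bcs)$ it then suffices to show
\begin{align*}
  \ad_l(b)\big(\ltil_\scrK(\psi)\big)=\vep(b)\,\ltil_\scrK(\psi)\qquad\text{for all }b\in\Bcs.
\end{align*}

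The key step is the intertwining identity
\begin{align*}
  \ad_l(b)\big(\ltil_\scrK(a)\big)=\ltil_\scrK\big(\ad_{l,\tau\tau_0}^\ast(b)(a)\big)\qquad\text{for all }b\in\Bcs,\ a\in\cA.
\end{align*}
Once this is established the proposition is immediate: since $\psi\in\cA^\inv_{\tau\tau_0}$ we have $\ad_{l,\tau\tau_0}^\ast(b)(\psi)=\vep(b)\psi$, so applying the identity with $a=\psi$ yields $\ad_l(b)(\ltil_\scrK(\psi))=\ltil_\scrK(\vep(b)\psi)=\vep(b)\ltil_\scrK(\psi)$, and hence $\ltil_\scrK(\psi)\in Z(\Bcs)$ by the first paragraph.

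To prove the intertwining identity I would translate both sides into the contraction pairing $\langle\,,\,\rangle_{2,1}$. Multiplying the first tensor leg of an element $X\in\sU^{(2)}_0$ on the left by $u$ and on the right by $u'$ corresponds, via \eqref{eq:hopf-contraction} and the counit axiom, precisely to $\langle X,a\rangle_{2,1}\mapsto u\,\langle X,a\rangle_{2,1}\,u'$; consequently $\ad_l(b)(\ltil_\scrK(a))=\langle\scrK',a\rangle_{2,1}$, where $\scrK'$ is obtained from $\scrK$ by applying $u\mapsto\sum b_{(1)}\,u\,S(b_{(2)})$ to its first tensor leg (the $\Bcs$-leg). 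Dually, multiplying the second leg of $X$ by $u$ on the left and $u'$ on the right corresponds to replacing $a$ by $u'\lact a\ract u$; unwinding the definition of $\ad_{l,\tau\tau_0}^\ast$ one gets $\ltil_\scrK(\ad_{l,\tau\tau_0}^\ast(b)(a))=\langle\scrK'',a\rangle_{2,1}$, where $\scrK''$ is obtained from $\scrK$ by applying $u\mapsto\sum b_{(1)}\,u\,S(\tau\tau_0(b_{(2)}))$ to its second tensor leg (the $\uqg$-leg). It remains to prove that $\scrK'=\scrK''$ in $\sU^{(2)}_0$. This is where the properties of $\scrK$ from Theorem~\ref{thm:scrK} enter: primarily property~(1), the $\tau\tau_0$-twisted commutation relation $\scrK\,\kow(b)=\big((\id\ot\tau\tau_0)\kow(b)\big)\,\scrK$, which moves a $\Bcs$-element from one leg to the other up to the twist, possibly together with the coproduct relations~(2)–(3), the antipode axioms, and the right coideal property $\kow(\Bcs)\subseteq\Bcs\ot\uqg$ (needed so that the relevant Sweedler components lie in $\Bcs$ and property~(1) applies). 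As an alternative, one may instead verify $\scrK'=\scrK''$ on the algebra generators $\cM_X$, $\UnullTh$ and $B_i$ of $\Bcs$, using the explicit coproducts recalled in the proof of Proposition~\ref{prop:Rthetamu}.

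The main obstacle is precisely this last identity $\scrK'=\scrK''$: reconciling the genuinely two-sided adjoint action $u\mapsto\sum b_{(1)}uS(b_{(2)})$ on one leg with the one-sided, $\tau\tau_0$-twisted commutation relation for $\scrK$ requires careful Sweedler-calculus bookkeeping — one has to introduce and cancel matched pairs $\sum b_{(1)}S(b_{(2)})=\vep(b)$ in exactly the right places so that the twisted commutation relation becomes applicable. This identity is, of course, the algebraic shadow of the graphical fact that closing the $\scrK$-strand of Figure~\ref{fig:K-graph} into a ribbon winding around the pole produces an endomorphism of the pole object, i.e. a central element of $\Bcs$.
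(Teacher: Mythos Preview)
Your plan is the right one and uses exactly the same ingredients as the paper's proof: property~(0) to land in $\Bcs$, property~(1) to commute $\scrK$ past coproduct pieces, and the antipode trick of inserting and cancelling $b_{(i)}S(b_{(i+1)})=\vep(b_{(i)})$. The paper carries this out as a short direct computation rather than isolating a module-map statement first.

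There is one point to flag. The clean intertwining identity you aim for,
\[
  \ad_l(b)\bigl(\ltil_\scrK(a)\bigr)=\ltil_\scrK\bigl(\ad_{l,\tau\tau_0}^\ast(b)(a)\bigr)\qquad\text{for all }a\in\cA,
\]
is not what the paper proves, and as formulated it is not obviously correct: compare with Proposition~\ref{prop:Caldero}, where $\ltil=\ltil_{\Rmat\Rmat_{21}}$ intertwines $\ad_l$ with the \emph{dual right} adjoint action $\ad_r^\ast$, not with $\ad_l^\ast$. Your $\scrK''$ corresponds to $\ad_{l,\tau\tau_0}^\ast$, and the identity $\scrK'=\scrK''$ does not follow from property~(1) alone, precisely because $S(b_{(2)})$ need not lie in $\Bcs$, so one cannot push it through $\scrK$ via property~(1).

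What the paper actually does avoids this. One inserts $\vep(b_{(1)})=S^{-1}\!\bigl(\tau\tau_0(b_{(2)})\bigr)\,\tau\tau_0(b_{(1)})$ in the second leg on the left, so that the factor $(b_{(0)}\ot\tau\tau_0(b_{(1)}))$ sits immediately to the left of $\scrK$; property~(1) then moves it to the right as $\scrK\,(b_{(0)}\ot b_{(1)})$. At this point the second leg is conjugated by $S^{-1}\!\bigl(\tau\tau_0(b_{(\cdot)})\bigr)$ on the left and $b_{(\cdot)}$ on the right, which one recognises as $\ad_{l,\tau\tau_0}$ applied with argument $S^{-1}\!\bigl(\tau\tau_0(b_{(1)})\bigr)$ rather than $b$ itself. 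Only now does one use $\psi\in\cA^\inv_{\tau\tau_0}$ to kill this conjugation; the remaining factor $b_{(0)}S(b_{(2)})$ in the first leg then collapses to $\vep(b)$. So the invariance of $\psi$ is used in the middle of the computation, not only at the very end, and the full intertwining identity for arbitrary $a$ is never needed.
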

\begin{proof}
  Let $\psi\in \cA^\inv_{\sigma}$ and $b\in \Bcs$. Using Sweedler notation we calculate
  \begin{align*}
    \ad_l(b)(\ltil_\scrK(\psi))&=\langle(b_{(0)}\ot 1)\cdot\Kmat\cdot(S(b_{(1)})\ot 1), \psi\rangle_{2,1}\\
    &=\langle(b_{(0)}\ot S^{-1}(b_{(2)})b_{(1)})\cdot \Kmat\cdot(S(b_{(3)})\ot 1), \psi\rangle_{2,1}
    \end{align*} 
  Using Theorem \ref{thm:scrK}.(1) one obtains
  \begin{align*}
      \ad_l(b)(\ltil_\scrK(\psi))&=\langle(1\ot S^{-1}(b_{(2)}))\cdot \Kmat\cdot(b_{(0)}S(b_{(3)})\ot b_{(1)}), \psi\rangle_{2,1}\\
      &= \langle (1\ot \ad_{l}(S^{-1}(b_{(1)})) \big(\scrK \cdot (b_{(0)}S(b_{(2)})\ot 1)\big) , \psi \rangle_{2,1}.
  \end{align*}   
  Now we use the assumption $\psi\in \cA^\inv_{\sigma}$ to obtain
  \begin{align*}
    \ad_l(b)(\ltil_\scrK(\psi))&=\vep(b_{(1)})\langle \scrK \cdot (b_{(0)}S(b_{(2)})\ot 1), \psi\rangle_{2,1} 
      =\vep(b) \ltil_\scrK(\psi) 
  \end{align*}
  which completes the proof of the proposition.
\end{proof}
Recall from Remark \ref{rem:Vlp} that in the case $\sigma=\id$ we also write $V(\lambda)_+=V(\lambda)$ for any $\lambda\in P^+$. Let $\lambda\in P^+$ with $\sigma(\lambda)=\lambda$. The next proposition shows that the map $\ltil_\Kmat$ maps the quantum trace for a simple $\Uc_\sigma$-module $V(\lambda)_+$ into the corresponding component of the locally finite part $F_l(\Uc)$. For simplicity we set $a(u)=\langle u, a \rangle_{1,1}$ for all $u\in \sU_\sigma$, $a\in \cA_\sigma$. 
\begin{prop}\label{prop:Z-construct}
  Let $\lambda\in P^+$ with $\sigma(\lambda)=\lambda$. Then the quantum trace $d=\tr_{V(\lambda)_+,\sigma,q}$ satisfies the relation
  \begin{align}\label{eq:inadUl}
    \ltil_\scrK(d) = d_{(1)}(\cK) \ltil_{\Rmat_{21}^{-1} \Rmat^{-1}}(S(d_{(2)})) \in \ad(\Uc)(K_{-2\lambda})\setminus \{0\}.
  \end{align}
\end{prop}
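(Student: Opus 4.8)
Write $\sigma=\tau\tau_0$; since $\tau^2=\id$, the hypothesis $\tau(\lambda)=\tau_0(\lambda)$ forces $\sigma(\lambda)=\lambda$, so $d=\tr_{V(\lambda),q}^{\tau\tau_0}$ and the isomorphism $\sigma_\lambda\colon V(\lambda)\to V(\lambda)^{\sigma}$ are defined. The plan is to prove the displayed equality in \eqref{eq:inadUl} by a direct computation with generalized $l$-operators, and then to read off $\ltil_\scrK(d)\in\ad(\Uc)(K_{-2\lambda})\setminus\{0\}$ from it. Starting from $\scrK=\Rmat_{21}^{\tau\tau_0}(1\ot\cK)\Rmat$, the multiplicativity \eqref{eq:ltilXY} of $\ltil$ together with $\ltil_{1\ot\cK}(a)=a(\cK)\,1$ and $\ltil_{\Rmat_{21}^{\tau\tau_0}}(a)=\sigma\big(\ltil_{\Rmat_{21}}(a)\big)$ (the latter with $\sigma$ the automorphism of $\sU$ induced by the diagram automorphism) gives
\begin{align*}
  \ltil_\scrK(d)=\sum d_{(2)}(\cK)\,\ltil_{\Rmat_{21}^{\tau\tau_0}}(d_{(1)})\,\ltil_{\Rmat}(d_{(3)}).
\end{align*}

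Next I would use the twisted cyclicity of the twisted quantum trace: from its definition, from $S^2(u)=K_{-2\rho}uK_{2\rho}$, and from $\sigma(\rho)=\rho$ one derives $\tr_{V(\lambda),q}^{\tau\tau_0}(xy)=\tr_{V(\lambda),q}^{\tau\tau_0}\big(y\,S^2(\sigma(x))\big)$ for all $x,y\in\Uc$. On the tensor coalgebra $\cA$ this reads $\sum d_{(1)}\ot d_{(2)}\ot d_{(3)}=\sum\gamma(d_{(3)})\ot d_{(1)}\ot d_{(2)}$, where $\gamma\colon\cA\to\cA$ is given by $\gamma(a)(u)=a\big(S^2(\sigma(u))\big)$. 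Substituting this cyclically permutes the three legs and yields $\ltil_\scrK(d)=\sum d_{(1)}(\cK)\,\ltil_{\Rmat_{21}^{\tau\tau_0}}(\gamma(d_{(3)}))\,\ltil_{\Rmat}(d_{(2)})$.

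To complete the equality I would use that, $\tau\tau_0$ being a diagram automorphism, $\Rmat$ is invariant under $\tau\tau_0\ot\tau\tau_0$ (by uniqueness of the quasi $R$-matrix, since $\tau\tau_0$ commutes with the bar involution). This yields the naturality relation $\ltil_{\Rmat_{21}}(\sigma(a))=\sigma^{-1}\big(\ltil_{\Rmat_{21}}(a)\big)$, where $\sigma(a)$ denotes the pullback action on $\cA$ as in Lemma \ref{lem:sigma-tr}. Combining it with $\ltil_{\Rmat_{21}^{\tau\tau_0}}(a)=\sigma\big(\ltil_{\Rmat_{21}}(a)\big)$ and the antipode identities \eqref{eq:ltil-rels} gives $\ltil_{\Rmat_{21}^{\tau\tau_0}}(\gamma(a))=\ltil_{\Rmat_{21}^{-1}}(S(a))$ and $\ltil_{\Rmat}(b)=\ltil_{\Rmat^{-1}}(S(b))$. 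Inserting these and using $\kow(S(d_{(2)}))=S(d_{(3)})\ot S(d_{(2)})$ together with \eqref{eq:ltilXY} once more recombines the result as $\ltil_\scrK(d)=\sum d_{(1)}(\cK)\,\ltil_{\Rmat_{21}^{-1}\Rmat^{-1}}(S(d_{(2)}))$, the claimed identity. The main obstacle is precisely this step: one must keep careful track of how $\tau\tau_0$ acts on $\cA$, on $\sU$, and on the ($\tau\tau_0$-invariant) universal $R$-matrix, and of the interplay with \eqref{eq:ltil-rels}; the remaining steps are routine Sweedler-notation bookkeeping.

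Finally, picking dual bases $(v_i)$ of $V(\lambda)$ and $(f^i)$ of $V(\lambda)^\ast$ one has $d=\sum_i c_{f^i,\,K_{-2\rho}\sigma_\lambda v_i}$, and a short coproduct computation gives $\sum d_{(1)}(\cK)\,d_{(2)}=\sum_i c_{f^i,\,K_{-2\rho}\sigma_\lambda\cK v_i}\in C^\lambda$; this is nonzero because $\cK$ acts invertibly on $V(\lambda)$, so $(K_{-2\rho}\sigma_\lambda\cK v_i)_i$ is again a basis. As the scalars $d_{(1)}(\cK)$ factor out, the identity just proved reads $\ltil_\scrK(d)=\ltil_{\Rmat_{21}^{-1}\Rmat^{-1}}\big(S(\textstyle\sum d_{(1)}(\cK)d_{(2)})\big)$. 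Since $S$ maps $C^\lambda$ bijectively onto $C^{-w_0\lambda}$ and, by Proposition \ref{prop:Caldero}, $\ltil_{\Rmat_{21}^{-1}\Rmat^{-1}}$ maps $C^{-w_0\lambda}$ bijectively onto $\ad(\Uc)(K_{2w_0(-w_0\lambda)})=\ad(\Uc)(K_{-2\lambda})$, we obtain $\ltil_\scrK(d)\in\ad(\Uc)(K_{-2\lambda})$, and it is nonzero because $\sum d_{(1)}(\cK)d_{(2)}\neq 0$ while $S$ and $\ltil_{\Rmat_{21}^{-1}\Rmat^{-1}}$ are injective.
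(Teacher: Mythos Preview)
Your proof is correct and follows essentially the same route as the paper. Both arguments expand $\ltil_{\scrK}(d)$ via \eqref{eq:ltilXY}, invoke the (twisted) cyclicity of the quantum trace to rotate the Sweedler legs, use the $\tau\tau_0$-invariance of $\Rmat$ together with \eqref{eq:ltil-rels} to convert $\ltil_{\Rmat_{21}^{\tau\tau_0}}$ and $\ltil_{\Rmat}$ into $\ltil_{\Rmat_{21}^{-1}}$ and $\ltil_{\Rmat^{-1}}$ applied to antipoded arguments, and then recombine via \eqref{eq:ltilXY}; the membership and non-vanishing are read off from Proposition~\ref{prop:Caldero} exactly as you do. The only cosmetic difference is that the paper uses $\tau\tau_0(d)=d$ at the outset (writing $d=\tau\tau_0\lact\tr_{V(\lambda),q}$ and the ordinary cyclicity $b_{(1)}\ot b_{(2)}=b_{(2)}\ot S^{-2}(b_{(1)})$) whereas you encode the same information through your map $\gamma$ and the twisted cyclicity $\tr_{V,q}^{\sigma}(xy)=\tr_{V,q}^{\sigma}(yS^2(\sigma(x)))$.
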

\begin{proof}
  Let $\lambda\in P^+$ with $\sigma(\lambda)=\lambda$ and set $d=\tr_{V(\lambda)_+,\sigma,q}$. We want to determine 
  \begin{align*}
    \ltil_\scrK(d) = \ltil_{\Rmat_{21}(1\ot \cK)\Rmat}(d).
  \end{align*}
 Relation \eqref{eq:ltilXY} implies that
 \begin{align}\label{eq:step2}
    \ltil_\scrK(d) = \ltil_{\Rmat_{21}}(d_{(1)})\,\, d_{(2)}(\cK)\,\, \ltil_{\Rmat}(d_{(3)}).
 \end{align} 
 By \eqref{eq:Ainv-property} the quantum trace $d$ satisfies the relation 
 \begin{align}\label{eq:q-trace-prop}
   d_{(1)} \ot d_{(2)} \ot d_{(3)} = d_{(2)}\ot d_{(3)} \ot S^{-2}(d_{(1)}).
 \end{align}
Inserting the above formula into \eqref{eq:step2} we obtain
\begin{align*}
   \ltil_\scrK(d) \stackrel{\phantom{\eqref{eq:ltil-rels}}}{=}& \ltil_{\Rmat_{21}}(S^2(d_{(3)}))\,\, d_{(1)}(\cK)\,\, \ltil_{\Rmat}(d_{(2)})\\
   \stackrel{\eqref{eq:ltil-rels}}{=}&d_{(1)}(\cK)\,\,
   \ltil_{\Rmat_{21}^{-1}}(S(d_{(3)}))\,\,  \ltil_{\Rmat^{-1}}(S(d_{(2)}))\\
   \stackrel{\eqref{eq:ltilXY}}{=}&d_{(1)}(\cK)\,\,
   \ltil_{\Rmat_{21}^{-1}\Rmat^{-1}}(S(d_{(2)})).
\end{align*} 
 This proves the equality in \eqref{eq:inadUl}. By Proposition \ref{prop:Caldero}, the fact that $d_{(1)}\ot S(d_{(2)})\in C^\lambda\ot C^{-w_0\lambda}$ implies that  $\ltil_\scrK(d)\in \ad(\Uc)(K_{-2\lambda})$.
Moreover, $\ltil_\scrK(d)\neq 0$ because $d_{(1)}(\cK)d_{(2)}\neq 0$ and the map $\ltil'$ in Proposition \ref{prop:Caldero} is an isomorphism. This completes the proof of relation \eqref{eq:inadUl}.
\end{proof}
By Proposition \ref{prop:inZ} the generalized $l$-operator $\ltil_\Kmat$ provides a linear map
\begin{align*}
  \ltil_\Kmat:\cA^\inv_{\sigma}\rightarrow Z(\Bcs)
\end{align*}
This map fails to be an algebra homomorphism but only by a scalar factor on each Peter-Weyl block. Recall the definition of the ribbon element $\rib$ for $\sU$ in \eqref{eq:rib}.
\begin{thm}\label{thm:Z(B)-basis}
  The map
    \begin{align}\label{eq:iso-Z}
      \ltil_{(1\ot v)\cdot\Kmat}:\cA^\inv_{\sigma}\rightarrow Z(\Bcs)
    \end{align}
    is a surjective algebra homomorphism. Moreover, the set
    \begin{align}\label{eq:basis}
      \{ \ltil_{(1\ot v)\cdot\scrK}( \tr_{V(\lambda)_+,\sigma,q})\,|\,\lambda\in P^+, 
      \sigma(\lambda)=\lambda\}
     \end{align}  
  is a basis of the center $Z(\Bcs)$.
\end{thm}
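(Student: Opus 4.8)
The plan is to assemble the theorem from the structural results already in place. First, by Proposition \ref{prop:inZ} together with the characterization \eqref{eq:centerB} of the center, the map $\ltil_\scrK$ sends $\cA^\inv_{\tau\tau_0}$ into $Z(\Bcs)$; since the ribbon element $\rib\in\sU$ is central, multiplication by $1\ot\rib$ preserves all the relevant properties and $\ltil_{(1\ot v)\cdot\scrK}$ also lands in $Z(\Bcs)$. I would next establish that $\ltil_{(1\ot v)\cdot\scrK}$ is an algebra homomorphism. By \eqref{eq:ltilXY}, $\ltil_{(1\ot v)\cdot\scrK}$ multiplicatively splits the coproduct of $\cA$, and on each Peter-Weyl block $C^\lambda$ with $\tau\tau_0(\lambda)=\lambda$, the twisted quantum trace $\tr^{\tau\tau_0}_{V(\lambda),q}$ is (up to scalar) the unique $\ad^\ast_{l,\tau\tau_0}$-invariant element (Lemma \ref{lem:tr-basis}). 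The insertion of the ribbon element $v$ is exactly what rescales $\ltil_\scrK$ so that the resulting map becomes a genuine algebra map: this is a standard computation, analogous to the untwisted case treated in \cite{a-Baumann98}, using that $v_{V(\lambda)}=q^{(\lambda,\lambda+2\rho)}\id$ together with \eqref{eq:rib-conditions}; the twist by $\tau\tau_0$ is harmless because $\tau\tau_0(\rho)=\rho$ and $\tau\tau_0$ fixes the weights in question.

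The heart of the argument is bijectivity, and here I would proceed blockwise using the Peter-Weyl/locally finite decompositions. By Lemma \ref{lem:tr-basis} the domain $\cA^\inv_{\tau\tau_0}$ has basis $\{\tr^{\tau\tau_0}_{V(\lambda),q}\mid \lambda\in P^+,\ \tau(\lambda)=\tau_0(\lambda)\}$. By Proposition \ref{prop:Z-construct}, $\ltil_\scrK(\tr^{\tau\tau_0}_{V(\lambda),q})$ is a \emph{nonzero} element of $\ad(\Uc)(K_{-2\lambda})$, hence the same holds after multiplying by $1\ot v$ (which acts on $C^\lambda$ by the nonzero scalar $q^{(\lambda,\lambda+2\rho)}$ on the second tensor leg after pairing). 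Thus each basis vector of the domain is sent to a nonzero element of the summand $Z(\Bcs)\cap\ad(\Uc)(K_{-2\lambda})$ of the decomposition \eqref{eq:center-decomposition}. By Proposition \ref{prop:dim-estimate} this summand is zero unless $\tau(\lambda)=\tau_0(\lambda)$, in which case it is at most one-dimensional; combined with the nonvanishing just noted it is exactly one-dimensional precisely for those $\lambda$. Therefore $\ltil_{(1\ot v)\cdot\scrK}$ sends a basis of $\cA^\inv_{\tau\tau_0}$ to a set of nonzero vectors, one in each nonzero summand of \eqref{eq:center-decomposition}, and hence is an isomorphism of vector spaces; being also an algebra map, it is an algebra isomorphism. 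The basis statement \eqref{eq:basis} is then immediate.

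The step I expect to be the main obstacle is verifying that $\ltil_{(1\ot v)\cdot\scrK}$ is multiplicative. The tools are all present --- formula \eqref{eq:ltilXY}, the coproduct formulas (2) and (3) for $\scrK$ in Theorem \ref{thm:scrK}, the centrality and grouplike-twist behavior \eqref{eq:rib-conditions} of $\rib$, and the invariance of the twisted quantum trace (Lemma \ref{lem:qtrs-invariance}, Lemma \ref{lem:sigma-tr}) --- but keeping track of the $\tau\tau_0$-twists through the reflection-equation-type relation (3) of Theorem \ref{thm:scrK} requires care. Concretely, I would compute $\ltil_{(1\ot v)\scrK}(a)\,\ltil_{(1\ot v)\scrK}(b)$ for $a,b\in\cA^\inv_{\tau\tau_0}$, expand $(\id\ot\kow)\big((1\ot v)\scrK\big)$ using Theorem \ref{thm:scrK}.(3) and \eqref{eq:rib-conditions}, and use $\ad^\ast_{l,\tau\tau_0}$-invariance of $a$ and $b$ to collapse the $\Rmat$-factors, exactly as in the proof of Proposition \ref{prop:inZ}; the scalar produced by the ribbon element is what makes the collapse exact rather than off by a weight-dependent factor. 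Everything else is bookkeeping with the already-established blockwise dimension count.
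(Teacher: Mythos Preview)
Your proposal is correct and follows essentially the same route as the paper: establish that the map lands in $Z(\Bcs)$ via Proposition \ref{prop:inZ}, get nonvanishing in the correct Peter--Weyl block via Proposition \ref{prop:Z-construct}, invoke the dimension bound of Proposition \ref{prop:dim-estimate} together with Lemma \ref{lem:tr-basis} for bijectivity, and verify multiplicativity by expanding $(\id\ot\kow)\big((1\ot\rib)\Kmat\big)$ with Theorem \ref{thm:scrK}.(3) and \eqref{eq:rib-conditions}. One small sharpening: in the multiplicativity step the paper does not collapse the $\Rmat$-factors via $\ad^\ast_{l,\tau\tau_0}$-invariance in the style of Proposition \ref{prop:inZ}, but rather via the Sweedler rotation identity \eqref{eq:twqtr-inv} (which extends linearly from the twisted quantum traces to all of $\cA^\inv_{\tau\tau_0}$) to turn $\Rmat_{23}^{-1}\cdots\Rmat_{23}^{\tau\tau_0}$ into a telescoping pair $\Rmat_{23}\cdots\Rmat_{23}^{-1}$ that cancels directly---so when you carry out the computation, that is the identity you will actually need.
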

\begin{proof}
  Let $\lambda\in P^+$ with $\sigma(\lambda)=\lambda$. By Propositions \ref{prop:inZ} and \ref{prop:Z-construct} we have
  \begin{align*}
     \ltil_{(1\ot v)\cdot\Kmat}(\tr_{V(\lambda)_+,\sigma,q})=
     q^{(\lambda,\lambda+2\rho)}\ltil_\scrK(\tr_{V(\lambda)_+,\sigma,q})
     \in Z(\Bcs)\cap \ad(\Uc)(K_{-2\lambda})\setminus\{0\}.
  \end{align*}  
  Hence, by Proposition \ref{prop:dim-estimate} the elements in the set \eqref{eq:basis} span the center $Z(\Bcs)$. These elements are also linearly independent because they lie in distinct components of the locally finite part with respect to the direct sum decomposition \eqref{eq:Fl-PW}. This proves the second part of the theorem.

It remains to show that the map \eqref{eq:iso-Z} is a homomorphism of algebras.
Let $a,b\in \cA^{\inv}_{\sigma}$. Using Property (3) in Theorem \ref{thm:scrK} and the first equation in \eqref{eq:rib-conditions} one calculates
\begin{align*}
  \ltil_{(1\ot v)\cdot\Kmat}(ab)&= \langle (\id \ot \kow)\big((1\ot v) \Kmat\big),a\ot b\rangle_{3,2}\\
  &=\langle (1\ot v \ot v)\Rmat_{23}^{-1}\Kmat_{13}\Rmat_{23} \Kmat_{12},a\ot b\rangle_{3,2}
\end{align*}  
Using relation \eqref{eq:q-trace-prop} for the element $b\in \cA^{\inv}_{\sigma}$ one obtains
\begin{align*}
   \ltil_{(1\ot v)\cdot\Kmat}(ab) =&\langle\Rmat_{23}^{-1},a_{(1)} \ot b_{(2)}\rangle_{3,2} \langle \Rmat_{23},a_{(2)}\ot S^{-2}(b_{(1)}) \rangle_{3,2} \\
       &\quad\langle (1\ot \rib\ot \rib)\Kmat_{13}\Kmat_{12}, a_{(3)}\ot b_{(3)}\rangle_{3,2}\\
      =&\langle\Rmat_{23},S(a_{(1)}) \ot b_{(2)}\rangle_{3,2} \langle \Rmat^{-1}_{23},S(a_{(2)})\ot b_{(1)} \rangle_{3,2}\\ 
       &\quad\langle (1\ot \rib\ot \rib)\Kmat_{13}\Kmat_{12}, a_{(3)}\ot b_{(3)}\rangle_{3,2}\\
      =&   \langle (1\ot \rib\ot \rib)\Kmat_{13}\Kmat_{12}, a\ot b\rangle_{3,2}\\
      =&\ltil_{(1\ot v)\cdot\Kmat}(a)\,\ltil_{(1\ot v)\cdot\Kmat}(b).
\end{align*}
This shows that the map \eqref{eq:iso-Z} is an algebra homomorphism. 
\end{proof}
By Theorem \ref{thm:Z(B)-basis} the center $Z(\Bcs)$ has a distinguished basis \eqref{eq:basis}. Recall the notation
\begin{align*}
  P_{Z({\Bcs})}=\{\lambda\in P^+\,|\,\sigma(\lambda)=\lambda\}
\end{align*}
from Remark \ref{rem:PZB} and Lemma \ref{lem:PZB}. For simplicity we set $b_\lambda=\ltil_{(1\ot v)\cdot \Kmat}(\tr_{V(\lambda)_+,\sigma,q})$ for all $\lambda\in P_{Z({\Bcs})}$. Let $Z(\Bcs)_\Z$ denote the $\Z$-linear span of the basis $\{b_\lambda\,|\,\lambda\in P_{Z(\Bcs)}\}$. The ring $\cA^\inv_\sigma$ also has a distinguished basis consisting of the quantum traces of simple $\Uc_\sigma$-modules, see Lemma \ref{lem:Osigma-traces}. Let $\cA^\inv_{\sigma,\Z}$ denote the $\Z$-linear span of this basis. Let $K_0(\Oint^{\langle \sigma\rangle})$ denote the Grothendieck ring of the tensor category $\Oint^{\langle \sigma \rangle}$. The multiplicative behaviour of the quantum traces in $\cA_\sigma$ implies that the $\Z$-linear map
 \begin{align*}
   \Psi:K_0(\Oint^{\langle \sigma\rangle}) \rightarrow \cA^\inv_{\sigma,\Z}, \qquad [V]\mapsto \tr_{V,\sigma,q}
 \end{align*}  
 defines an isomorphism of $\Z$-algebras. Combining this with Theorem \ref{thm:Z(B)-basis} we obtain the following result.
 \begin{cor}\label{cor:ZBZ}
   There is a surjective $\Z$-algebra homomorphism
   \begin{align*}
     \Phi: K_0(\Oint^{\langle \sigma\rangle}) \rightarrow Z(\Bcs)_\Z
   \end{align*}
   with the following properties:
   \begin{enumerate}
   \item If $\sigma=\id$ then $\Phi([V(\lambda)])=b_\lambda$ for all $\lambda\in P^+$. In this case $\Phi$ is an isomorphism of $\Z$-algebras.
   \item If $\sigma \neq \id$ then
     \begin{align}\label{eq:VVW}
       \Phi([V(\lambda)_+])=b_\lambda=-\Phi([V(\lambda)_-]),\qquad
       \Phi([W(\mu)])=0
     \end{align}
     for all $\lambda,\mu\in P^+$ with $\sigma(\lambda)= \lambda$ and $\sigma(\mu)\neq \mu$. In this case
     \begin{align*}
       \ker(\Phi)= \mathrm{span}_\Z\{[V(\lambda)_+]-[V(\lambda)_-], [W(\mu)]\,|\,\lambda,\mu\in P^+, \sigma(\lambda)=\lambda, \sigma(\mu)\neq \mu\}.
     \end{align*}  
   \end{enumerate}  
 \end{cor}  
\begin{proof}
  As explained above the map $\Phi=\ltil_{(1\ot v)\Kmat}\circ \Psi:K_0(\Oint^{\langle \sigma\rangle}) \rightarrow Z(\Bcs)_\Z$ is a well defined surjective $\Z$-algebra homomorphism. If $\sigma=\id$ then $\Phi$ is an isomorphism by the second part of Theorem \ref{thm:Z(B)-basis}. If $\sigma\neq \id$ and $\lambda,\mu\in P^+$ with $\sigma(\lambda)=\lambda$ and $\sigma(\mu)\neq\mu$ then the definition of $\Kmat$ in \eqref{eq:KBU-def} leads to
\begin{align*}
  \ltil_{(1\ot v)\cdot\Kmat}(\tr_{V(\lambda)_-,\sigma,q})&= -\ltil_{(1\ot v)\cdot\Kmat}(\tr_{V(\lambda)_+,\sigma,q})\\
  \ltil_{(1\ot v)\cdot\Kmat}(\tr_{W(\mu),\sigma,q})&=0.
\end{align*}
This implies formulas \eqref{eq:VVW}. The description of $\ker(\Phi)$ again follows from the second part of Theorem \ref{thm:Z(B)-basis}.
\end{proof}
\begin{rema}
  By Theorem \ref{thm:Z(B)-basis} the center $Z(\Bcs)$ has a distinguished basis $\{b_\lambda\,|\,\lambda\in P_{Z(\Bcs)}\}$. Corollary \ref{cor:ZBZ} implies that there exist coefficients $\eta_{\lambda,\mu}^\nu\in \N_0$ for all $\lambda,\mu, \nu \in P_{Z(\Bcs)}$ such that
  \begin{align*}
    b_\lambda b_\mu= \sum_{\nu\in P_{Z(\Bcs)}} \eta_{\lambda,\mu}^\nu b_\nu.
  \end{align*} 
If $\sigma=\id$ then the $\eta_{\lambda,\mu}^\nu$ are the Littlewood-Richardson coefficients for $\gfrak$. By the second part of Corollary \ref{cor:ZBZ} the coefficients  $\eta_{\lambda,\mu}^\nu$ can also be determined in the case $\sigma\neq \id$. As an example consider $\Uc$ for $\gfrak=\slfrak_3(\C)$ and the quantum symmetric pair coideal subalgebra $\Bcs$ for $X=\emptyset$ and $\tau=\id$. Let $\varpi_1$, $\varpi_2$ denote the fundamental weights for $\gfrak$. In this case the center $Z(\Bcs)$ has a basis $\{b_{k(\varpi_1+\varpi_2)}\,|\,k\in \N_0\}$. Decomposing tensor products of $\Uc_\sigma$-modules we obtain
\begin{align*}
  V(\varpi_1+\varpi_2)_+\ot V(\varpi_1+\varpi_2)_+\cong \, & V(2\varpi_1+2\varpi_2)_+\oplus W(3\varpi_1)\\ &\oplus V(\varpi_1+\varpi_2)_+\oplus V(\varpi_1+\varpi_2)_- \oplus V(0)_+.
\end{align*}
Hence we get $b_{\varpi_1 + \varpi_2} b_{\varpi_1 + \varpi_2} = b_{2(\varpi_1 + \varpi_2)} +1$ in this case.
\end{rema}
\begin{rema}
  The results of Section \ref{sec:construction} build on the existence of the universal K-matrix $\Kmat$ which is only established for parameters $\bc=(c_i)_{i\in I\setminus X}\in \cC$, $\bs=(s_i)_{i\in I\setminus X}\in \cS$ satisfying the additional conditions \eqref{eq:c-bar-condition}, \eqref{eq:s-X-condition}, and \eqref{eq:c-tau-condition}. In \cite[Corollary 8.4]{a-KL08} the fact that
  \begin{align}\label{eq:Z-estimate}
    \dim\big(Z(\Bcs)\cap \ad(\Uq)(K_{-2\lambda})\big)\ge 1 \qquad \mbox{if $\lambda\in P_{Z(\Bcs)}$}
  \end{align}
  was established for all $\bc\in \cC$, $\bs\in \cS$ without any additional requirements on the parameters. Distinguishing three cases, we note here that the present section also provides a proof of \eqref{eq:Z-estimate} for more general parameter families $\bc\in \cC$, $\bs\in \cS$, albeit not quite in complete generality.
  \begin{enumerate}
    \item If \eqref{eq:Z-estimate} holds for $\bs={\mathbf 0}=(0,\dots,0)$ then \eqref{eq:Z-estimate} also holds for general $\bs\in \cS$. This follows from the facts that $B_{\bc,{\mathbf 0}}$ and $\Bcs$ are isomorphic as algebras \cite[Theorem 7.1]{a-Letzter03}, \cite[Theorem 7.1]{a-Kolb14}, and that the isomorphism preserves lowest weight components with respect to the left adjoint action of $\Uq$. 
    \item If $\tau(j)=j$ for some $j\in I\setminus X$ and $\lambda\in \field(q^{1/d})$ then there exists a Hopf algebra automorphism $\phi:\Uc\rightarrow \Uc$ such that $\phi(\Bcs)=B_{\bc',\bs}$ where $\bc'=(c_i')_{i\in I \setminus X}\in \cC$ is determined by 
    \begin{align*}
      c_i'=\begin{cases}
              \lambda^2 c_i & \mbox{if $i=j$},\\
              c_i & \mbox{else,}
           \end{cases}
    \end{align*} 
    see \cite[Section 9]{a-Kolb14}.
    This means that the methods of this section prove that \eqref{eq:Z-estimate} holds for arbitrary values of $c_j\in \field(q^{1/d})$ with $\bc\in \cC$ up to adjoining square roots to $\field(q^{1/d})$.
    \item If $\tau(j)\neq j$ for some $j\in I\setminus X$ then applying a Hopf algebra automorphism of $\Uc$ to $\Bcs$ only multiplies $c_j$ and $c_{\tau(j)}$ by the same factor or interchanges them. This means that if $(\alpha_j,\Theta(\alpha_j))\neq 0$ then the methods of this section do not prove \eqref{eq:Z-estimate} for general $c_j, c_{\tau(j)}$ with $\bc\in \cC$. 
  \end{enumerate}
\end{rema}
\subsection{A twisted quantum trace}\label{sec:tqt}
Assume that $\sigma \neq \id$. In this case $Z(\Bcs)_\Z$ is also isomorphic to a $\Z$-subalgebra of $\cA$ which can be explicitly described. Indeed, consider the twisted left adjoint action of $\Uc$ on itself defined by
\begin{align*}
  \ad_l^{\sigma}(u)(x)=u_{(1)}x S(\sigma_U(u_{(2)})) \qquad \mbox{for all $u,x\in \Uc$.}
\end{align*} 
Let $\ad_l^{\sigma,\ast}$ denote the induced right action of $\Uc$ on $\cA$. More explicitly, one has
$\big(\ad_l^{\sigma,\ast}(u)(a)\big)(x)=a\big(u_{(1)}x S(\sigma_U(u_{(2)}))\big)$ for all $u,x\in \Uc$, $a\in \cA$. Define 
\begin{align*} 
  \cA^{\inv,\sigma} = \{a\in \cA\,|\,\ad_l^{\sigma,\ast}(u)(a) = \vep(u)a \mbox{ for all $u\in \check{U}$}\}.
\end{align*}  
One checks that $ \cA^{\inv,\sigma}$ is a subalgebra of $\cA$. For any $a\in \cA_\sigma^\inv$ and $u\in \Uc$ the restriction $a(\,\cdot\,\sigma)|_\Uc$ satisfies the relation
\begin{align*}
  \ad_l^{\sigma,\ast}(u)\big(a(\,\cdot\,\sigma)|_\Uc\big)
  &=a(u_{(1)}\,\cdot\,S(\sigma_U(u_{(2)}))\sigma)|_\Uc\\
  &=a(u_{(1)}\,\cdot\,\sigma S(u_{(2)}))|_\Uc\\
  &=\vep(u)a(\,\cdot\,\sigma)|_\Uc.
\end{align*}  
Hence there is a well defined algebra homomorphism
\begin{align*}
  \pi_\sigma: \cA^\inv_\sigma \rightarrow \cA^{\inv,\sigma}, \qquad a\mapsto a(\,\cdot\,\sigma)|_\Uc.
\end{align*}
For any $\lambda,\mu\in P^+$ with $\sigma(\lambda)=\lambda$ and $\sigma(\mu)\neq \mu$ one has
\begin{align}\label{eq:pis}
  \pi_\sigma(\tr_{V(\lambda)_+,\sigma,q})=-\pi_\sigma(\tr_{V(\lambda)_-,\sigma,q}), \qquad
  \pi_\sigma(\tr_{W(\mu),\sigma,q})=0.
\end{align}
For any $\lambda\in P^+$ with $\sigma(\lambda)=\lambda$ define a twisted quantum trace $\tr^\sigma_{V(\lambda),q}\in \cA$ by
\begin{align*}
   \tr^\sigma_{V(\lambda),q}=\pi_\sigma(\tr_{V(\lambda)_+,\sigma,q}).
\end{align*}
By construction, $\tr^\sigma_{V(\lambda),q}$ is a non-zero element in the  Peter-Weyl block $C^\lambda$ corresponding to $\lambda$.
\begin{lem}\label{lem:tr-basis}
  The set $\{\tr_{V(\lambda),q}^\sigma\,|\,\lambda\in P^+, \, \sigma(\lambda)=\lambda\}$ is a basis of $\cA^{\inv,\sigma}$.
\end{lem}
\begin{proof}
  The Peter-Weyl block $C^\lambda \cong V(\lambda)^\ast \ot V(\lambda)$ is invariant under the right action 
  $\ad_l^{\sigma,\ast}$. It contains a one-dimensional trivial submodule if and only if $V(\lambda)\cong V(\sigma(\lambda))$ which is equivalent to $\sigma(\lambda)=\lambda$. By definition, the twisted quantum trace $\tr^\sigma_{V(\lambda),q}$ spans such a trivial submodule.
\end{proof}
Lemma \ref{lem:tr-basis}, the formulas in \eqref{eq:pis} and the second part of Corollary \ref{cor:ZBZ} now allow us to identify the center $Z(\Bcs)$ with the ring of twisted invariants $\cA^{\inv,\sigma}$.
\begin{cor}\label{cor:Asigma-iso}
  There is a uniquely determined algebra isomorphism $\varphi:\cA^{\inv,\sigma}\rightarrow Z(\Bcs)$ such that the following diagram commutes:
  $$
    \xymatrix{\cA^\inv_\sigma \ar[r]^{\ltil_{(1\ot v)\Kmat}} \ar[d]^{\pi_\sigma}& Z(\Bcs)\\
              \cA^{\inv,\sigma} \ar[ur]^\varphi& } 
    $$
    The isomorphism $\varphi$ is given on the basis from Lemma \ref{lem:tr-basis} by $\varphi(\tr^{\sigma}_{V(\lambda),q})= b_\lambda$
    for all $\lambda\in P^+$ with $\sigma(\lambda)=\lambda$.
\end{cor}  

\providecommand{\bysame}{\leavevmode\hbox to3em{\hrulefill}\thinspace}
\providecommand{\MR}{\relax\ifhmode\unskip\space\fi MR }
\providecommand{\MRhref}[2]{%
  \href{http://www.ams.org/mathscinet-getitem?mr=#1}{#2}
}
\providecommand{\href}[2]{#2}

\end{document}